\documentclass[12pt]{article}

\usepackage{amssymb, amsthm, amsmath}

\usepackage{parskip}
\makeatletter 
\def\thm@space@setup{%
 \thm@preskip=\parskip \thm@postskip=0pt
}
\def\th@remark{%
  \thm@headfont{\itshape}%
  \normalfont 
  \thm@preskip\parskip \thm@postskip=0pt
}
\makeatother

\usepackage[]{hyperref}
\usepackage[initials,nobysame,msc-links]{amsrefs}
\DefineSimpleKey{bib}{how}
\renewcommand{\PrintDOI}[1]{%
  \href{http://dx.doi.org/#1}{{\tt DOI:#1}}%
}
\makeatletter
\renewcommand{\PrintDatePV}[1]{%
    \let\@tempa\PrintDate
    \@tempa{#1}%
}
\makeatother
\renewcommand{\eprint}[1]{#1}
\BibSpec{book}{%
    +{}  {\PrintPrimary}                {transition}
    +{.} { \PrintDate}                  {date}
    +{.} { \emph}                     {title}
    +{.} { }                            {part}
    +{:} { \emph}                     {subtitle}
    +{,} { \PrintEdition}               {edition}
    +{}  { \PrintEditorsB}              {editor}
    +{,} { \PrintTranslatorsC}          {translator}
    +{,} { \PrintContributions}         {contribution}
    +{,} { }                            {series}
    +{,} { \voltext}                    {volume}
    +{,} { }                            {publisher}
    +{,} { }                            {organization}
    +{,} { }                            {address}
    +{,} { }                            {status}
    +{,} { \PrintDOI}                   {doi}
    +{,} { \PrintISBNs}                 {isbn}
    +{}  { \parenthesize}               {language}
    +{}  { \PrintTranslation}           {translation}
    +{;} { \PrintReprint}               {reprint}
    +{.} { }                            {note}
    +{.} {}                             {transition}
    +{}  {\SentenceSpace \PrintReviews} {review}
}
\BibSpec{article}{%
    +{}  {\PrintAuthors}                {author}
    +{,} { \emph}                     {title}
    +{.} { }                            {part}
    +{:} { \emph}                     {subtitle}
    +{,} { \PrintContributions}         {contribution}
    +{.} { \PrintPartials}              {partial}
    +{,} { }                            {journal}
    +{}  { \textbf}                     {volume}
    +{}  { \PrintDatePV}                {date}
    +{,} { \issuetext}                  {number}
    +{,} { \eprintpages}                {pages}
    +{,} { }                            {status}
    +{,} { \PrintDOI}                   {doi}
    +{,} { \eprint}        {eprint}
    +{}  { \parenthesize}               {language}
    +{}  { \PrintTranslation}           {translation}
    +{;} { \PrintReprint}               {reprint}
    +{.} { }                            {note}
    +{.} {}                             {transition}
    +{}  {\SentenceSpace \PrintReviews} {review}
}
\BibSpec{collection.article}{%
    +{}  {\PrintAuthors}                {author}
    +{,} { \emph}                     {title}
    +{.} { }                            {part}
    +{:} { \emph}                     {subtitle}
    +{,} { \PrintContributions}         {contribution}
    +{,} { \PrintConference}            {conference}
    +{}  {\PrintBook}                   {book}
    +{,} { }                            {booktitle}
    +{,} { \PrintDateB}                 {date}
    +{,} { pp.~}                        {pages}
    +{,} { }                            {publisher}
    +{,} { }                            {organization}
    +{,} { }                            {address}
    +{,} { }                            {status}
    +{,} { \PrintDOI}                   {doi}
    +{,} { \eprint}        {eprint}
    +{}  { \parenthesize}               {language}
    +{}  { \PrintTranslation}           {translation}
    +{;} { \PrintReprint}               {reprint}
    +{.} { }                            {note}
    +{.} {}                             {transition}
    +{}  {\SentenceSpace \PrintReviews} {review}
}
\BibSpec{misc}{%
  +{}{\PrintAuthors}  {author}
  +{,}{ \emph}      {title}
  +{.}{ }             {how}
  +{}{ \parenthesize} {date}
  +{,} { available at \eprint}        {eprint}
  +{,}{ available at \url}{url}
  +{,}{ }             {note}
  +{.}{}              {transition}
}

\oddsidemargin -8mm\textwidth 16truecm
\topmargin -9mm \textheight 22truecm

\newcommand{\Z}{\mathbb{Z}}
\newcommand{\N}{\mathbb{N}}
\newcommand{\R}{\mathbb{R}}
\newcommand{\C}{\mathbb{C}}
\newcommand{\F}{\mathbb{F}}
\newcommand{\D}{\mathcal{D}}
\newcommand{\T}{\mathbb{T}}
\newcommand{\FO}{\mathbb{F}\mathrm{O}}
\newcommand{\FU}{\mathbb{F}\mathrm{U}}
\newcommand{\U}{\mathrm{U}}
\newcommand{\SU}{\mathrm{SU}}
\newcommand{\SO}{\mathrm{SO}}
\newcommand{\SL}{\mathrm{SL}}
\newcommand{\Up}{\mathrm{U}^+}
\newcommand{\Op}{\mathrm{O}^+}
\newcommand{\Hsp}{\mathcal{H}}
\newcommand{\Ksp}{\mathcal{K}}
\newcommand{\id}{\mathrm{id}}

\newcommand{\cb}{\mathrm{cb}}
\newcommand{\opos}{\mathrm{op}}
\newcommand{\G}{\mathbb{G}}
\newcommand{\qH}{\mathbb{H}}

\newcommand{\qbin}[3]{\left[ \begin{array}{c} #1 \\ #2 \end{array}\right]_{#3}}
\newcommand{\reg}[1]{\mathcal{#1}}
\newcommand{\Ad}{\mathrm{Ad}}

\newcommand{\ccdot}{\,\cdot\,}
\newcommand{\Dd}{\mathcal{D}}
\newcommand{\dDd}{\widehat{\mathcal{D}}}
\newcommand{\Pol}{\mathcal{O}}

\newcommand{\vN}{L}
\newcommand{\qAut}{\mathrm{A}^{\mathrm{aut}}}
\newcommand{\frprd}{\mathbin{*}}
\newcommand{\rfrprd}{\mathbin{*}_r}
\usepackage{wasysym}
\newcommand{\absv}[1]{\left\lvert #1 \right\rvert}
\newcommand{\norm}[1]{\left\lVert #1 \right\rVert}

\DeclareMathOperator{\Tr}{Tr}
\DeclareMathOperator{\Ind}{Ind}

\DeclareMathOperator{\Rep}{Rep}

\DeclareMathOperator{\Irr}{Irr}
\DeclareMathOperator{\sgn}{sgn}
\DeclareMathOperator{\Aut}{Aut}

\DeclareMathOperator{\Sd}{Sd}

\newcommand{\eps}{\varepsilon}
\newcommand{\al}{\alpha}
\newcommand{\be}{\beta}
\newcommand{\ot}{\otimes}
\newcommand{\vphi}{\varphi}
\newcommand{\om}{\omega}
\newcommand{\Inn}{\operatorname{Inn}}

\theoremstyle{plain}
\newtheorem{Theorem}{Theorem}
\newtheorem{Lem}[Theorem]{Lemma}
\newtheorem{Prop}[Theorem]{Proposition}
\newtheorem{Cor}[Theorem]{Corollary}

\theoremstyle{definition}
\newtheorem{Def}[Theorem]{Definition}

\theoremstyle{remark}
\newtheorem{Rem}[Theorem]{Remark}

\title{CCAP for universal discrete quantum groups}

\date{}

\begin{document}

\author{Kenny De Commer\thanks{D\'epartement de math\'ematiques, Universit\'e de Cergy-Pontoise, UMR CNRS 8088, F-95000 Cergy-Pontoise, France, email: {\tt Kenny.De-Commer@u-cergy.fr}}
\and Amaury Freslon\thanks{Universit\'e Paris Diderot, Sorbonne Paris Cit\'e, UMR 7586, 8 place FM/13, 75013, Paris, France, email: {\tt freslon@math.jussieu.fr}}
\and Makoto Yamashita\thanks{Institut for Matematiske Fag, K{\o}benhavns Universitet, Universitetsparken 5, 2100-K{\o}benhavn-\O, Denmark (on leave from Ochanomizu University),
email:
{\tt yamashita.makoto@ocha.ac.jp}}}

\maketitle

\begin{abstract}
\noindent We show that the discrete duals of the free orthogonal quantum groups have the Haagerup property and the completely contractive approximation property. Analogous results
hold for the free unitary quantum groups and the quantum automorphism groups of finite-dimensional C$^*$-algebras. The proof relies on the monoidal equivalence between free
orthogonal quantum groups and $\SU_q(2)$ quantum groups, on the construction of a sufficient supply of bounded central functionals for $\SU_q(2)$ quantum groups, and on the free
product techniques of Ricard and Xu. Our results generalize previous work in the Kac setting due to Brannan on the Haagerup property, and due to the second author on the CCAP.
\end{abstract}

\emph{Keywords}: quantum group, weak amenability, Haagerup property, Drinfel'd double.

AMS 2010 \emph{Mathematics subject classification}: 46L09, 46L65.

\section*{Introduction}

The universal compact quantum groups $\Op_F$ and $\Up_F$, called respectively the \emph{free orthogonal} and \emph{free unitary} quantum groups, were introduced by Van Daele and Wang~\cite{VDae2} as nonclassical compact quantum groups characterized by certain universality property with prescribed actions. Banica~\cite{Ban2} carried out a detailed analysis of the structure of the representation category and C$^*$-algebras associated to these quantum groups. Regarding approximation properties, Vaes and Vergnioux~\cite{Vae1} obtained, from a study of the action of the discrete dual $\FO_F$ on its Martin boundary, the exactness and the Akemann--Ostrand property of the reduced C$^*$-algebra $C_r^*(\FO_F)$. This allowed them to infer the generalized solidity of the von Neumann algebra $\vN(\FO_F)$ generated by $C^*_r(\FO_F)$.

In this paper, we focus on the operator space structure of universal compact quantum groups.  Recall that a C$^*$-algebra $A$ is said to have the \emph{completely contractive approximation property} (CCAP for short, also known as the \emph{complete metric approximation property}, or
being
\emph{weakly amenable with Cowling--Haagerup constant $1$}) if there is a net of completely contractive finite rank operators on $A$ approximating the identity in the topology of
pointwise norm-convergence~\cite{Cow1}. Analogously, if $A$ is a von Neumann algebra, the approximating maps are normal, and the convergence is with respect to the pointwise
$\sigma$-weak convergence, then $A$ is said to have the W$^*$CCAP.

Our attention will mostly be concentrated on the free orthogonal quantum groups $\Op_F$ with $F$ a complex matrix of size $N$ satisfying $F \overline{F} \in \R I_{N}$.  Our main result, Theorem \ref{Thm:AoF-central-approx}, implies that the reduced C$^*$-algebra $C^*_r(\FO_F)$, which we will also denote as $C_r(\Op_F)$, has the CCAP\@. This generalizes previous results in the Kac-type case where $F$ is the identity matrix $I_{N}$. Namely, Brannan~\cite{Bra1} showed that the discrete quantum group $\FO_{N}$ has the Haagerup property and the metric approximation property. Using a precursory idea of this paper, the second author showed the CCAP of $\FO_F$ when $\Op_F$ is monoidally equivalent to a Kac one $\Op_m$~\cite{Fre1}.   The approximation maps for the CCAP at hand are given by \emph{central} multipliers close to completely positive maps. Such operators behave well under monoidal equivalence, passing to discrete quantum subgroups and taking free products (the latter by adapting the technique of Ricard and Xu~\cite{Ric1}). Combining this with results due to Banica~\cite{Ban6}, Wang, and Brannan, this allows us to prove the CCAP for \emph{all} $\FO_F$, all $\FU_F$, as well as the duals of all free automorphism groups of finite-dimensional C$^*$-algebras with fixed states. Also the W$^*$CCAP for the associated von Neumann algebras follows.

The CCAP for $\FO_F$ is interesting from several viewpoints. From the viewpoint of quantum group theory, one is naturally led to the comparison of $\FO_F$ with the free groups.  Ever since the seminal work of Haagerup~\cite{Haa1}, it has been found out that the free groups $\F_n$ with $n\geq 2$ enjoy many elaborate approximation properties although they fail to be amenable, and it is natural to expect similar properties for $\FO_F$.  However, the lack of unimodularity brings in several obstacles if one tries to apply the established methods developed for $\F_n$. From the viewpoint of deformation rigidity theory,
W$^*$CCAP and its variants are exploited to show the sparsity of Cartan subalgebras in the breakthrough papers by Ozawa and
Popa~\cite{Oza1}, and by Popa and Vaes~\cite{Pop1}.  Regarding $\FO_F$, recent work of Isono~\cite{Iso1} on a strengthened Akemann--Ostrand property established that the von Neumann
algebra $\vN(\FO_F)$ has no Cartan subalgebra provided it has the W$^*$CCAP and is not injective.

Our proof of the CCAP uses a holomorphicity argument on the Banach space of completely bounded multipliers. Roughly speaking,
we proceed by finding a family of complex numbers $(b_{d}(z))_{d\in\N}$ for $z \in \D = \{z\in \C\mid -1<\Re(z)<1\}$ satisfying the following conditions.
\begin{enumerate}
\item \label{it:cp-mult-condi}
Denote by $p_{d}$ the projection of $C_r(\Op_F)$ onto the isotypic subspace of the irreducible representation with spin $d/2$. We require that for each $z\in \D$, there exists a completely bounded map $\Psi_z$ on $C_r(\Op_F)$ such that $\Psi_z = \sum_{d} b_{d}(z) p_{d}$ as a pointwise norm-limit.
\item \label{it:approx-id-condi}
The map $z \mapsto \Psi_{z}$ is holomorphic in $z$ as a map into the Banach space of completely bounded maps.
\item
The operator $\Psi_t$ is ucp (unital completely positive) for $t \in (-1, 1)$, and converges to the identity as $t \rightarrow 1$ in the topology of pointwise norm-convergence.
\item \label{it:summability-condi}
There exists $0<C<1$ such that $\sum_d |b_d(t)|\norm{p_d}_\cb<\infty$ for all $0<t<C$.
\end{enumerate}
Once one has $(b_{d}(z))_{d, z}$ as above,
normalized truncations of the operators $\Psi_{t}$ can approximate $\Psi_t$ when $t$ is in the range $0<t<C$. Then, the analyticity implies that \emph{any}
$\Psi_t$ with $-1<t<1$ can be approximated by completely contractive finite rank operators.  In the context of discrete groups, the holomorphicity of completely bounded multipliers plays a central role in the work of Pytlik and Szwarc~\cite{Pyt1}, but it was Ozawa who conceived what we need, namely, that the holomorphicity could be an essential substitute for the Bozejko--Picardello type inequality, in his proof of weak amenability of hyperbolic groups (see~\cite{BrO1}*{second proof of Corollary~12.3.5}).

According to \cite{BDV1}, $\Op_F$ is monoidally equivalent to Woronowicz's compact quantum group $\SU_{q}(2)$ \cite{Wor3} for an appropriate choice of $q$. A simple argument based on the linking algebra of such an
equivalence shows that the multipliers of the form $\Psi_{z}$ can be transferred between the reduced C$^*$-algebras of monoidally equivalent quantum groups. This enables us to restrict the problem of finding the $b_d(z)$ to the case of $\SU_{q}(2)$.

In the Kac-type case, questions of the above sort may be tackled from a classical viewpoint. For example, central multipliers can be obtained by averaging arbitrary
multipliers~\cite{Kra1}. However, in the non-Kac-type case, the averaging affects the growth condition in a serious way if we try to retain the positivity~\cite{Cip1}. To find a proper $b_d(t)$ in the case of general $q$, we will use a one parameter family of representations of the Drinfel'd double of $\SU_{q}(2)$ given by Voigt in his study of the
Baum--Connes conjecture for $\FO_F$~\cite{Voi1}. We should also stress that our construction utilizes a purely quantum phenomenon, which degenerates in the classical case of
$\SU(2)$.

The final part of this paper is devoted to a short study of the relation between the central states and the Drinfel'd double construction, aiming for a more conceptual understanding
of the above mentioned quantum phenomenon.  It turns out that the complete positivity of our multipliers is related to the nonclassical spectrum of characters of $\SU_q(2)$ in a
slightly disguised subalgebra of the Drinfel'd double.  In fact, even though $\SU_q(2)$ is coamenable, the double is no longer amenable, and this allows us to ultimately overcome the
problem of the characters of the free quantum groups having `too small' norm in the reduced algebra, as observed by Banica~\cite{Ban6}.

In the appendix by S.~Vaes, it is shown that all $L(\FU_F)$ are full factors.  As remarked above, this allows one to infer that the $L(\FU_F)$ are factors without Cartan subalgebra, for arbitrary invertible matrices $F$.

\subsection*{Acknowledgements}

M.Y. is supported by Danish National Research Foundation through the Centre for Symmetry and Deformation (DNRF92), and by JSPS KAKENHI Grant Number 25800058.  Part of this work was
done while K.D.C. and M.Y. were participating in the Focus program on Non-Commutative Geometry and Quantum groups, Fields institute, 3--28 June 2013. We would like to thank the
organizers and the staff for their hospitality. We also thank Y. Arano, Y. Isono, N. Ozawa, A. Skalski, and R. Vergnioux for their useful comments. We especially thank S. Vaes for including an unpublished note of his as an appendix to our paper.

\section{Preliminaries}\label{sec:prelim}

\subsection{Compact quantum groups}

Let $\G$ be a \emph{compact quantum group} \cites{Wor1,VDae1}. That is, we are given a unital Hopf $*$-algebra $(\Pol(\G),\Delta)$, together with a functional $\varphi$ satisfying
the invariance property
\[
(\id\otimes \varphi)\Delta(x) = \varphi(x)1 = (\varphi\otimes \id)\Delta(x),\quad x\in \Pol(\G)
\]
and the \emph{state} property: $\varphi(1) = 1$ and $\varphi(x^{*}x)\geqslant 0$ for all $x\in \Pol(\G)$. We denote the antipode of $\Pol(\G)$ by $S$. As usual, we use the Sweedler
notation $\Delta(x) = x_{(1)}\otimes x_{(2)}$ to express the coproduct in a convenient way.

The associated reduced C$^*$-algebra and von Neumann algebra are denoted respectively by $C_r(\G)$ and $\vN^\infty(\G)$. We also regard these operator algebras as the convolution
algebras of the dual \emph{discrete quantum group} $\widehat{\G}$, and hence also use the equivalent notations $C^*_r(\widehat{\G})$ and $\vN(\widehat{\G})$ interchangeably.

When $\G$ is a compact quantum group as above, a discrete quantum subgroup $\widehat{\qH}$ of $\widehat{\G}$ is given by a Hopf $*$-subalgebra $\Pol(\qH) \subset \Pol(\G)$.

The \emph{Woronowicz characters}~\cite{Wor1} are a distinguished family of unital algebra homomorphisms $(f_{z})_{z \in \C}$ from $\Pol(\G)$ into $\C$ satisfying the rules
\[
f_{z}(x_{(1)})f_{w}(x_{(2)}) = f_{z+w}(x),\quad f_{z}(x^{*}) = \overline{f_{-\bar{z}}(x)},\qquad (x,y\in \Pol(\G), w,z\in \C).
\]
The antipode squared $S^2$ then satisfies
\[
S^2(x) = f_{1}(x_{(1)})x_{(2)}f_{-1}(x_{(3)}).
\]

The structure of $\Pol(\G)$ can be captured by components of irreducible unitary corepresentations. That is, there is a basis $(u^{(\pi)}_{i j})_{\pi, i, j}$ of $\Pol(\G)$ satisfying
\begin{align*}
\sum_{k} u^{(\pi)}_{i k} u^{(\pi) *}_{j k} &= \delta_{i j} = \sum_{k} u^{(\pi) *}_{k i} u^{(\pi)}_{k j},&
\Delta\bigl(u^{(\pi)}_{i j}\bigr) &= \sum_k u^{(\pi)}_{i k} \otimes u^{(\pi)}_{k j},&
S\bigl(u^{(\pi)}_{i j}\bigr) &= u^{(\pi) *}_{j i}.
\end{align*}
One may moreover choose the $u_{ij}^{(\pi)}$ so that $f_z$ satisfies $f_{z}(u^{(\pi)}_{i j}) = \delta_{i j} Q_{\pi, i}^{z}$ for some positive real numbers $Q_{\pi, i}$. In that
case, one has
\begin{align}\label{eq:inn-prod}
\varphi\bigl(u^{(\pi')}_{i j} u^{(\pi) *}_{k l}\bigr) &= \frac{\delta_{\pi,\pi'}\delta_{j l} \delta_{k i} Q_{\pi, j}}{\dim_{q}(\pi)}, &
\varphi\bigl(u^{(\pi) *}_{i j} u^{(\pi')}_{k l}\bigr) &= \frac{\delta_{\pi,\pi'}\delta_{i
k} \delta_{j l} Q_{\pi, i}^{-1}}{\dim_{q}(\pi)},
\end{align}
where $\dim_q(\pi)= \sum_i Q_{\pi, i} = \sum_i Q_{\pi, i}^{-1}$.

When $\pi$ is an irreducible representation of $\G$, the associated \emph{isotypic projection} $p_{\pi}$ is defined by $p_{\pi}(u_{ij}^{(\pi')}) = \delta_{\pi,\pi'}u_{ij}^{(\pi)}$.
It extends to a completely bounded map on $C_r(\G)$, since one can write
\begin{equation}\label{eq:proj-pi-as-fourier-transform}
p_{\pi}(x) = (\id\otimes \varphi)(\Delta(x)(1\otimes c_{\pi}^*)),
\end{equation}
where  $c_{\pi}= \dim_q(\pi)\sum_{i}Q_{\pi, i}^{-1}u_{ii}^{(\pi)}$.

\subsection{Monoidal equivalence}

Two compact quantum groups $\G_{1}$ and $\G_{2}$ are said to be \emph{monoidally equivalent} if their representation categories are equivalent as abstract tensor C$^*$-categories.
This implies the existence of $*$-algebras $\Pol(\G_{r}, \G_{s})$ for $r, s \in \{1, 2\}$ and injective $*$-homomorphisms
\begin{equation}\label{eq:monoidal-equiv-linking-homs}
\Delta_{rs}^{t}\colon \Pol(\G_{r}, \G_{s}) \rightarrow \Pol(\G_{r}, \G_{t}) \otimes \Pol(\G_{t}, \G_{s})
\end{equation}
satisfying obvious coassociativity conditions, with $(\Pol(\G_{r}, \G_{r}), \Delta_{rr}^{r}) = (\Pol(\G_{r}), \Delta_{r})$ and the coactions $\Delta_{r s}^{r}$ and
$\Delta_{rs}^{s}$ being ergodic~\cite{BDV1}. Each of these $*$-algebras then contains an isotypic subspace $\Pol(\G_{r}, \G_{s})_{\pi}$ associated with each irreducible representation $\pi$ of
$\Rep(\G_{1}) \cong \Rep(\G_{2})$.

The ergodicity of the coactions implies that the $*$-algebras $\Pol(\G_{r}, \G_{s})$ have distinguished states given by averaging with the Haar state of $\G_{r}$ (or, equivalently,
of
$\G_{s}$). The GNS-construction produces reduced C$^*$-algebras $C_r(\G_{r}, \G_{s})$ and \emph{injective}
$*$-homomorphisms extending~\eqref{eq:monoidal-equiv-linking-homs}, where the codomain is understood to be the minimal tensor product of the relevant C$^*$-algebras.

\subsection{Quantum \texorpdfstring{$\SU(2)$}{SU(2)} groups}\label{SecQuantumSU}

Let $q$ be a nonzero real number with $|q|\leq 1$.
In this paper we use the following presentation of $\SU_q(2)$. The Hopf $^*$-algebra $\Pol(\SU_{q}(2))$ is generated by two elements $\alpha$ and $\gamma$ with the sole requirement that the matrix
\[
U_{1/2} = \begin{pmatrix} \alpha & -q\gamma^{*}\\ \gamma & \alpha^{*}\end{pmatrix}
\]
is a unitary corepresentation. The whole family of irreducible corepresentations of $\Pol(\SU_{q}(2))$ is then parametrized by half-integer weights $d / 2$ for $d \in \N$. Concretely, the one labeled by $d / 2$ is represented on
a Hilbert space $H_{d/2}$ of dimension $d + 1$, whose basis is labeled by the half-integers $-d/2, - d/2 + 1, \ldots, d/2$. We write the associated unitary matrix as $U_{d / 2} =
[u^{(d / 2)}_{i j}] \in B(H_{d / 2}) \otimes \Pol(\SU_q(2))$. With respect to this indexing, the operator $Q_{d/2}$ can be diagonalized as $Q_{d / 2, i} = \absv{q}^{2 i}$.

Let $(\mu_{d})_{d \in \N}$ denote \emph{the dilated Chebyshev polynomials of the second kind}, determined by the recurrence relation $\mu_{0}(x) = 1$, $\mu_{1}(x) = x$ and $x
\mu_k(x) = \mu_{k-1}(x)+\mu_{k+1}(x)$ for $k\geq1$. Then, the quantum dimension of $U_{d/2}$ is given by
\begin{equation}\label{eq:SUq2-q-dim-formula}
\dim_q(U_{d/2}) = \sum_{j = - d / 2}^{d / 2} \absv{q}^{2 j} = \mu_{d}(\absv{q+q^{-1}})
\end{equation}
for $d \in \N$, while the classical dimension satisfies $\dim(U_{d/2}) = \mu_d(2)$~\cite{Wor3}.

As $SU_q(2)$ is co-amenable, its associated reduced and universal C$^*$-algebra coincide, and will be denoted by $C(SU_q(2))$. It admits a faithful $*$-representation $\rho_q$ on $\ell^2(\N)\otimes \ell^2(\Z)$, defined by
\begin{align*}
\rho_q(\alpha)e_n\otimes e_k &= \sqrt{1-q^{2n}} e_{n-1}\otimes e_k,&
\rho_q(\gamma) e_n\otimes e_k &= q^n e_n\otimes e_{k-1}.
\end{align*}

The C$^*$-subalgebra of $C(\SU_q(2))$ generated by $\alpha$ is isomorphic to the Toeplitz algebra. It can be characterized as the universal unital C$^*$-algebra with a
single generator $\alpha$ satisfying the commutation relation $\alpha\alpha^*-q^2\alpha^*\alpha  = 1-q^2 $ (see \cite{Haj1}*{Section 3}). By considering only the first leg of
$\rho_q$, we get a faithful representation of $C^*(\alpha)$ on $\ell^2(\N)$, denoted by $\rho^0_q$.

\subsection{Free orthogonal quantum groups}

Let $N \geqslant 2$ be an integer, and let $F$ be an invertible complex matrix of size $N$. Let $\Pol(\Op_F)$ be the universal $*$-algebra generated by $N^{2}$ elements $(u_{i
j})_{1\leqslant i, j \leqslant N}$ such that the matrix $U = [u_{i j}]$ is unitary and $U = F \overline{U} F^{-1}$. Here, $\overline{U}$ denotes the component-wise adjoint matrix
$[{u_{i j}}^*]$ of $U$. Then $\Pol(\Op_F)$ has the structure of a Hopf $*$-algebra with the coproduct defined by
\[
\Delta(u_{i j}) = \sum_{k} u_{i k}\otimes u_{k j}.
\]
This determines a compact quantum group called the \emph{free orthogonal quantum group} $\Op_F$, introduced by Wang and Van Daele \cites{Wan1, VDae2}. The change of parameters $F \rightarrow
\lambda V F V^t$ for $\lambda \in \C^\times$ and $V \in \U(N)$ gives a naturally isomorphic quantum group. We will interchangeably denote the associated reduced C$^*$-algebra by
$C_r(\Op_F)$ and  $C_r^*(\FO_F)$, where $\FO_F$ stands for the discrete quantum group dual of $\Op_F$.

When $F$ satisfies $F \overline{F} = \pm I_N$, the compact quantum group $\Op_F$ is monoidally equivalent to $\SU_q(2)$ for the $q$ satisfying $q + q^{-1} =
\mp\Tr(F^{*}F)$~\cite{BDV1}. In particular, the irreducible representations of $\Op_F$ are also labeled by half-integers, and the associated quantum dimensions are the same as in the
$\SU_q(2)$-case.

\section{Central linear functionals}\label{SecNot}

Let $\G$ be a compact quantum group. The space $\Pol(\G)^{*}$ of linear functionals on $\Pol(\G)$ has the structure of a unital associative $*$-algebra defined by
\begin{align}\label{eq:dual-alg-structure}
(\phi \psi) (x) &= (\psi \otimes \phi)\circ\Delta (x),&
\omega^{*}(x) &= \overline{\omega(S(x^{*}))}.
\end{align}
Note that we take the opposite of the usual convolution product, to have nicer formulas in Section \ref{SecDrin}.

\begin{Def}
A functional on $\Pol(\G)$ is called \emph{central} if it commutes with every element in $\Pol(\G)^{*}$.
\end{Def}

Clearly $\omega$ is central if and only if one has
\[
\omega(x_{(1)})x_{(2)} = \omega(x_{(2)})x_{(1)},\quad \forall x\in \Pol(\G).
\]

It is easily seen that there is a one-to-one correspondence between central functionals and functions $\pi \rightarrow \omega_{\pi}\in \C$ on the set of equivalence classes of
irreducible representations of $\G$. Namely, to any central functional $\omega$ one associates the numbers
\begin{equation}\label{EqId}
\omega_{\pi} = \frac{1}{\dim(\pi)}\sum_i \omega \bigl(u_{ii}^{(\pi)}\bigr) \qquad \pi \in \Irr(\G),
\end{equation}
while to any $\pi \rightarrow \omega_{\pi}$ one associates the central functional $u_{ij}^{(\pi)}\mapsto \delta_{ij} \omega_{\pi}$. In the following, we will use this identification
without further comment.

For $\omega$ central, the slice map $T_{\omega} = (\omega \otimes \imath)\circ\Delta$ on $\Pol(\G)$ acts by multiplication by the scalar $\omega_{\pi}$ on the $\pi$-isotypic
component. Thus, $T_{\omega}(x)$ can be written as $\sum_{\pi} \omega_{\pi} p_{\pi}(x)$, which is a finite sum for any $x \in \Pol(\G)$. Conversely, any operator of the form $\sum_{\pi} \omega_{\pi} p_{\pi}$ is the slice map of the
central linear functional associated with $\pi \rightarrow \omega_{\pi}$. We call $T_{\omega}$ the \emph{central multiplier} associated with $\omega$.

We are interested in central multipliers which extend to completely bounded maps on $C_r(\G)$. These central multipliers $T_{\omega}$ enjoy particularly nice properties, analogous to
the Herz--Schur multiplier operators for locally compact groups. Such operators already appeared in several works~\cites{Bra1,Daw1}, but we only need the following simple principle.

Assume that $T_{\omega}$ is completely bounded with respect to the reduced C$^*$-norm on $\Pol(\G)$. The reduced operator space norm on $\Pol(\G)$ is a restriction of the operator space norm on $\vN^\infty(\G)$,
which is equal to the norm as the linear operator space dual of $\vN^\infty(\G)_*$. Using that $\Pol(\G)$ is $\sigma$-weakly dense in $\vN^{\infty}(\G)$, and using that functionals of the form $x \mapsto
\varphi(yx)$ for $y\in \Pol(\G)$ are norm-dense in $\vN^{\infty}(\G)_*$,
one infers that $\theta\circ T_{\omega}$ extends to a normal functional on $\vN^{\infty}(\G)$ for each $\theta\in \vN^{\infty}(\G)_*$, and that the formula
\[
M_{\omega}\colon \theta \mapsto \theta\circ T_{\omega}
\]
defines then  a completely bounded transformation on $\vN^\infty(\G)_*$ with the cb-norm bounded by that of $T_\omega$.
Taking the adjoint map of $M_\omega$, we obtain a completely bounded normal map on $\vN^\infty(\G)$, which extends $T_\omega$ with the same cb-norm.

When $\widehat{\qH}$ is a discrete quantum subgroup of $\widehat{\G}$, the central functionals on $\Pol(\G)$ restrict to central functionals on $\Pol(\qH)$, with possibly better cb-norm bounds.

When $\G_{1}$ and $\G_{2}$ are monoidally equivalent, there is a canonical one-to-one correspondence between their irreducible representation classes. This determines a canonical bijective correspondence between central linear functionals by means of the identification \eqref{EqId}. In
terms
of multipliers, this means that if $\omega$ is a central linear functional on $\Pol(\G_{1})$, the operator $T_{\omega}$ can be seen as acting on $\Pol(\G_{2})$ by the same formula $\sum_{\pi} \omega_{\pi} p_{\pi}$ . We still denote by
$T_{\omega}$ this new operator on $\Pol(\G_2)$.

\begin{Prop}[\cite{Fre1}*{Proposition 6.3}]\label{PropMon}
With the above notations, the completely bounded norms of $T_{\omega}$ with respect to the reduced C$^*$-algebras $C_r(\G_{1})$ and $C_r(\G_{2})$ coincide. In particular, $T_{\omega}$
is unital and completely positive on $C_r(\G_{1})$ if and only if it is unital and completely positive on $C_r(\G_{2})$.
\end{Prop}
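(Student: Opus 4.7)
The plan is to pass through the linking algebra $\Pol(\G_1, \G_2)$. Let $\tilde{T}_\omega$ denote the linear endomorphism of $\Pol(\G_1, \G_2)$ which acts as the scalar $\omega_\pi$ on each isotypic subspace $\Pol(\G_1, \G_2)_\pi$. Let $T_\omega^{(1)}$ (which is $T_\omega$ itself) and $T_\omega^{(2)}$ denote the central multipliers on $\Pol(\G_1)$ and $\Pol(\G_2)$ respectively, obtained from the shared data $(\omega_\pi)_\pi$ under the bijection of irreducibles coming from the monoidal equivalence.

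First I would verify three intertwining identities by one-line computations on matrix coefficients of irreducible corepresentations:
\begin{align*}
\Delta_{12}^1 \circ \tilde{T}_\omega &= (T_\omega^{(1)} \otimes \id) \circ \Delta_{12}^1, \\
\Delta_{11}^2 \circ T_\omega^{(1)} &= (\tilde{T}_\omega \otimes \id) \circ \Delta_{11}^2, \\
\Delta_{22}^1 \circ T_\omega^{(2)} &= (\id \otimes \tilde{T}_\omega) \circ \Delta_{22}^1.
\end{align*}
In each case, applying the coaction to a $\pi$-isotypic element produces a tensor both of whose legs lie in the $\pi$-isotypic pieces of the relevant linking algebras, so inserting any of the three operators has the sole effect of multiplying by $\omega_\pi$.

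Next, I would exploit that every $\Delta_{rs}^{t}$ is an injective $*$-homomorphism of C$^*$-algebras, hence a complete isometry onto its image; tensoring with the identity therefore also preserves the cb-norm. Reading the first identity gives $\|\tilde{T}_\omega\|_{\cb, C_r(\G_1, \G_2)} \leq \|T_\omega^{(1)}\|_{\cb, C_r(\G_1)}$, while reading the second in the reverse direction gives $\|T_\omega^{(1)}\|_{\cb} \leq \|\tilde{T}_\omega\|_{\cb}$, so the two are equal. An analogous pair, using $\Delta_{12}^2$ (as a right coaction by $\G_2$) in place of $\Delta_{12}^1$ and $\Delta_{22}^1$ in place of $\Delta_{11}^2$, delivers $\|T_\omega^{(2)}\|_{\cb} = \|\tilde{T}_\omega\|_{\cb}$. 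The three cb-norms agree, which gives the first assertion.

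The ucp statement follows in the same spirit. Unitality of $T_\omega^{(1)}$ is equivalent to $\omega_\pi = 1$ for the trivial $\pi$, a categorical datum shared by $T_\omega^{(2)}$. For complete positivity, suppose $T_\omega^{(1)}$ is cp; then $T_\omega^{(1)} \otimes \id$ is cp, so the first intertwining identity realises $\Delta_{12}^1 \circ \tilde{T}_\omega$ as a composition of cp maps, and since $\Delta_{12}^1$ is an injective $*$-homomorphism it reflects positivity in all matrix amplifications, forcing $\tilde{T}_\omega$ to be cp. Plugging this into the third identity shows $\Delta_{22}^1 \circ T_\omega^{(2)}$ is cp, and injectivity of $\Delta_{22}^1$ in turn forces $T_\omega^{(2)}$ to be cp. The reverse implication is symmetric. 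The main point to be careful about is bookkeeping: one needs intertwining relations going both directions in order to convert the automatic one-sided cb-inequality coming from an isometric embedding into an equality, and this is precisely why the coactions $\Delta_{11}^2$ and $\Delta_{22}^1$, which embed $\Pol(\G_r)$ into a tensor product of two linking algebras, are indispensable.
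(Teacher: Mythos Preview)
Your argument is correct and is precisely the linking-algebra method that the cited reference \cite{Fre1}*{Proposition~6.3} uses; the present paper does not supply an independent proof but simply invokes that result. The key points---that the coactions $\Delta_{rs}^{t}$ extend to injective (hence completely isometric) $*$-homomorphisms at the reduced level, and that your intertwining identities allow one to sandwich each cb-norm between the other two---are exactly the ingredients of the original proof.
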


\begin{Def}\label{DefACPAP}
A discrete quantum group $\widehat{\G}$ is said to have the \emph{central almost completely positive approximation property} (central ACPAP for short) if there is a net of central
functionals $(\psi_t)_{t\in I}$ on $\Pol(\G)$ such that
\begin{enumerate}
\item\label{eq:ACPAP-cpness}
For any $t$, the operator $\Psi_t = T_{\psi_t}$ on $\Pol(\G)$ induces a unital completely positive (ucp) map on $C_r(\G)$.
\item\label{eq:ACPAP-finapprox}
For any $t$, the operator $\Psi_t$ is approximated in the cb-norm by finitely supported central multipliers.
\item\label{eq:ACPAP-approxid}
For any $\pi \in \Irr(\G)$, $\lim_{t\in I}(\psi_t)_\pi =1$.
\end{enumerate}
\end{Def}

Proposition~\ref{PropMon} implies that this property is preserved under monoidal equivalence.

\begin{Rem}
When $\G$ is of Kac-type, the ACPAP condition (without requiring the centrality assumption) is equivalent to the central ACPAP, as the Kac case allows one to make an averaging process
by a conditional expectation~\cite{Kra1}*{Theorems~5.14 and~5.15}.\end{Rem}

\begin{Rem}
 The terminology above is motivated by the notion of completely positive approximation property: for coamenable Kac-type quantum groups such as $\SU_{\pm 1}(2)$, there is a net of finitely supported and completely positive central multipliers approximating the identity, which could be called the central completely positive approximation property.  Such approximating multipliers are a direct analogue of F{\o}lner sets for amenable discrete groups. However, even for the $\SU_{q}(2)$, one can no longer hope to produce such central completely positive finite rank multipliers simply out of the coamenability, precisely because of the monoidal equivalence with $\Op_F$, which is \emph{not} coamenable! 
\end{Rem}

\begin{Rem}
Since the first condition implies $(\psi_t)_1 = 1$ for the component of the trivial representation, the maps $\Psi_t$ of the above net extend to normal $\varphi$-preserving ucp maps on $\vN(\widehat{\G})$ which approximate the identity in the pointwise convergence with respect to the $\sigma$-weak topology on $\vN(\widehat{\G})$. Moreover, condition~\eqref{eq:ACPAP-finapprox} implies that $(\psi_t)_\pi$ is a $c_0$-sequence for any $t$. In particular, each $\Psi_t$ induces a compact operator on $L^2(\G, \varphi)$ and we obtain the Haagerup property for $(\vN(\widehat{\G}), \varphi)$.  Here, given a von Neumann algebra $M$ and a faithful
normal state $\psi$, $(M, \psi)$ is said to have the Haagerup property if there is a net of $\psi$-preserving normal completely positive maps $(\Psi_t)_{t \in I}$ on $M$ such that
the $\Psi_t$ converge to the identity in the topology of pointwise $\sigma$-weak convergence, and such that the $\Psi_t$ induce compact operators on $L^2(M, \psi)$ by the GNS-map of $\psi$ for each $t$.
\end{Rem}

\begin{Rem}
On the other hand, the representation of $\vN(\widehat{\G})$ on $L^2(\G, \varphi)$ is in standard form, and for any $t$ and $\xi \in L^2(\G, \varphi)$ the formula $x \mapsto \Psi_t(x)
\xi$ defines a compact map from $\vN(\widehat{\G})$ to $L^2(\G, \varphi)$. It follows that $\vN(\widehat{\G})$ has another variant of the Haagerup property in the sense of~\cite{Hou1}.
\end{Rem}

\begin{Prop}
The central ACPAP implies the CCAP for discrete quantum groups.
\end{Prop}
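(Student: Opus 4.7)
The plan is to extract from the central ACPAP net $(\Psi_t)_{t\in I}$ a net of completely contractive finite rank operators on $C_r(\G)$ converging to the identity in the topology of pointwise norm-convergence. The argument has two independent ingredients: showing that the $\Psi_t$ themselves approximate the identity on all of $C_r(\G)$, and then approximating each individual $\Psi_t$ by a completely contractive finite rank map.

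The first step is to verify that $\Psi_t \to \id_{C_r(\G)}$ pointwise in norm. Since each $\Psi_t$ is ucp on $C_r(\G)$, one has $\|\Psi_t\|_\cb = 1$ uniformly in $t$. On the isotypic component $p_\pi(C_r(\G))$, the map $\Psi_t$ acts by the scalar $(\psi_t)_\pi$, so condition \emph{(iii)} of Definition \ref{DefACPAP} yields $\Psi_t(y) \to y$ in norm for every $y \in \Pol(\G)$ (each such $y$ involves only finitely many $\pi$). A standard three-epsilon argument, using the density of $\Pol(\G)$ in $C_r(\G)$ together with the uniform cb-bound $\|\Psi_t\|_\cb = 1$, then upgrades this to pointwise norm-convergence on all of $C_r(\G)$.

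Next, for each pair $(t, \eps) \in I \times (0,1]$, condition \emph{(ii)} furnishes a finitely supported central multiplier $\Phi_{t,\eps} = \sum_{\pi \in S_{t,\eps}} \omega_\pi p_\pi$ with $S_{t,\eps}$ finite and $\|\Phi_{t,\eps} - \Psi_t\|_\cb < \eps$. This $\Phi_{t,\eps}$ is automatically of finite rank, because each $p_\pi$ has finite-dimensional range $\Pol(\G)_\pi$. Moreover $\|\Phi_{t,\eps}\|_\cb \le 1 + \eps$, so the rescaled map $\widetilde{\Phi}_{t,\eps} = (1+\eps)^{-1}\Phi_{t,\eps}$ is a completely contractive finite rank operator, with $\|\widetilde{\Phi}_{t,\eps} - \Psi_t\|_\cb \le \|\widetilde{\Phi}_{t,\eps} - \Phi_{t,\eps}\|_\cb + \|\Phi_{t,\eps} - \Psi_t\|_\cb \le \eps + \eps = 2\eps$.

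Finally, one diagonalizes over the directed set $I \times (0,1]$, ordered so that both coordinates move toward the limit. Given $x \in C_r(\G)$ and $\eta > 0$, first pick $t$ with $\|\Psi_t(x) - x\| < \eta/2$, and then $\eps$ small enough that $2\eps\,\|x\| < \eta/2$; this gives $\|\widetilde{\Phi}_{t,\eps}(x) - x\| < \eta$, which is the CCAP for $C_r(\G)$. No genuine obstacle appears here: the proposition is essentially a packaging statement, and the only delicate point is that the uniform bound $\|\Psi_t\|_\cb = 1$ coming from the complete positivity in \emph{(i)} is exactly what permits both the extension of convergence from $\Pol(\G)$ to $C_r(\G)$ and the cheap renormalization that converts the cb-approximation by $\Phi_{t,\eps}$ into a completely contractive approximation.
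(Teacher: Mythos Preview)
Your proof is correct and follows essentially the same route as the paper's: form a product net over $I$ and an auxiliary parameter governing the cb-approximation error, rescale the finitely supported approximants by $(1+\eps)^{-1}$ (the paper uses $(1+2^{-n})^{-1}$) to force complete contractivity, and then use the uniform bound $\|\Psi_t\|_\cb=1$ from unital complete positivity together with density of $\Pol(\G)$ to pass from convergence on isotypic components to convergence on all of $C_r(\G)$. The only organizational difference is that you first establish $\Psi_t\to\id$ on $C_r(\G)$ and then approximate each $\Psi_t$, whereas the paper directly verifies that the combined net $\tilde\Psi_{t,n}$ converges on $\Pol(\G)$ and invokes density at the end; this is a matter of presentation, not substance.
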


\begin{proof}
Suppose that $\hat{\G}$ has the central ACPAP, with a net $(\psi_t)_{t \in I}$ as in Definition~\ref{DefACPAP}.  Consider the product net $J = I \times \N$. For each $(t, n)$ we choose a finitely supported central multiplier $\Psi_{t, n}$ satisfying $\norm{\Psi_{t, n} - \Psi_t} < 2^{-n}$.  Putting $\tilde{\Psi}_{t, n} = (1 + 2^{-n})^{-1} \Psi_{t, n}$, we obtain a net $(\tilde{\Psi}_{t, n})_{(t, n) \in J}$ of finite rank complete contractions (with respect to the reduced norm) such that $\lim_{(t, n) \in J} (\tilde{\Psi}_{t, n})_\pi = 1$ for any $\pi \in \Irr(\G)$.  Then the net $(\tilde{\Psi}_{t, n})_{(t, n)}$ converges to the identity on $\Pol(\G)$ in the pointwise convergence topology with respect to the reduced norm.  By the density of $\Pol(\G)$ inside $C_r(\G)$ and the uniform bound on the norms of the $\tilde{\Psi}_{t, n}$, we obtain the same convergence on $C_r(\G)$.
\end{proof}

\begin{Rem}
In the above construction we may modify $\tilde{\Psi}_{t, n}$ by replacing the component of trivial representation $(\tilde{\Psi}_{t, n})_1$ with $1$.  This way we obtain a net of normal $\varphi$-preserving maps on $\vN^{\infty}(\G)$ which approximate the identity in the $\sigma$-weak pointwise convergence topology and satisfy $\limsup_{t,n} \bigl\|\tilde{\Psi}_{t, n}\bigr\|_\cb = 1$. Thus, the von Neumann algebra $\vN^{\infty}(\G)$ has the W$^*$-CCAP.
\end{Rem}

\section{Central approximation for free orthogonal quantum groups}
\label{sec:Aof-approx}

We now turn to our main results. Recall that the $\mu_d$ are the dilated Chebyshev polynomials. Motivated by the Kac-type case of \cite{Bra1}, we will show that, for $-1 < t < 1$, the multiplier operators $\Psi_t$ on $\Pol(\SU_q(2))$ given by
\begin{align}\label{eq:multiplier-defn}
\Psi_t &= \sum_{d \in \N} b_d(t) p_d,&
 b_d(t) &= \frac{\mu_{d}(\absv{q}^{t} + \absv{q}^{-t})}{\mu_{d}(\absv{q} + \absv{q}^{-1})}, \quad
\end{align}
extend to completely positive multipliers on $C(\SU_q(2))$ leading to the central ACPAP (here, as in Introduction, $p_d$ denotes the projection onto the isotypic subspace of spin $d/2$).

\begin{Prop}\label{Prop:ct-pos-state-SUq2}
For any $q \in (-1, 1) \setminus 0$ and $-1 < t < 1$, there is a central state $\psi_t$ on $C(\SU_{q}(2))$ such that $(\id \otimes \psi_t) \Delta$ equals $\Psi_t$.
\end{Prop}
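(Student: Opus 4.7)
The proof separates into constructing $\psi_t$ as a linear functional and proving positivity.

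For the first, by the correspondence recalled in Section~\ref{SecNot} between central linear functionals on $\Pol(\SU_q(2))$ and functions on $\Irr(\SU_q(2))$, the prescription $(\psi_t)_{d/2} = b_d(t)$---equivalently, $\psi_t(u^{(d/2)}_{ij}) = \delta_{ij} b_d(t)$---uniquely defines a central linear functional $\psi_t$ on $\Pol(\SU_q(2))$. Its slice map then acts by multiplication by $b_d(t)$ on the $d/2$-isotypic, so $(\id \otimes \psi_t)\Delta = \sum_d b_d(t) p_d = \Psi_t$ holds by construction, and $\psi_t(1) = b_0(t) = 1$ since $\mu_0 \equiv 1$.

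The substantive content is therefore positivity. Since $\SU_q(2)$ is coamenable, $\Pol(\SU_q(2))$ is norm-dense in $C(\SU_q(2))$, so it suffices to prove $\psi_t(x^*x) \geqslant 0$ for $x \in \Pol(\SU_q(2))$. The plan is to realize $\psi_t$ as a vector state $x \mapsto \langle \sigma_t(x)\xi_t, \xi_t\rangle$ of a cyclic $*$-representation $(\sigma_t, H_t, \xi_t)$ of $C(\SU_q(2))$. The natural source, as signaled in the Introduction, is Voigt's one-parameter family of unitary representations of the Drinfel'd double $\mathcal{D}(\SU_q(2))$~\cite{Voi1}, which exists precisely for $t \in (-1, 1)$; restricting to the subalgebra $\Pol(\SU_q(2)) \subset \mathcal{D}(\SU_q(2))$ and pairing against a distinguished ``spherical'' unit vector $\xi_t$, whose behavior under the appropriate adjoint coaction enforces centrality of the restricted functional, should produce the desired state.

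The key calculation and main obstacle is verifying the matrix-coefficient identity
\[
\langle \sigma_t(\chi_{d/2})\xi_t, \xi_t\rangle \;=\; (d+1)\, b_d(t) \;=\; \frac{d+1}{\dim_q(d/2)} \sum_{j=-d/2}^{d/2} \absv{q}^{2jt},
\]
obtained by decomposing $\sigma_t(\chi_{d/2})$ along a weight basis of $H_t$ and reading off the $\absv{q}^{2jt}$-weighted profile of $\xi_t$. This is precisely where the purely quantum phenomenon from the Introduction enters: for $\absv{q} < 1$ and $t \neq 0$ the value $\absv{q}^{t} + \absv{q}^{-t}$ strictly exceeds $\norm{\chi_{1/2}}_{C(\SU_q(2))} = 2$, so it lies outside the C$^*$-spectrum of $\chi_{1/2}$ in $C(\SU_q(2))$ and cannot support a positive functional through functional calculus on $C(\SU_q(2))$ alone; it is the larger, ``nonclassical'' spectrum of $\chi_{1/2}$ inside $\mathcal{D}(\SU_q(2))$ that ultimately underwrites positivity of $\psi_t$ throughout $\absv{t} < 1$, and this extra room degenerates in the classical case $\absv{q} = 1$, where one only recovers the trivial $b_d(t) \equiv 1$.
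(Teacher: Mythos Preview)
Your approach is the same as the paper's---realize $\psi_t$ as the vector state of Voigt's representation restricted to $\Pol(\SU_q(2))$---but what you have written is a plan rather than a proof. The paper makes this work by writing down the action explicitly,
\[
\pi_{z}(x)y = f_{1-z}\bigl(S(x_{(2)})\bigr)\, x_{(1)}\, y\, S(x_{(3)}),
\]
taking the cyclic vector $\xi_t = u^{(0)}_{00}$ in the completion of $\Pol(\SU_q(2)/\T)$ under the appropriate inner product, and then computing directly
\[
\psi_t\bigl(u^{(d/2)}_{ij}\bigr) = \sum_{k,l} f_{1-t}\bigl(S(u^{(d/2)}_{kl})\bigr)\,\varphi\bigl(u^{(d/2)}_{ik} S(u^{(d/2)}_{lj})\bigr)
= \delta_{ij}\sum_k \frac{Q_{d/2,k}^{\,t}}{\dim_q(U_{d/2})}
= \delta_{ij}\, b_d(t),
\]
using the orthogonality relations~\eqref{eq:inn-prod} and~\eqref{eq:SUq2-q-dim-formula}. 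This single three-line computation simultaneously gives positivity (it is a vector state), centrality (the answer is $\delta_{ij}$ times a scalar), and the correct value $b_d(t)$. Your proposal identifies the right target identity and the right source of representations, but the phrases ``should produce the desired state'' and ``obtained by decomposing \ldots and reading off the $\absv{q}^{2jt}$-weighted profile'' do not substitute for this computation; nor does the claim that centrality is ``enforced'' by the adjoint coaction without saying what property of $\xi_t$ you mean or checking it.

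Your final paragraph on the spectrum of $\chi_{1/2}$ is good intuition and in fact anticipates the content of Section~\ref{SecDrin}, but it is commentary, not an argument: it explains why one cannot get $\psi_t$ from the functional calculus on $C(\SU_q(2))$ alone, not why the Drinfel'd-double route succeeds. The actual positivity comes simply from $\psi_t$ being a vector state once the representation is shown to be a $*$-representation on a Hilbert space---which is precisely the content of \cite{Voi1}*{Lemma~4.3} that you need to invoke.
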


\begin{proof}
First, since $b_d(t)$ is an even function in $t$, we may assume $0 \leqslant t < 1$.  We consider a representation $\pi_z$ of $C(\SU_q(2))$ constructed\footnote{One should note that this parametrization of $\pi_z$ is different from the one of~\cite{Voi1} by $z \leftrightarrow 1 - z$.} in~\cite{Voi1}*{Section 4}. Let
$\Pol(\SU_{q}(2)/\T)\subseteq \Pol(\SU_q(2))$ denote the linear span of the $u_{i0}^{(d)}$
for $d \in \N$ and $-d/2 \leqslant i \leqslant d/2$.

For every $z \in \C$, there is an action $\pi_z$ of $\Pol(\SU_{q}(2))$ on $\Pol(\SU_{q}(2)/\T)$ defined by
\[
\pi_{z}(x)y = f_{1 - z}(S(x_{(2)})) x_{(1)} y S(x_{(3)}).
\]
For any sequence of positive numbers $(M_k)_{k \in \N}$, we can define a new inner product 
$\langle \cdot, \cdot \rangle_M$ on $\Pol(\SU_q(2)/\T)$ by
\[
\left\langle u_{j0}^{(l)}, u_{i0}^{(k)} \right\rangle_M = \frac{\delta_{kl}\delta_{ij} M_k}{Q_{k, i} \dim_q(\pi)} \quad (k, l \in \frac{1}{2}\N),
\]
where $Q_{k, i}$ is the component of the Woronowicz character.  When $M_k = 1$, this is the usual inner product induced by $\varphi$.

For any $t \in \lbrack 0, 1\rbrack$, with the choice $M^{(t)}_k = \prod_{l \leq k} m(1-t, l)$ in the notation of~\cite{Voi1}*{Section~4}, the content of \cite{Voi1}*{Lemma~4.3} could be equivalently stated as that $\pi_t$ becomes a $*$-representation with respect to $\langle \, \cdot\,,\,\cdot\,\rangle_{M^{(t)}}$. It is necessarily bounded as $\Pol(SU_q(2))$ is generated by the components of a unitary matrix,
hence it extends to a $*$-representation $\omega_t$ of $C(\SU_q(2))$ on the Hilbert space completion $\Hsp_t$ of $\Pol(\SU_{q}(2)/\T)$ with respect to $\langle \,
\cdot\,,\,\cdot\,\rangle_{M^{(t)}}$ (recall that $C(\SU_q(2))$ is as well the universal C$^*$-envelope of the $*$-algebra $\Pol(\SU_q(2))$).

Let $\psi_{t}$ be the vector state $\psi_t(x) = \langle  \omega_t(x) u^{(0)}_{0 0},  u^{(0)}_{0 0} \rangle_{M^{(t)}}$ on $C(\SU_{q}(2))$ associated with the element $u^{(0)}_{0 0} \in \Pol(\SU_{q}(2)/\T)$. As $M^{(t)}_0 = 1$, we have
\begin{equation*}
\psi_{t}(x) = f_{1 - t}(S(x_{(2)})) \varphi(x_{(1)} S(x_{(3)})).
\end{equation*}
Using the basic identities from Section~\ref{sec:prelim}, we compute
\begin{align*}
\psi_{t}(u^{(d/2)}_{i j}) & = \sum_{k, l}f_{1 - t} \Bigl(S\Bigl(u^{(d/2)}_{k l}\Bigr)\Bigr) \varphi\Bigl(u^{(d/2)}_{i k} S\Bigl(u^{(d/2)}_{l j}\Bigr)\Bigr) \\
& = \sum_{k} \delta_{i j} Q_{d/2, k}^{t - 1} \frac{Q_{d/2, k}}{\dim_{q}(U_{d/2})} && (\text{by~\eqref{eq:inn-prod}})\\
& = \frac{\mu_d(\absv{q}^t + \absv{q}^{-t})}{\mu_d(\absv{q} + \absv{q}^{-1})} \delta_{i j} && (\text{by~\eqref{eq:SUq2-q-dim-formula}}).
\end{align*}
Comparing the above with~\eqref{eq:multiplier-defn}, we obtain the assertion.
\end{proof}

The next theorem can be interpreted as the uniform boundedness of the representations $\pi_z$ for $z \in \D$, the strip formed by complex numbers with real part between $-1$ and $1$.

\begin{Theorem}\label{Thm:Psi-z-slice-by-bdd-psi-z}
For any $q \in (-1, 1) \setminus 0$, there is a holomorphic map $z \mapsto \theta_z$ from $\D$ to the space of bounded central functionals on $C(\SU_{q}(2))$, such that the induced
completely bounded map $\Theta_{z} = (\theta_{z} \otimes \imath)\circ\Delta$ is a nonzero positive scalar multiple of $\Psi_t$ when $z = t\in (-1, 1)$.
\end{Theorem}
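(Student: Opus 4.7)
The strategy is to extend the vector-state construction of Proposition~\ref{Prop:ct-pos-state-SUq2} from real $t \in (-1,1)$ to complex $z \in \D$, and to recognize the result as a holomorphic family of vector functionals of the representations $\pi_z$. The algebraic formula
\[
\theta_z(x) = f_{1-z}(S(x_{(2)}))\,\varphi\bigl(x_{(1)} S(x_{(3)})\bigr), \qquad x \in \Pol(\SU_q(2)),
\]
defines a linear functional on $\Pol(\SU_q(2))$ for every $z \in \C$, holomorphic in $z$ for each fixed $x$. Repeating verbatim the matrix-coefficient computation in the preceding proof with $z$ in place of $t$ yields
\[
\theta_z(u^{(d/2)}_{ij}) = \delta_{ij}\,\frac{\mu_d(\absv{q}^z + \absv{q}^{-z})}{\mu_d(\absv{q} + \absv{q}^{-1})},
\]
so $\theta_z$ is central and $\Theta_z = (\theta_z \otimes \id)\Delta$ coincides with $\Psi_t$ on $\Pol(\SU_q(2))$ whenever $z = t \in (-1,1)$.

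The central step is to show that for every $z \in \D$ the functional $\theta_z$ extends to a bounded one on $C(\SU_q(2))$, with norm locally bounded in $z$. For this I would interpret $\theta_z$ as the vector functional
\[
\theta_z(x) = \bigl\langle \pi_z(x) u^{(0)}_{00},\, u^{(0)}_{00}\bigr\rangle_{M^{(\Re z)}}
\]
of the algebraic representation $\pi_z$ acting on $\Hsp_{\Re z}$. Writing $z = t + is$ and decomposing $f_{1-z} = f_{1-t}\,f_{-is}$ in the convolution algebra of $\Pol(\SU_q(2))^*$ presents $\pi_z$ as a perturbation of the $*$-representation $\omega_t$ by the Woronowicz character $f_{-is}$, whose values on the weight vectors $u^{(d/2)}_{kk}$ are scalars of modulus one. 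A direct estimate of $\pi_z(x)$ in the basis $\{u^{(d/2)}_{i0}\}$ of $\Pol(\SU_q(2)/\T)$ then yields a bound on $\|\pi_z(x)\|_{B(\Hsp_t)}$ by $\|\omega_t(x)\|$ times a factor holomorphic in $z$ and locally bounded on $\D$. Consequently $\pi_z$ extends to a bounded algebra homomorphism from $C(\SU_q(2))$ into $B(\Hsp_t)$.

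Given such uniform boundedness, $\theta_z$ is a bounded central functional with norm locally bounded in $z \in \D$. Holomorphy of $z \mapsto \theta_z$ as a map into $C(\SU_q(2))^*$ then follows from weak holomorphy on the dense subalgebra $\Pol(\SU_q(2))$ (evident from the algebraic formula) combined with the local uniform norm control, by Morera together with density. The identity $\Theta_t = \Psi_t$ for real $t \in (-1,1)$ is already contained in the matrix-coefficient computation, so the scalar factor claimed in the statement is simply $1$. The main obstacle is the uniform boundedness of $\pi_z$ on $\Hsp_t$: for real $t$, boundedness of $\omega_t$ was automatic because $\pi_t$ is a $*$-representation of a $*$-algebra generated by the entries of a unitary corepresentation, but for complex $z$ one must carefully control the non-self-adjoint twist $f_{-is}$ and propagate the resulting estimate from generators to the whole of $C(\SU_q(2))$.
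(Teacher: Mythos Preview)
Your outline correctly identifies the architecture of the argument, but the central step---boundedness of $\theta_z$ on $C(\SU_q(2))$ for non-real $z$---is left as an assertion rather than a proof. You write that a ``direct estimate'' of $\pi_z(x)$ in the weight basis yields $\|\pi_z(x)\|_{B(\Hsp_t)} \le C(z)\,\|\omega_t(x)\|$ with $C(z)$ locally bounded, but no such estimate is given, and it is not clear one exists in this form. The decomposition $f_{1-z} = f_{1-t}\,f_{-is}$ does not produce $\pi_z$ as a conjugate of $\omega_t$ by a bounded invertible operator on $\Hsp_t$: the Woronowicz character enters through the \emph{middle} leg of $\Delta^{(2)}(x)$, so the twist intertwines nontrivially with both the left and right multiplications in $\pi_z(x)y = x_{(1)} y S(x_{(3)})\,f_{1-z}(S(x_{(2)}))$. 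Since $\pi_z$ is not a $*$-homomorphism for $\Im z \neq 0$, bounds on generators do not propagate automatically to $C(\SU_q(2))$; you yourself flag this as ``the main obstacle'', and the proposal contains no mechanism to overcome it. Note also that your vector-state formula $\langle \pi_z(x) u^{(0)}_{00}, u^{(0)}_{00}\rangle_{M^{(\Re z)}}$ involves an inner product that varies with $\Re z$, so even granting boundedness this particular expression is not holomorphic in $z$; only the algebraic formula is, and passing from weak holomorphy on $\Pol(\SU_q(2))$ to norm holomorphy on $C(\SU_q(2))^*$ requires precisely the uniform bound you have not supplied.

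The paper avoids this difficulty entirely by changing viewpoint: rather than vary the representation, it fixes a single representation and varies the vector. Using the conditional expectation $E$ onto $C^*(\alpha)$ (the Toeplitz algebra) and the fact that $\omega_t|_{C^*(\alpha)}$ on the cyclic subspace is unitarily equivalent to the standard representation $\rho_q^0$ on $\ell^2(\N)$ for every real $t$, one is reduced to finding a holomorphic family of eigenvectors $\eta_z \in \ell^2(\N)$ for $\rho_q^0(q^{-1}\alpha + q\alpha^*)$ with eigenvalue $|q|^z + |q|^{-z}$. These are written down explicitly via continuous $q$-Hermite polynomials, and their $\ell^2$-convergence for $z \in \D$ is checked by hand. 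Then $\theta_z(x) = \langle \rho_q^0(E(x))\eta_z, \eta_{\bar z}\rangle$ is manifestly bounded (a vector functional of a fixed $*$-representation) and holomorphic (the vectors are). This is why the statement allows a positive scalar factor $C_t = \|\eta_t\|^2$ rather than forcing $\Theta_t = \Psi_t$ on the nose.
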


In order to prove this theorem, we investigate the representations $\omega_t$ which appeared in the proof of Proposition~\ref{Prop:ct-pos-state-SUq2} in more detail in a series of
lemmas. In the sequel, we will keep using the notations of the proof of Proposition \ref{Prop:ct-pos-state-SUq2}.

Recall the representation $\rho_q$ of $C(SU_q(2))$ on $\ell^2(\N)\otimes \ell^2(\Z)$. Let $V$ denote the isometry $\ell^2(\N) \to \ell^2(\N) \otimes \ell^2(\Z)$ given by $\xi \to \xi \otimes e_0$.  Then the formula $E(T) = (V^* T V) \otimes 1$ defines a conditional expectation from $B(\ell^2(\N) \otimes \ell^2(\Z))$ onto $B(\ell^2(\N)) \otimes 1$.  The restriction of $E$ to $\rho_q(C(\SU_q(2))$ can be considered as a conditional expectation from $C(\SU_q(2))$ onto $C^*(\alpha)$.  More conceptually, it can be defined as the averaging with respect to the scaling group of $\SU_q(2)$ generated by the square of the antipode.  Concretely, $E$ on $C(\SU_q(2))$ can be expressed as $E(u_{ij}^{(d/2)}) = \delta_{ij}u_{ii}^{(d/2)}$. From this description, we
obtain the following lemma.

\begin{Lem}\label{Lem:central-state-compress-to-alpha}
Let $\psi$ be a central state on $C(\SU_q(2))$. We have $\psi(x) = \psi(E(x))$.
\end{Lem}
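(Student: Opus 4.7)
The plan is to reduce the identity to the Hopf $*$-subalgebra $\Pol(\SU_q(2))$, where both sides can be compared coefficient by coefficient, and then extend by norm-continuity.

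The restriction of $\psi$ to $\Pol(\SU_q(2))$ is still a central functional in the sense of Section~\ref{SecNot}, so by the parametrization of central functionals recalled around equation~\eqref{EqId} there exist scalars $(\psi_d)_{d\in\N}$ with
\[
\psi\bigl(u^{(d/2)}_{ij}\bigr) = \delta_{ij}\,\psi_d \qquad (d\in\N,\ -d/2\le i,j\le d/2).
\]
It is essential here that one works in the basis in which the Woronowicz characters $f_z$ are diagonal, which is precisely the basis fixed in Section~\ref{SecQuantumSU}; only then does centrality force the off-diagonal coefficients $\psi(u^{(d/2)}_{ij})$ with $i\neq j$ to vanish.

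Combining this with the explicit description $E\bigl(u^{(d/2)}_{ij}\bigr) = \delta_{ij}\, u^{(d/2)}_{ii}$ recorded just before the lemma, one computes
\[
\psi\Bigl(E\bigl(u^{(d/2)}_{ij}\bigr)\Bigr) = \delta_{ij}\,\psi\bigl(u^{(d/2)}_{ii}\bigr) = \delta_{ij}\,\psi_d = \psi\bigl(u^{(d/2)}_{ij}\bigr),
\]
so $\psi = \psi\circ E$ on the span of matrix coefficients, which is norm-dense in $C(\SU_q(2))$.

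Finally, $\psi$ is a state and $E$ is a norm-one (completely positive) conditional expectation, so $\psi$ and $\psi\circ E$ are bounded linear functionals on $C(\SU_q(2))$ agreeing on a norm-dense $*$-subalgebra; the identity therefore extends to all of $C(\SU_q(2))$. There is no serious obstacle: all the work has been absorbed into the classification of central functionals, which dovetails perfectly with the fact that $E$ annihilates exactly the off-diagonal matrix coefficients. A more conceptual alternative would be to exploit the description of $E$ as the ergodic average of the scaling automorphism group $\tau_t(x) = f_{it}(x_{(1)})\, x_{(2)}\, f_{-it}(x_{(3)})$ and to verify $\psi\circ\tau_t = \psi$ directly from centrality together with the homomorphism property of the $f_z$; the matrix coefficient route above, however, is the shortest.
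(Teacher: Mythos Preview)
Your proof is correct and follows exactly the route the paper intends: combine the description $E\bigl(u^{(d/2)}_{ij}\bigr) = \delta_{ij}\,u^{(d/2)}_{ii}$ with the fact that a central functional sends $u^{(\pi)}_{ij}$ to $\delta_{ij}\,\omega_\pi$, then extend by density. One small remark: your caveat about the basis is unnecessary, since a central functional acts as a scalar on each irreducible block in \emph{any} basis of a unitary corepresentation (it commutes with the full matrix algebra sitting inside $\Pol(\G)^*$), so the vanishing of the off-diagonal values is not tied to the diagonalization of the $f_z$.
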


\begin{Lem}\label{LemEigen}
For any $-1 < t < 1$, we have $\omega_t(q^{-1} \alpha + q \alpha^*) u_{00}^{(0)} = \sgn(q) (\absv{q}^{t} + \absv{q}^{-t}) u_{00}^{(0)}$.
\end{Lem}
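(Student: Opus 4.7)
The plan is to compute $\pi_t(\alpha)\cdot 1$ and $\pi_t(\alpha^*)\cdot 1$ directly from the definition
\[
\pi_z(x)y = f_{1-z}(S(x_{(2)}))\, x_{(1)}\, y\, S(x_{(3)})
\]
with $y = u^{(0)}_{00} = 1$, and then take the appropriate linear combination. The key simplifying observation is that $f_{1-z}$ applied to the matrix elements of the fundamental corepresentation annihilates off-diagonal terms: using $S\bigl(u^{(1/2)}_{kl}\bigr) = u^{(1/2)*}_{lk}$ together with the identity $f_{w}\bigl(u^{(\pi)*}_{ij}\bigr) = \delta_{ij} Q_{\pi,i}^{-w}$ (which follows from $f_{w}(x^{*}) = \overline{f_{-\bar w}(x)}$ and reality of the $Q$'s), only the diagonal middle-tensor-leg contributes in the expansion of $\Delta^{(2)}$.

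Concretely, I would expand $\Delta^{(2)}(\alpha)$ and $\Delta^{(2)}(\alpha^*)$ from the corepresentation formula $\Delta(u_{ij}) = \sum_k u_{ik} \otimes u_{kj}$ applied to the matrix $U_{1/2}$; after the diagonal restriction imposed by $f_{1-z} \circ S$ on the middle leg, and with $Q_{1/2, \pm 1/2} = |q|^{\pm 1}$, one obtains
\begin{align*}
\pi_z(\alpha) \cdot 1   &= |q|^{1-z}\,\alpha\alpha^* \;+\; q^{2}|q|^{z-1}\,\gamma^{*}\gamma,\\
\pi_z(\alpha^{*}) \cdot 1 &= |q|^{1-z}\,\gamma\gamma^{*} \;+\; |q|^{z-1}\,\alpha^{*}\alpha.
\end{align*}

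Next, I would form $q^{-1}\pi_z(\alpha)\cdot 1 + q\,\pi_z(\alpha^*)\cdot 1$, pull out a factor of $\sgn(q)$ from $q^{\pm 1} = \sgn(q)|q|^{\pm 1}$, and collapse the result to a scalar using the $\SU_q(2)$ relations $\gamma\gamma^{*} = \gamma^{*}\gamma$, $\alpha\alpha^{*} + q^{2}\gamma\gamma^{*} = 1$, and the Toeplitz-type identity $\alpha\alpha^{*} - q^{2}\alpha^{*}\alpha = 1 - q^{2}$. Substituting $\gamma\gamma^{*} = |q|^{-2}(1 - \alpha\alpha^{*})$ eliminates the $\gamma\gamma^{*}$ term, after which the remaining $\alpha\alpha^{*}$ coefficient can be rewritten via the Toeplitz relation so that all $\alpha^{*}\alpha$ contributions cancel, leaving only the constant $|q|^{t} + |q|^{-t}$.

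The step I expect to require the most care is not a conceptual one but the bookkeeping in the last simplification: tracking signs of $q$ versus $|q|$, and verifying that the three relations combine exactly to cancel both $\alpha\alpha^{*}$ and $\alpha^{*}\alpha$ terms. There is no genuine obstacle; the lemma is essentially an explicit computation, and its role in the paper is to identify the distinguished eigenvector of a Jacobi-type operator acting on $\Hsp_{t}$, which will subsequently allow one to analyze the representation $\omega_t$ (and hence $\psi_t$) through the restriction to the commutative subalgebra $C^{*}(\alpha)$ identified in Lemma~\ref{Lem:central-state-compress-to-alpha}.
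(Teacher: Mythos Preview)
Your proposal is correct and is precisely the direct computation the paper alludes to; the paper's own proof consists of the single sentence ``This follows from a direct computation, using the definition of $\omega_t$ and the formulas in Section~\ref{sec:prelim}.'' Your expansion of $\pi_z(\alpha)\cdot 1$ and $\pi_z(\alpha^*)\cdot 1$ via the diagonal restriction from $f_{1-z}\circ S$, followed by the use of $\gamma\gamma^* = \gamma^*\gamma$, $\alpha\alpha^* + q^2\gamma\gamma^* = 1$, and $\alpha\alpha^* - q^2\alpha^*\alpha = 1 - q^2$, carries this out correctly and collapses to $\sgn(q)(|q|^{-t} + |q|^{t})$ as claimed.
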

\begin{proof} This follows from a direct computation, using the definition of $\omega_t$ and the formulas in Section \ref{sec:prelim}.
\end{proof}

We let $\Ksp_t$ denote the closed span of $\omega_t(C^*(\alpha))u^{(0)}_{0 0}$ in $\Hsp_t$. We denote the representation of $C^*(\alpha)$ on $\Ksp_t$ induced by $\omega_t$ as $\omega^0_t$.

\begin{Lem}\label{LemEquiv}
For any $-1 < t<1$, the $*$-representation $\omega^0_t$ of $C^*(\alpha)$ is unitarily equivalent to $\rho^0_q$ (cf.~Subsection \ref{SecQuantumSU}).
\end{Lem}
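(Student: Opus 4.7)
The plan is to produce a unit vector $\xi_0 \in \Ksp_t$ lying in the kernel of $\omega_t(\alpha)$, and to verify it is cyclic for $\omega_t^0$. Assuming such a $\xi_0$ has been found, the $q$-Toeplitz relation $\alpha\alpha^* - q^2\alpha^*\alpha = 1-q^2$ combined with $\omega_t(\alpha)\xi_0 = 0$ forces, by induction on $n$, the vectors $\xi_n := \omega_t(\alpha^*)^n\xi_0/\sqrt{\prod_{k=1}^{n}(1-q^{2k})}$ to form an orthonormal family on which $\omega_t(\alpha)$ acts as the weighted shift $\xi_n \mapsto \sqrt{1-q^{2n}}\,\xi_{n-1}$ and $\omega_t(\alpha^*)$ as $\xi_n \mapsto \sqrt{1-q^{2(n+1)}}\,\xi_{n+1}$. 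Since this matches the action of $\rho_q^0$ on the canonical basis of $\ell^2(\N)$, the assignment $e_n \mapsto \xi_n$ extends to the sought unitary equivalence $\omega_t^0 \cong \rho_q^0$.

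To produce $\xi_0$, I would expand it as a formal series $\xi_0 = \sum_{m\geq 0} c_m v_m$ with $v_m := \omega_t(\alpha^*)^m u^{(0)}_{00}$. Lemma~\ref{LemEigen} rewrites as $\omega_t(\alpha)v_0 = q\,\sgn(q)(\absv{q}^t + \absv{q}^{-t})v_0 - q^2 v_1$, and iterating the commutation relation yields $\alpha\alpha^{*m} = q^{2m}\alpha^{*m}\alpha + (1-q^{2m})\alpha^{*(m-1)}$, which shows that $\omega_t(\alpha)v_m$ is an explicit linear combination of $v_{m-1}, v_m, v_{m+1}$. The equation $\omega_t(\alpha)\xi_0 = 0$ then becomes a three-term recurrence for the coefficients $(c_m)$, whose distinguished solution (with $c_0 \neq 0$) can be identified, up to scalar, with a little $q$-Jacobi polynomial sequence evaluated at $\absv{q}^{2t}$. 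Together with the weights $M^{(t)}_k$ defining the norms of the $v_m$ in $\langle\ccdot, \ccdot\rangle_{M^{(t)}}$, standard $q$-polynomial asymptotics ensure that this series converges to a nonzero vector in $\Hsp_t$ whenever $-1 < t < 1$ and $0 < \absv{q} < 1$.

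For density of $\mathrm{span}\{\xi_n\}$ in $\Ksp_t$, I would invert the upper-triangular transition between $(v_m)$ and $(\xi_n)$: since $c_0 \neq 0$ and $\xi_n$ is a specific combination of $v_0,v_1,\ldots$ with leading term a nonzero multiple of $v_n$, one recursively recovers each $v_m$ as an $\ell^2$-combination of the $\xi_n$, so that $\Ksp_t = \overline{\omega_t(C^*(\alpha))u^{(0)}_{00}} = \overline{\mathrm{span}\{v_m\}} \subseteq \overline{\mathrm{span}\{\xi_n\}}$. The main obstacle is the quantitative convergence and invertibility analysis for these two series, requiring sufficiently sharp control of both the little $q$-Jacobi coefficients and the growth of $M^{(t)}_k$. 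This is precisely where the hypotheses $\absv{q} < 1$ (giving exponential decay of the relevant $q$-shifted factorials) and the strict range $-1 < t < 1$ (keeping the spectral parameter of Lemma~\ref{LemEigen} away from the endpoints of the continuous spectrum of $q^{-1}\rho_q^0(\alpha) + q\rho_q^0(\alpha)^*$) become essential; in the degenerate case $q = \pm 1$ the construction collapses, reflecting the classical coamenability of $\SU(2)$.
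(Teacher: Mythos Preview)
Your approach is constructive and genuinely different from the paper's, and the skeleton is sound: if you can produce a nonzero $\xi_0\in\Ksp_t\cap\ker\omega_t(\alpha)$ and show that the $\xi_n$ span $\Ksp_t$, the Toeplitz relation does the rest. But the two analytical steps you flag as ``the main obstacle'' are not actually carried out. Identifying the solution of your three-term recurrence with a little $q$-Jacobi sequence is asserted, not verified, and even granting it you would still need to control $\|v_m\|_{M^{(t)}}$---these are not simply the weights $M^{(t)}_k$, since $v_m=\omega_t(\alpha^*)^m u_{00}^{(0)}$ is a nontrivial combination of several $u^{(d)}_{00}$. Likewise, the density argument (``recursively recovers each $v_m$ as an $\ell^2$-combination of the $\xi_n$'') is only a formal inversion of an upper-triangular system; proving that the inverse coefficients are square-summable is a separate estimate you have not supplied. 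So as written this is a plausible programme rather than a proof.

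The paper sidesteps all of this with a short structural argument. The Toeplitz algebra $C^*(\alpha)$ has a well-known dichotomy: every irreducible representation is either a character (on which $\alpha$ acts as a unimodular scalar) or is unitarily equivalent to $\rho_q^0$. Decomposing $\Hsp_t$ accordingly as $\Hsp_t^{(0)}\oplus\Hsp_t^{(1)}$, Lemma~\ref{LemEigen} forces the cyclic vector $u_{00}^{(0)}$ into the $\rho_q^0$-part $\Hsp_t^{(0)}$, since the eigenvalue $\sgn(q)(|q|^t+|q|^{-t})$ of $q^{-1}\alpha+q\alpha^*$ for $-1<t<1$ is not realised in any character. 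Hence $\omega_t^0$ is an amplification of $\rho_q^0$. The multiplicity is then pinned down by observing that $\omega_t(\alpha)$ acts tridiagonally on the span of the $u^{(d)}_{00}$, so $\ker\omega_t^0(\alpha)$ is at most one-dimensional and the amplification is simple. Your explicit construction would yield more (an actual formula for the intertwiner), but at the cost of the hard analysis you have not completed; the paper's route buys the result in a few lines with no estimates at all.
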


\begin{proof}
It is well-known that any irreducible representation of the Toeplitz algebra is either a character or is isomorphic to $\rho^0_q$. Hence $\Hsp_t$ splits into a direct sum of
$C^*(\alpha)$-representations $\Hsp_t^{(0)}\oplus \Hsp_t^{(1)}$, where $\Hsp_t^{(0)}$ is an amplification of $\rho^0_q$ and $\Hsp_t^{(1)}$ consists of a direct integral of characters.
By Lemma \ref{LemEigen}, it follows immediately that $u_{00}^{(0)} \in \Hsp_t^{(0)}$. Thus, $\omega^0_t$ is an amplification of $\rho^0_q$. Moreover, as $\Ksp_t$ is contained in the
closed linear span of the $u_{00}^{(d)}$, the tridiagonal form of $\omega_t(\alpha)$ on that span (cf.~\cite{Voi1}) implies that $\ker(\omega^0_t(\alpha))$ is at most one-dimensional.
Consequently, $\omega_t^0$ is equivalent to $\rho_q^0$ itself.
\end{proof}

Before embarking on the proof of Theorem~\ref{Thm:Psi-z-slice-by-bdd-psi-z}, let us introduce some notations from the theory of $q$-special functions (see e.g.~\cite{Koe1} for details).
Given parameters $q$, $x$, and a nonnegative integer $k$, we set
\[
(x; q)_k = (1 - x) (1 - x q) \cdots (1 - x q^{k - 1}),
\]
with the convention $(x; q)_0 = 1$. For any pair of nonnegative integers $k \leqslant n$, the \emph{$q$-binomial coefficient} is given by
\[
\qbin{n}{k}{q} = \frac{(q; q)_n}{(q; q)_k (q; q)_{n - k}}.
\]
With $x = y + y^{-1}$, the \emph{continuous $q$-Hermite polynomials} (in dilated form) are given by
\[
H_n(x; q) = \sum_{k = 0}^n \qbin{n}{k}{q} y^{ (n - 2 k)}.
\]
This corresponds to $H_n((x/2) \mathinner{|} q)$ in the notation of~\cite{Koe1}*{Section~3.26}.  They satisfy the recurrence relation
\begin{equation}\label{eq:q-Hermite-recurr-formula}
x H_n(x; q) = H_{n + 1}(x; q) + (1 - q^n) H_{n - 1}(x; q).
\end{equation}

\begin{proof}[Proof of Theorem~\ref{Thm:Psi-z-slice-by-bdd-psi-z}]
For the sake of simplicity we assume that $q$ is positive. The general case be handled in the same way by simply replacing all occurrences of $q$ by $\absv{q}$.

For each $-1 < t < 1$, consider
\begin{equation}\label{eq:defn-coeff-of-eta-t}
p_n(t) = \frac{q^n}{\sqrt{(q^2;q^2)_n}} H_n(q^t + q^{-t}; q^2) = \frac{q^n}{\sqrt{(q^2;q^2)_n}} \sum_{k = 0}^n \qbin{n}{k}{q^2} q^{t (n - 2 k)}.
\end{equation}
Since the sequence $(q^2; q^2)_k$ monotonically decreases to some nonzero number $D$ as $k \to \infty$, the right hand side of the above can be bounded by $D^{-5/2} q^{(1 + t) n} \sum_{k=0}^n q^{-2 t k} < D' q^{(1 + t) n - 2 n t}$ for another constant $D'$.  Since one has $t < 1$, the sequence $(p_n(t))_{n \in \N}$ is square summable.  Moreover, \eqref{eq:q-Hermite-recurr-formula} implies that
\begin{align*}
(q^t + q^{-t}) p_n(t) &= \frac{(q^t + q^{-t}) q^n}{\sqrt{(1 - q^2) \cdots (1 - q^{2 n})}} H_n(q^t + q^{-t}; q^2) \\
&= \frac{q^n \sqrt{1 - q^{2 (n + 1)}}}{\sqrt{(q^2; q^2)_{n + 1}}} H_{n + 1}(q^t + q^{-t}; q^2) + \frac{q^n \sqrt{1 - q^{2 n}}}{\sqrt{(q^2; q^2)_{n - 1}}} H_{n - 1}(q^t + q^{-t};
q^2)\\
&= q^{-1} \sqrt{1 - q^{2 (n + 1)}} p_{n + 1}(t) + q \sqrt{1 - q^{2 n}} p_{n - 1}(t).
\end{align*}
This implies that the vector $\eta_t = \sum p_n(t) e_n \in \ell^2(\N)$ is an eigenvector of $\rho_q^0(q^{-1} \alpha + q \alpha^*)$ with eigenvalue
$q^t + q^{-t}$.  Lemmas~\ref{LemEigen} and~\ref{LemEquiv} imply that there exists $C_t>0$ such that
\[
C_t\psi_t(x) = \langle \rho_q^0(E(x))\eta_t,\eta_t\rangle,\qquad \forall x\in C(SU_q(2)).
\]

Note that the right-hand side of~\eqref{eq:defn-coeff-of-eta-t} is an expression in $t$ which is positive for real $t$. If we replace $t$ by $z=t + i s$ for $s \in \R$, each
term $q^{z (n - 2 k)}$ has modulus not greater than $q^{t (n - 2 k)}$. Since we already know the $\ell^2$-convergence of $(p_n(t))_{n \in \N}$ for $-1 < t < 1$, we obtain the
$\ell^2$-convergence of $(p_n(z))_{n \in \N}$ for arbitrary $-1 < \Re(z) < 1$.  Now, the vectors $\eta_z = \sum_n p_n(z) e_n \in \ell^2(\N)$ is holomorphic in the variable $z$.  For example, one can directly see this from the estimate
$$
\left | \partial_z p_n(z) \right | \leqslant  D^{-\frac{5}{2}}\sum_{k=0}^n \left | n - 2 k \right | q^{(1 + \Re(z)) n - 2 \Re(z) k}
$$
for $n \in \N$.  The right hand side above is yet again bounded by $D^{-5/2} n^2 q^{(1-\Re(z)) n}$, which is certainly square summable in $n$.  It follows that the bounded functionals $\theta_z$ on $C^*(\alpha)$ defined by $\theta_z(x) = \langle
 \rho^0_q(x) \eta_z ,\eta_{\overline{z}}\rangle$ depend holomorphically on $z$.  By composing these with $E$, we obtain a
holomorphic family of functionals on $C(\SU_q(2))$, which is again denoted by $(\theta_z)_{z \in \D}$.  Setting $C_z = \sum_n p_n(z)^2$, we have the equality
\[
\theta_z\bigl(u^{(d/2)}_{i j}\bigr) = C_z\delta_{ij}\frac{\mu_d(q^{z}+q^{-z})}{\mu_d(q + q^{-1})}.
\]
for $z=t \in (-1, 1)$. Since both sides are holomorphic in $z$ and agree on the interval $(-1, 1)$, we obtain the equality for all $-1 < \Re(z) < 1$.  In particular, $\theta_z$ is
central.  Finally, when $t \in (-1, 1)$,
the number $C_t$ is a strictly positive real number by the expression~\eqref{eq:defn-coeff-of-eta-t}.
\end{proof}

The next two lemmas allow us to connect Theorem~\ref{Thm:Psi-z-slice-by-bdd-psi-z} with the central ACPAP of free orthogonal groups.

\begin{Lem}\label{Lem:sp-proj-cb-norm}
The completely bounded norm of $p_d$ is bounded from above by $\mu_d(|q + q^{-1}|)^2$.
\end{Lem}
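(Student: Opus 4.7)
The plan is to estimate $\|p_d\|_{\cb}$ directly from the Fourier-transform formula \eqref{eq:proj-pi-as-fourier-transform}, which expresses $p_d$ on $C(\SU_q(2))$ as
\[
p_d(x) = (\id \otimes \varphi)\bigl(\Delta(x)(1 \otimes c_d^*)\bigr), \qquad c_d = \dim_q(U_{d/2}) \sum_{i = -d/2}^{d/2} Q_{d/2, i}^{-1}\, u^{(d/2)}_{ii}.
\]
Since Proposition~\ref{PropMon} ensures that the cb-norm of a central multiplier is invariant under monoidal equivalence, and $\Op_F$ is monoidally equivalent to a $\SU_q(2)$, it suffices to bound $\|p_d\|_{\cb}$ in the $\SU_q(2)$ picture.

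First I would decompose $p_d\colon C(\SU_q(2)) \to C(\SU_q(2))$ as a composition of three completely bounded maps: the coproduct $\Delta\colon C(\SU_q(2)) \to C(\SU_q(2)) \otimes_{\min} C(\SU_q(2))$, which is completely contractive as a $*$-homomorphism; right multiplication by $1 \otimes c_d^*$ on the spatial tensor product, whose cb-norm equals $\|c_d\|$; and the slice $\id \otimes \varphi$, whose cb-norm is $1$ since $\varphi$ is a state. Composing these three bounds gives the elementary estimate $\|p_d\|_{\cb} \leq \|c_d\|$.

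The second step is the triangle inequality applied to the explicit expansion of $c_d$. Since $U_{d/2}$ is a unitary matrix with entries in $C(\SU_q(2))$, each diagonal entry $u^{(d/2)}_{ii}$ is a contraction, hence
\[
\|c_d\| \leq \dim_q(U_{d/2}) \sum_{i = -d/2}^{d/2} Q_{d/2, i}^{-1},
\]
and by \eqref{eq:SUq2-q-dim-formula} the latter sum is again $\dim_q(U_{d/2}) = \mu_d(|q + q^{-1}|)$, yielding $\|p_d\|_{\cb} \leq \mu_d(|q + q^{-1}|)^2$. There is no serious obstacle in this argument; the bound is certainly not sharp, but the crude estimate through the triangle inequality is already strong enough to feed into the subsequent summability arguments.
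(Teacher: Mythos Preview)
Your proof is correct and follows essentially the same approach as the paper: bound $\|p_d\|_{\cb}$ by $\|c_d\|$ via the Fourier-transform formula~\eqref{eq:proj-pi-as-fourier-transform}, then use the triangle inequality together with $\sum_i Q_{d/2,i}^{-1} = \dim_q(U_{d/2})$ to get $\|c_d\| \le \dim_q(U_{d/2})^2 = \mu_d(|q+q^{-1}|)^2$. The only difference is that the paper phrases the argument for an arbitrary compact quantum group and irreducible representation $\pi$, so your preliminary reduction via monoidal equivalence is not actually needed.
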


\begin{proof}
In general, for any compact quantum group $\G$ and any irreducible representation $\pi$, formula~\eqref{eq:proj-pi-as-fourier-transform} implies that the cb-norm of $p_\pi$ is bounded by the norm of the
quantum
character $\dim_q(\pi) \sum_i Q_{\pi, i}^{-1} u^{(\pi) *}_{i i}$.
One sees that the latter quantity is bounded by $\dim_q(\pi)^2$ using $\Tr(Q_\pi^{-1}) = \Tr(Q_\pi) = \dim_q(\pi)$. Since we have $\dim_q(\pi_{d / 2}) = \mu_d(|q+q^{-1}|)$, we obtain
the
assertion.
\end{proof}

\begin{Lem}[cf.~\cite{Bra1}*{Proposition~4.4}]\label{Lem:Chebyshev-growth}
Let $q$ be a real number satisfying $0 < q < 1$, and let $z$ be a complex number satisfying $0 <\Re(z) < 1$. We have the estimate
\[
\absv{\frac{\mu_d(q^z + q^{-z})}{\mu_d(q + q^{-1})}} = O(q^{d (1 - \Re(z))}) \quad (d \to \infty).
\]
\end{Lem}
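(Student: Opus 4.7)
The natural approach is to exploit the closed form of the dilated Chebyshev polynomials of the second kind. With the substitution $x = y + y^{-1}$, the recurrence $\mu_0 = 1$, $\mu_1(x) = x$, $x\mu_k = \mu_{k-1} + \mu_{k+1}$ is solved by
\[
\mu_d(y + y^{-1}) = \frac{y^{-(d+1)} - y^{d+1}}{y^{-1} - y},
\]
as one checks by direct substitution. I would begin by recording this identity, then apply it both with $y = q$ (giving the denominator $\mu_d(q + q^{-1})$) and with $y = q^z$ (giving the numerator $\mu_d(q^z + q^{-z})$).

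With this in hand, the ratio becomes
\[
\frac{\mu_d(q^z + q^{-z})}{\mu_d(q + q^{-1})} = \frac{q^{-1} - q}{q^{-z} - q^z}\cdot\frac{q^{-z(d+1)} - q^{z(d+1)}}{q^{-(d+1)} - q^{d+1}}.
\]
The first factor is a nonzero constant depending only on $q$ and $z$, so it can be absorbed into the implicit $O$-constant. For the second factor I would bound the numerator by $|q^{-z(d+1)}| + |q^{z(d+1)}| = q^{-\Re(z)(d+1)} + q^{\Re(z)(d+1)}$, and bound the denominator below by $\tfrac{1}{2} q^{-(d+1)}$ for all sufficiently large $d$ (since $q^{d+1} \to 0$).

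Combining the two estimates produces a majorant of the form
\[
2 q^{(1-\Re(z))(d+1)} + 2 q^{(1+\Re(z))(d+1)},
\]
whose dominant term as $d \to \infty$ is $2 q^{(1-\Re(z))(d+1)} = O(q^{d(1-\Re(z))})$, since $0 < 1 - \Re(z) < 1 + \Re(z)$ and $0 < q < 1$. This yields the desired bound. There is no real obstacle here: once the closed form for $\mu_d$ is written down, everything reduces to elementary estimates, with the only mild subtlety being to ensure that $\Re(z) > 0$ is what makes the $q^{d+1}$ term in the denominator negligible and fixes which of the two exponentials in the numerator dominates.
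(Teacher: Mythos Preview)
Your proof is correct and follows essentially the same route as the paper: both use the closed form $\mu_d(y+y^{-1}) = (y^{-(d+1)}-y^{d+1})/(y^{-1}-y)$, write the ratio explicitly, and then extract the dominant exponential growth. The only cosmetic difference is that the paper phrases the final step via asymptotic equivalence ($\sim$) while you use explicit triangle-inequality bounds; your version is arguably cleaner. One small quibble with your closing commentary: the negligibility of $q^{d+1}$ in the denominator comes simply from $0<q<1$, not from $\Re(z)>0$; the latter only governs which numerator term dominates.
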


\begin{proof}
From the recurrence relation of the Chebyshev polynomials, we see that
\begin{equation}\label{Eq:Chebyshev-growth}
\mu_d(q^z + q^{-z}) = q^{d z} + q^{(d - 2) z} + \cdots + q^{- d z} = \frac{q^{(d + 1) z} - q^{-(d + 1) z}}{q^z - q^{-z}}
\end{equation}
for any complex number $z$. By $\Re(z) > 0$, one has $q^{d z} \rightarrow 0$ while $q^{-d z} \rightarrow \infty$ as $d \rightarrow \infty$. Thus, we have the estimate
\begin{align*}
\absv{\frac{\mu_d(q^z + q^{-z})}{\mu_d(q + q^{-1})}}
&= \absv{ \frac{q^{(d+1)z} - q^{-(d+1)z}}{q^z - q^{-z}} } \cdot \absv{ \frac{q - q^{-1}}{q^{d+1} -
q^{-(d+1)}} }\\
& \sim \absv{ \frac{q - q^{-1}}{q^z - q^{-z}} } \cdot \absv{ \frac{q^{-(d+1)z}}{q^{-(d+1)}}}
= O(q^{d(1-\Re(z))}).
\end{align*}
This proves the assertion.
\end{proof}

\begin{Theorem}\label{Thm:AoF-central-approx}
Let $F$ be an invertible matrix of size $N$ satisfying $F \overline{F} \in \R I_{N}$. The discrete quantum group $\FO_F$ has the central ACPAP.
\end{Theorem}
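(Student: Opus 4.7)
The plan is to verify the three conditions of Definition \ref{DefACPAP} for the candidate net $(\psi_t)_{-1<t<1}$ produced by Proposition \ref{Prop:ct-pos-state-SUq2}, after a reduction to $\SU_q(2)$. Because Proposition \ref{PropMon} shows that a central multiplier transfers across a monoidal equivalence with the same cb-norm and preserves unital complete positivity — and because finite support of a central multiplier is likewise preserved under the bijection on irreducibles — the central ACPAP is itself invariant under monoidal equivalence. Rescaling $F$ (which does not change $\Op_F$ up to isomorphism) I may assume $F\overline{F}=\pm I_N$, and then the equivalence $\Op_F\sim_{\mathrm{mon}}\SU_q(2)$ of \cite{BDV1}, for an appropriate $q\in(-1,1)\setminus\{0\}$, reduces the problem to $\SU_q(2)$.

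Directing the net $(\psi_t)_{-1<t<1}$ by $t\to 1$, condition (i) is exactly the content of Proposition \ref{Prop:ct-pos-state-SUq2}, and condition (iii) is immediate from $(\psi_t)_{d/2}=b_d(t)\to b_d(1)=1$ for each $d\in\N$. All that remains is condition (ii): the cb-norm approximability of each $\Psi_t$ by finitely supported central multipliers. For this I would combine the decay estimate of Lemma \ref{Lem:Chebyshev-growth} with the cb-norm bound of Lemma \ref{Lem:sp-proj-cb-norm} to locate an open subregion $\mathcal{R}\subset\D$ — concretely, an interval of small positive reals — on which the series $\sum_d|C_z b_d(z)|\,\|p_d\|_{\cb}$ converges. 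On $\mathcal{R}$ the truncations $\sum_{d\leq N}C_z b_d(z)p_d$ therefore converge to $\Theta_z$ in cb-norm, so $\Theta_z$ lies in the cb-norm closure $\mathcal{S}\subset\CB(C(\SU_q(2)))$ of finitely supported central multipliers.

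To promote this from $\mathcal{R}$ to the whole real interval $(-1,1)$, I would exploit the holomorphic family $z\mapsto\Theta_z$ on $\D$ of Theorem \ref{Thm:Psi-z-slice-by-bdd-psi-z}: composing it with the quotient map $\CB(C(\SU_q(2)))\to\CB(C(\SU_q(2)))/\mathcal{S}$ yields a Banach-space valued holomorphic function on $\D$ that vanishes on $\mathcal{R}$, hence vanishes identically by the identity theorem for holomorphic functions into a Banach space. Therefore $\Theta_z\in\mathcal{S}$ for every $z\in\D$, so in particular $\Psi_t=C_t^{-1}\Theta_t\in\mathcal{S}$ for every $t\in(-1,1)$, which is condition (ii).

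The main obstacle is the initial summability step. Balancing the exponential decay $|b_d(z)|=O(|q|^{d(1-\Re z)})$ of Lemma \ref{Lem:Chebyshev-growth} against the exponential growth of $\|p_d\|_{\cb}$ coming from Lemma \ref{Lem:sp-proj-cb-norm} is delicate, and must be arranged carefully enough to produce a genuine open region of summability. Once that narrow sliver is in hand, the uniqueness property of Banach-space valued holomorphic functions does the heavy lifting, carrying the finite-rank approximation out to the full real interval $(-1,1)$ and giving the result.
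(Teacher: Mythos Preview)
Your overall architecture --- reduce to $\SU_q(2)$ by monoidal equivalence, verify (i) and (iii) directly, and then use the holomorphic family $\Theta_z$ together with the identity theorem in the quotient Banach space to propagate condition (ii) from a small real interval to all of $(-1,1)$ --- matches the paper's strategy. The gap is precisely at the point you yourself flag as the ``main obstacle'': the summability of $\sum_d |b_d(z)|\,\|p_d\|_{\cb}$ fails \emph{everywhere} in $\D$, so your region $\mathcal{R}$ is empty.

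Indeed, from Lemma~\ref{Lem:sp-proj-cb-norm} one has $\|p_d\|_{\cb}\le \mu_d(|q+q^{-1}|)^2\sim C\,|q|^{-2d}$, while Lemma~\ref{Lem:Chebyshev-growth} gives only $|b_d(z)|=O(|q|^{d(1-\Re z)})$ for $0<\Re z<1$ (and by the evenness $b_d(z)=b_d(-z)$ the best possible decay on the whole strip is $O(|q|^d)$, attained near $\Re z=0$). The product is therefore at best of order $|q|^{-d(1+|\Re z|)}$, which diverges for every $z\in\D$. So the truncations $\sum_{d\le N} C_z b_d(z)p_d$ never converge to $\Theta_z$ in cb-norm, and the holomorphy argument has nothing to propagate.

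The paper repairs this with a single modification: it takes as the approximating net the \emph{cubes} $\Psi_t^3=\sum_d b_d(t)^3 p_d$. These are still ucp (being compositions of the ucp maps $\Psi_t$), still satisfy (iii), and now $|b_d(t)|^3\,\|p_d\|_{\cb}=O(|q|^{d(3(1-t)-2)})=O(|q|^{d(1-3t)})$, which is summable for $0<t<1/3$. One then runs your quotient-holomorphy argument with $\Theta_z^3$ in place of $\Theta_z$ (still holomorphic on $\D$) to obtain $\Psi_t^3$ in the cb-closure of finitely supported central multipliers for all $0<t<1$.
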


\begin{proof}
Consider the multiplier operators $\Psi_{t} = \sum_d b_d(t) p_{d/2}$ on $\Pol(\Op_F)$ given by the sequence $(b_{d}(t))_{d}$ as in~\eqref{eq:multiplier-defn}.  We claim that the operators
$\Psi_t^3 = \sum_d b_d(t)^3 p_{d/2}$ for $0 < t < 1$ give a desired net of central multipliers approximating the identity as $t \rightarrow 1$. First, by definition,
condition~\eqref{eq:ACPAP-approxid} of Definition \ref{DefACPAP} is readily satisfied. Let $q \in (-1, 1)$ be such that $\SU_q(2)$ is monoidally equivalent to $\Op_F$ (the case $q=\pm 1$ being obvious, cf.~the
Introduction).
Proposition~\ref{PropMon} implies that it
is enough to check conditions~\eqref{eq:ACPAP-cpness} and~\eqref{eq:ACPAP-finapprox} for $\Psi_t^3$ acting on $C_r(\SU_q(2))$.

Proposition~\ref{Prop:ct-pos-state-SUq2} implies condition~\eqref{eq:ACPAP-cpness}. It remains to prove condition~\eqref{eq:ACPAP-finapprox}. Lemmas~\ref{Lem:sp-proj-cb-norm}
and~\ref{Lem:Chebyshev-growth} imply that $(b_d(t)^3 \norm{p_d}_\cb)_{d \in \N}$ is absolutely summable for $0 < t < 0.3$. In particular, $\Psi_t^3$ is approximated by
finitely supported central multipliers in the cb-norm when $t$ is in this range.

It follows that the holomorphic map $\Theta_z^3$ from $\D$ to the space $M$ of completely bounded multipliers given by
Theorem~\ref{Thm:Psi-z-slice-by-bdd-psi-z} remains in the cb-norm closure $M^c_0$ of finitely supported central multipliers when $z \in (0, 0.3)$. Since any nontrivial holomorphic function
in one variable has isolated zeros, $\Theta_z^3$ regarded as a map into the Banach space $M / M^c_0$ must be trivial. Thus, we obtain $\Psi_t^3 \in M^c_0$ for arbitrary $0< t < 1$.
\end{proof}

\begin{Rem}
When $0 < \absv{q} \leqslant 1 / \sqrt{3}$, the cb-norm of the projections $p_d$ are actually dominated by a quadratic polynomial in $d$~\cite{Fre1}*{Theorem~5.4}. Combining this
with the exponential decay of $b_d(t)$ for $0 \leqslant t < 1$, one obtains another proof of Theorem~\ref{Thm:AoF-central-approx} for the matrices $F \in M_N(\C)$ of size $N > 2$
satisfying $F \overline{F} \in \R I_N$, directly from Proposition~\ref{Prop:ct-pos-state-SUq2}.  However, we need the full generality of Theorem~\ref{Thm:AoF-central-approx} for the later application in Theorem~\ref{Thm:free-q-cent-Hage}.
\end{Rem}

\begin{Rem}
A well known strategy to construct a net of $c_0$ completely positive multipliers approximating the identity is to use the L\'{e}vy process $(e^{- t \psi})_{t > 0}$ associated with a
conditionally positive functional $\psi$ of large growth. L\'{e}vy processes on $\SU_q(2)$ were studied by Sch\"{u}rmann and Skeide in the 90's~(\cite{Ske1}, etc.). However, it
seems difficult to find a conditionally positive functional with both the centrality and large growth directly from their presentation. The centrality can be achieved by
``averaging'' with respect to the adjoint action, but for quantum groups of non-Kac-type, this operation can affect the growth estimate and condition~\eqref{it:summability-condi} in
the Introduction may
not hold anymore after such an operation~\cite{Cip1}.
\end{Rem}

\section{Complements on universal quantum groups}
\label{Sec:compl-free-qgrp}

\subsection{Free products}

Let us briefly review the free product construction of quantum groups, mainly from~\cite{Wan1}.  Let $\G_1$ and $\G_2$ be two compact quantum groups, and write $\Pol(\G_i)^\circ = \oplus_{\pi \in \Irr(\G) \setminus 1} \Pol(\G_i)_\pi$ for the orthogonal complement of the unit with respect to the Haar state.  Then the free product of the unital $*$-algebras $\Pol(\G_1)$ and $\Pol(\G_2)$ (with respect to the Haar states) can be realized as the direct sum of $\C$ and all the possible alternating tensor products of $\Pol(\G_1)^\circ$ and $\Pol(\G_2)^\circ$, i.e.~those whose adjacent tensor factors are distinct. It carries a natural structure of Hopf $*$-algebra, containing both $\Pol(\G_1)$ and $\Pol(\G_2)$ as Hopf $*$-subalgebras. In the following we write $\Pol(\G_1) \frprd_r \Pol(\G_2)$ for this free product. As the Haar state on this free product is the free product of the Haar states, the reduced C$^*$-algebra $C_r(\G)$ associated to $\Pol(\G)=\Pol(\G_1)\frprd_r \Pol(\G_2)$ coincides with the C$^*$-algebraic reduced free product $(C_r(\G_1),\varphi_1)\rfrprd (C_r(\G_2),\varphi_2)$.

The irreducible unitary corepresentations of $\Pol(\G_1) \frprd_r \Pol(\G_2)$ are given by
\begin{equation}\label{eq:irrep-free-prod}
\pi_1 \otimes \pi_2 \otimes \cdots \otimes \pi_n \quad \left ( \pi_j \in ( \Irr(\G_1) \setminus 1 ) \coprod ( \Irr(\G_2) \setminus 1 ) \right )
\end{equation}
for $n \in \N$, such that if $\pi_j$ is in $\Irr(\G_1)$ then $\pi_{j+1}$ is in $\Irr(\G_2)$ and vice versa.  The case $n = 0$ corresponds to the trivial representation, and in general the spectral subspace corresponding to~\eqref{eq:irrep-free-prod} is given by $\Pol(\G_{j_1})_{\pi_1} \otimes \cdots \otimes \Pol(\G_{j_n})_{\pi_n}$, where the $j_k \in \{1, 2\}$ satisfies $\pi_{j_k} \in \Irr(\G_{j_k})$.  The dual discrete quantum group represented by $\Pol(\G_1) \frprd_r \Pol(\G_2)$ is denoted by $\widehat{\G}_1 * \widehat{\G}_2$, and called the \emph{free product} of $\widehat{\G}_1$
and $\widehat{\G}_2$.

Monoidal equivalence is compatible with free products. Indeed, if compact quantum groups $\G_i$ and $\qH_i$ are monoidally equivalent for $i = 1, 2$, the universality of free
products implies that the free product $\Pol(\G_1, \qH_1) \frprd_r \Pol(\G_2, \qH_2)$ with respect to the invariant states admits a left coaction of $\Pol(\G_1) \rfrprd
\Pol(\G_2)$ and a right coaction of $\Pol(\qH_1) \rfrprd \Pol(\qH_2)$.   Analogous to the case of function algebra, the spectral subspace corresponding to~\eqref{eq:irrep-free-prod} is given by $\Pol(\G_{j_1}, \qH_{j_1})_{\pi_1} \otimes \cdots \otimes \Pol(\G_{j_n}, \qH_{j_1})_{\pi_n}$.  In particular, the coactions of the free product algebras are both ergodic.  This provides a linking algebra between the free product quantum groups.

\subsection{Free unitary quantum groups}

Let $N \geqslant 2$ be an integer, and $F \in M_N(\C)$ invertible. The \emph{free unitary quantum group} $\Up_F$ associated with $F$ is given by the Hopf $*$-algebra $\Pol(\Up_F)$ which is
universally generated by the elements $(u_{i j})_{1 \leqslant i, j \leqslant N}$ subject to the condition that the matrices $U = [u_{i j}]$ and $F \overline{U} F^{-1}$ are unitary in
$M_N(\Pol(\Up_F))$. The structure of $\Up_F$ does not change under transformations of the form $F \rightarrow \lambda V F W$ for $\lambda \in \C^\times$ and $V, W \in \U(N)$.

Banica showed the following properties of $\Pol(\Up_F)$~\cite{Ban6}:

\begin{enumerate}
\item
The irreducible unitary representations of $\Up_F$ are labeled by the vertices of the rooted tree $\N * \N$.
\item
When $F \overline{F} \in \R I_N$, there is an inclusion of discrete quantum groups $\FU_F \rightarrow \Z * \FO_F$ given by the Hopf $*$-algebra homomorphism
\[
\Pol(\Up_F) \rightarrow \Pol(\U(1)) \rfrprd \Pol(\Op_F), \quad u_{i j} \mapsto z \cdot u_{i j}.
\]
Here, $z$ denotes the coordinate function on $\U(1) \subset \C$.
\end{enumerate}

In fact, a free unitary quantum group $\Up_F$ is always monoidally equivalent to another one, $\Up_Q$, with $Q \in M_2(\C)$ satisfying $Q \overline{Q} \in \R I_2$. Rescaling $F$ so that it
satisfies $\Tr(F F^*) = \Tr((F F^*)^{-1})$, the $Q$ can be taken as the off-diagonal matrix with coefficients $q^{\frac{1}{2}}$ and $q^{-\frac{1}{2}}$ characterized by $q + q^{-1} = \Tr(F
F^*)$~\cite{BDV1}*{Section~6}.

For a general matrix $F \in M_N(\C)$, Wang showed the following structural results~\cite{Wan3}: the free orthogonal quantum group $\FO_F$ admits a presentation as a free
product
\begin{equation}\label{eq:gen-free-orth-free-prod}
\FO_F \simeq \FU_{P_1} * \cdots * \FU_{P_m} * \FO_{Q_1} * \cdots * \FO_{Q_n},
\end{equation}
where $P_i$ and $Q_j$ are appropriate matrices of smaller size satisfying $Q_i \overline{Q}_i \in \R I_{N_i}$.

\subsection{Quantum automorphism groups}

Let $(B, \psi)$ be a pair consisting of a finite-dimensional C$^*$-algebra and a state on it. The \emph{compact quantum automorphism group} of $(B, \psi)$~\cites{Wan2,Ban5} is defined
as the
universal Hopf $*$-algebra $\Pol(\qAut(B, \psi))$ which can be endowed with a coaction
\[
\beta\colon B \rightarrow B \otimes \Pol(\qAut(B, \psi))
\]
such that $(\psi \otimes \iota) \circ \beta (x) = \psi(x) 1$ for $x \in B$.

Let $m\colon B \otimes B \rightarrow B$ be the product map. A state $\psi$ is said to be a $\delta$-form if $m \circ m^*$ is equal to $\delta \id_B$ for some $\delta > 0$, where
$m^*$ is the adjoint of $m$ with respect to the GNS-inner products associated with the states $\psi \otimes \psi$ on $B \otimes B$ and $\psi$ on $B$~\cite{Ban5}.

\begin{Prop}\label{PropQAutIsSOq3}
Assume that $\psi$ is a $\delta$-form on a finite-dimensional C$^*$-algebra $B$. Then, the quantum group $\qAut(B, \psi)$ is monoidally equivalent to $\SO_q(3)$ for the $q$ satisfying
$(q +
q^{-1}) = \delta$.
\end{Prop}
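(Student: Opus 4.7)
The plan is to invoke Woronowicz's Tannaka--Krein duality, which identifies monoidal equivalence of compact quantum groups with equivalence of their representation categories as rigid tensor C*-categories. The proof thus reduces to exhibiting a tensor C*-equivalence between $\Rep(\qAut(B, \psi))$ and $\Rep(\SO_q(3))$ whenever $q + q^{-1} = \delta$.

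First, I would invoke Banica's analysis of the intertwiner spaces of $\qAut(B, \psi)$. The defining coaction $\beta$ furnishes a fundamental unitary corepresentation $u$ on the GNS Hilbert space $L^2(B, \psi)$, and every morphism between tensor powers of $u$ is a linear combination of planar diagrams built out of the unit $\eta \colon \C \to B$, the multiplication $m \colon B \ot B \to B$, and their adjoints. The $\delta$-form relation $m \circ m^* = \delta \cdot \id_B$ is exactly the statement that a closed loop in the associated diagrammatic calculus evaluates to the scalar $\delta$. Hence these generators satisfy, and in fact generate, the Temperley--Lieb relations at loop parameter $\delta$, so the tensor category of intertwiners for $\qAut(B, \psi)$ is identified with $\mathrm{TL}(\delta)$.

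Next, I would recall that $\SO_q(3)$ is the quotient of $\SU_q(2)$ by its center, so its representation category is equivalent to the full tensor subcategory of $\Rep(\SU_q(2))$ spanned by the integer-spin corepresentations $U_k$, $k \in \N$. Since $\Rep(\SU_q(2))$ is itself Temperley--Lieb at loop parameter $q + q^{-1} = \delta$ on the fundamental object $U_{1/2}$, the integer-spin part inherits Temperley--Lieb type intertwiners at the same parameter. Choosing the appropriate generating object on the $\SO_q(3)$ side (corresponding under the equivalence to the nontrivial isotypic summand of $u$) yields a tensor C*-equivalence between $\Rep(\SO_q(3))$ and $\mathrm{TL}(\delta)$ as well.

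Composing these two identifications and invoking Woronowicz's Tannakian reconstruction would then produce the desired monoidal equivalence. The main obstacle in writing this out carefully is to verify that the abstract equivalence $\Rep(\qAut(B, \psi)) \simeq \Rep(\SO_q(3))$ coming from both sides being Temperley--Lieb is compatible with the $*$-structures and the normalizations of the duality morphisms, so that it lifts to an equivalence of rigid tensor C*-categories rather than merely of $\C$-linear rigid tensor categories. Once this compatibility is checked, the proposition follows at $q + q^{-1} = \delta$.
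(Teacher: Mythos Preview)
Your approach is essentially the one carried out in the references the paper cites (De~Rijdt--Vander~Vennet, Section~9.3, and So{\l}tan): identify the concrete rigid C$^*$-tensor category generated by the fundamental corepresentation of $\qAut(B,\psi)$, observe that it coincides with the integer-spin subcategory of $\Rep(\SU_q(2))$ when $q+q^{-1}=\delta$, and then invoke Tannaka--Krein. So the strategy is correct and matches the paper.

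There is, however, a genuine slip in your description of the intertwiner category. The morphisms between tensor powers of $u=L^2(B,\psi)$ are spanned by \emph{all} noncrossing partitions, not merely noncrossing pair partitions; in particular $\dim\mathrm{Hom}(1,u^{\otimes n})$ is the Catalan number $C_n$, not $C_{n/2}$. So the category is not $\mathrm{TL}(\delta)$ in the usual sense. What is true is that sending $u$ to $U_{1/2}^{\otimes 2}$ in $\Rep(\SU_q(2))$ (so that $\eta\leftrightarrow\cup$ and $m\leftrightarrow \id\otimes\cap\otimes\id$) identifies the intertwiner category of $\qAut(B,\psi)$ with the \emph{even part} of $\mathrm{TL}(q+q^{-1})$, and the relation $m m^*=\delta\cdot\id$ then corresponds to a single Temperley--Lieb bubble $(\id\otimes\cap\otimes\id)(\id\otimes\cup\otimes\id)=(q+q^{-1})\id$, giving exactly $\delta=q+q^{-1}$. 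You effectively use this in your second paragraph when you pass to ``the integer-spin part'' and ``the nontrivial isotypic summand of $u$'', so the argument survives; but the sentence asserting that the intertwiner category ``is identified with $\mathrm{TL}(\delta)$'' should be corrected. The compatibility with the C$^*$-structure that you flag as the main obstacle is handled in the cited references (Banica's description is already in the C$^*$-setting, and the C$^*$-structure on the even part of Temperley--Lieb is inherited from $\Rep(\SU_q(2))$).
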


\begin{proof}
This follows from the combination of \cite{DeR1}*{Section~9.3} and \cite{Sol1}.
\end{proof}

The following result is due to Brannan but did not appear in the litterature. We therefore include a proof for the sake of completeness.

\begin{Prop}[Brannan]\label{PropQauGenFreeProd}
Let $B$ be a finite-dimensional C$^*$-algebra and let $\psi$ be a state on it. Let $B = \oplus_{i = 1}^k B_i$ be the coarsest direct sum decomposition into C$^*$-algebras such that the normalization of $\psi |_{B_i}$ is a $\delta_i$-form for some $\delta_i$ for each summand. Then, $\widehat{\qAut}(B, \psi)$ is isomorphic to the free product $*_{i=1}^k
\widehat{\qAut}(B_i, \psi_i)$, where $\psi_i$ is the state on $B_i$ obtained by normalizing $\psi|_{B_i}$.
\end{Prop}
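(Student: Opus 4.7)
The strategy is to match the two Hopf $*$-algebras via the universal properties of $\qAut$ and of the free product of Hopf $*$-algebras, after reducing to the key observation that the defining coaction $\beta\colon B\to B\otimes\Pol(\qAut(B,\psi))$ restricts to a coaction on each summand $B_i$.

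To prove this restriction statement, I would observe that the multiplication $m\colon B\otimes B\to B$ is equivariant since $\beta$ is a $*$-algebra homomorphism, while its adjoint $m^*$ (for the GNS inner products from $\psi$ and $\psi\otimes\psi$) is equivariant since $\beta$ preserves $\psi$. Thus the endomorphism $m\circ m^*$ of $B$ commutes with $\beta$. A direct inner-product computation shows that $m\circ m^*$ acts on $B_i$ as the scalar $\delta_i/a_i$, where $a_i=\psi(1_{B_i})$ and $\delta_i$ is the constant of the normalized form $\psi_i$. The coarseness hypothesis is precisely that the ratios $\delta_i/a_i$ are pairwise distinct, since two summands with the same ratio would amalgamate into a single $\delta$-form block. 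Hence the $B_i$ are the distinct eigenspaces of the equivariant operator $m\circ m^*$, and therefore preserved by $\beta$.

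With this in hand, each restriction $\beta_i:=\beta|_{B_i}\colon B_i\to B_i\otimes\Pol(\qAut(B,\psi))$ is a $\psi_i$-preserving coaction, so by universality of $\qAut(B_i,\psi_i)$ there are Hopf $*$-algebra maps $\pi_i\colon\Pol(\qAut(B_i,\psi_i))\to\Pol(\qAut(B,\psi))$. The universal property of the free product of Hopf $*$-algebras assembles these into a single map $\pi\colon\frprd_{i=1}^{k}\Pol(\qAut(B_i,\psi_i))\to\Pol(\qAut(B,\psi))$, which is surjective because the matrix coefficients of $\beta$ are the union of those of the $\beta_i$. For injectivity I would build an inverse: viewing each $\beta_i$ instead as taking values in $B_i\otimes\Pol(\qAut(B_i,\psi_i))$ sitting inside the free product, their direct sum yields a $\psi$-preserving coaction $\tilde\beta\colon B\to B\otimes\bigl(\frprd_{i=1}^{k}\Pol(\qAut(B_i,\psi_i))\bigr)$, and universality of $\qAut(B,\psi)$ produces a Hopf $*$-algebra map $\tilde\pi$ in the opposite direction. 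Both compositions $\pi\circ\tilde\pi$ and $\tilde\pi\circ\pi$ intertwine the relevant coactions, so by the uniqueness clause in the respective universal properties they must be the identity.

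The main obstacle is the summand-preservation step; once it is secured, the rest is a routine universal-property chase. The decisive input is that the coarseness hypothesis matches precisely the spectral decomposition of the canonical equivariant operator $m\circ m^*$, so that $\beta$ has no opportunity to mix distinct $B_i$.
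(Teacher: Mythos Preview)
Your proposal is correct and follows essentially the same route as the paper's proof: both hinge on the observation that $m\circ m^*$ is an intertwiner whose eigenspaces are exactly the $B_i$ (by the coarseness hypothesis), so the defining coaction restricts to each summand, after which the universal properties of $\qAut$ and of the free product give mutually inverse Hopf $*$-algebra maps. Your write-up is in fact slightly more explicit than the paper's, in that you spell out the eigenvalue $\delta_i/a_i$ and justify the equivariance of both $m$ and $m^*$ separately.
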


\begin{Rem} Note that the restricted normalization of $\psi$ to each simple block of $B$ automatically gives some $\delta$-form, hence the above direct sum decomposition exists.
\end{Rem}

\begin{proof}
For each $i$, put $H_i = L^2(B_i, \psi_i)$. Similarly, put $H = L^2(B, \psi)$. The assumption that $\oplus_i B_i$ is a coarsest decomposition as in the assertion implies that the eigenvalues of $m \circ m^*$ on $H_i$ are different for distinct
$i$. As $m$ is an intertwiner, it follows that $H$ decomposes into a direct sum of the $H_i$ as a
representation of $\qAut(B, \psi)$. For each $i$, the coaction $H_i \rightarrow H_i \otimes \Pol(\qAut(B, \psi))$ is given by an algebra homomorphism preserving the state $\psi_i$.
It follows that there is a Hopf $*$-algebra homomorphism $\Pol(\qAut(B_i, \psi_i)) \rightarrow \Pol(\qAut(B, \psi))$ covering this coaction. Then, the direct sum $H$ admits a
coaction $\beta'$ of $\Pol(\qAut(B_1, \psi_1)) \frprd_r \cdots \frprd_r \Pol(\qAut(B_k, \psi_k))$. By the universality of free product construction, one obtains a homomorphism
\[
 \Pol(\qAut(B_1, \psi_1)) \frprd_r \cdots \frprd_r \Pol(\qAut(B_k, \psi_k)) \rightarrow \Pol(\qAut(B, \psi)).
\]

Conversely, the coaction $\beta'$ is implemented by a $\psi$-preserving $*$-homomorphism. Thus, it induces a Hopf $*$-algebra homomorphism
\[
\Pol(\qAut(B, \psi)) \rightarrow \Pol(\qAut(B_1, \psi_1)) \frprd_r \cdots \frprd_r \Pol(\qAut(B_k, \psi_k)),
\]
which is inverse to the above homomorphism.
\end{proof}

\section{Approximation properties for free quantum groups}

Let $\G$ be a compact quantum group and let $\widehat{\qH}$ be a discrete quantum subgroup of $\widehat{\G}$. Since the complete boundedness and complete positivity of central
multiplier
operators on $\Pol(\G)$ pass to the subalgebra $\Pol(\qH)$, one has the following lemma.

\begin{Lem}\label{Lem:ACPAP-pass-to-subgr}
The central ACPAP is preserved under taking discrete quantum subgroups.
\end{Lem}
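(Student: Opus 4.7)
The plan is to restrict the witnessing central functionals from $\Pol(\G)$ to $\Pol(\qH)$: given a net $(\psi_t)_{t \in I}$ of central functionals on $\Pol(\G)$ witnessing the central ACPAP of $\widehat{\G}$, set $\psi_t^{\qH} = \psi_t|_{\Pol(\qH)}$ and verify the three conditions of Definition~\ref{DefACPAP} for $\widehat{\qH}$ with this restricted net. Centrality of each $\psi_t^{\qH}$ is immediate because $\Pol(\qH)$ is a Hopf $*$-subalgebra: for $x \in \Pol(\qH)$ one has $\Delta(x) \in \Pol(\qH) \otimes \Pol(\qH)$, so the centrality identity $\psi_t(x_{(1)}) x_{(2)} = \psi_t(x_{(2)}) x_{(1)}$ inherited from $\psi_t$ remains valid inside $\Pol(\qH)$. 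Consequently, $T_{\psi_t^{\qH}}$ on $\Pol(\qH)$ is simply the restriction of $T_{\psi_t}$ on $\Pol(\G)$.

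For condition~\eqref{eq:ACPAP-cpness}, I use the fact that the Haar state of $\Pol(\qH)$ is the restriction of the Haar state of $\Pol(\G)$; this yields a canonical isometric embedding $C_r(\qH) \hookrightarrow C_r(\G)$. Since $\Pol(\qH)$ is the linear span of matrix coefficients of representations in a subset $\Irr(\qH) \subset \Irr(\G)$ and $T_{\psi_t}$ acts by a scalar on each isotypical subspace, $T_{\psi_t}$ preserves $\Pol(\qH)$ and hence, by boundedness and density, its reduced closure $C_r(\qH)$. The restriction is then a ucp map on $C_r(\qH)$ agreeing with $T_{\psi_t^{\qH}}$ on the dense subalgebra, which settles~\eqref{eq:ACPAP-cpness}. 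Condition~\eqref{eq:ACPAP-approxid} is immediate from $(\psi_t^{\qH})_\pi = (\psi_t)_\pi \to 1$ for every $\pi \in \Irr(\qH) \subset \Irr(\G)$.

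For condition~\eqref{eq:ACPAP-finapprox}, given a finitely supported central multiplier $T_\omega$ approximating $T_{\psi_t}$ to within cb-error $\varepsilon$ on $C_r(\G)$, the functional $\omega|_{\Pol(\qH)}$ is again finitely supported, with spectral support $\Irr(\qH) \cap \mathrm{supp}(\omega)$. The same restriction principle shows that $T_{\omega|_{\Pol(\qH)}}$ approximates $T_{\psi_t^{\qH}}$ to within the same cb-error on $C_r(\qH)$, since the cb-norm of a map on $C_r(\G)$ that preserves the subalgebra $C_r(\qH)$ dominates the cb-norm of its restriction. There is honestly no real obstacle here: the entire content of the lemma is the single observation, already foreshadowed in the sentence preceding its statement, that central multipliers descend along $C_r(\qH) \hookrightarrow C_r(\G)$, and everything else amounts to bookkeeping against Definition~\ref{DefACPAP}.
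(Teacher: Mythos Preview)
Your proof is correct and follows exactly the approach the paper intends: the paper gives no separate proof for this lemma, relying entirely on the sentence preceding it (that complete boundedness and complete positivity of central multipliers pass to $\Pol(\qH)$), and you have simply unpacked that observation against the three conditions of Definition~\ref{DefACPAP}. Your closing remark accurately identifies that the whole content is the restriction principle for central multipliers along $C_r(\qH) \hookrightarrow C_r(\G)$.
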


A little more involved is the permanence of under free products, which is shown by a direct adaptation of a result due to Ricard and Xu~\cite{Ric1}.

\begin{Prop}\label{Prop:ACPAP-stable-under-free-prod}
The central ACPAP is preserved under taking free products.
\end{Prop}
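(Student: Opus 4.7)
The plan is to adapt the free-product technique of Ricard--Xu to the central setting. Let $(\psi^{(i)}_{t})_{t \in I_i}$ for $i = 1, 2$ be nets implementing the central ACPAP for $\widehat{\G}_i$, with associated ucp central multipliers $\Psi^{(i)}_t = T_{\psi^{(i)}_t}$ on $C_r(\G_i)$. Write $\G$ for the compact quantum group with $\Pol(\G) = \Pol(\G_1) \frprd_r \Pol(\G_2)$, so that $\widehat{\G} = \widehat{\G}_1 \frprd \widehat{\G}_2$. Indexing by $s = (s_1, s_2) \in I_1 \times I_2$, the first step is to define a central functional $\psi_s$ on $\Pol(\G)$ by
\[
(\psi_s)_{\pi_1 \otimes \cdots \otimes \pi_n} = \prod_{k=1}^n \bigl(\psi^{(j_k)}_{s_{j_k}}\bigr)_{\pi_k}
\]
on every alternating word as in~\eqref{eq:irrep-free-prod}, with $(\psi_s)_1 = 1$.

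Next I would identify $\Psi_s = T_{\psi_s}$ with the free product of $\Psi^{(1)}_{s_1}$ and $\Psi^{(2)}_{s_2}$ in the sense of Boca: on the spectral subspace $\Pol(\G_{j_1})_{\pi_1} \otimes \cdots \otimes \Pol(\G_{j_n})_{\pi_n}$ described just before~\eqref{eq:irrep-free-prod}, both maps act by multiplication by the scalar displayed above. Boca's theorem then yields that $\Psi_s$ extends to a Haar-state-preserving ucp map on $C_r(\G) = C_r(\G_1) \rfrprd C_r(\G_2)$, establishing condition~\eqref{eq:ACPAP-cpness}. Condition~\eqref{eq:ACPAP-approxid} would follow because for any fixed alternating word the product formula involves only finitely many factors, each of which tends to $1$ as $s_1, s_2$ progress along their nets.

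The hard step is condition~\eqref{eq:ACPAP-finapprox}. For each $i$, I would choose finitely supported central multipliers $F^{(i)}_n$ on $\Pol(\G_i)$ with $\bigl\|F^{(i)}_n - \Psi^{(i)}_{s_i}\bigr\|_\cb < 1/n$, and form their analogous free product $F_n$, namely the central multiplier on $\Pol(\G)$ with $(F_n)_{\pi_1 \otimes \cdots \otimes \pi_m} = \prod_k (F^{(j_k)}_n)_{\pi_k}$. Since each $F^{(i)}_n$ has finite support in $\Irr(\G_i)$, only finitely many alternating words contribute, so $F_n$ is automatically a finitely supported central multiplier. The main obstacle is establishing $\|F_n - \Psi_s\|_\cb \to 0$: a naive triangle inequality diverges because the free product of maps mixes infinitely many word-length components. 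Here I would invoke the Ricard--Xu decomposition of cb maps on reduced free products (via centering of each letter and iterated conditional expectations onto the alternating word blocks), which yields an estimate of $\|F_n - \Psi_s\|_\cb$ that is linear in $\max_i \bigl\|F^{(i)}_n - \Psi^{(i)}_{s_i}\bigr\|_\cb$. That argument is insensitive to whether the approximating maps are central, and it transfers to our setting once one observes that the free product operation above preserves centrality, completing the plan.
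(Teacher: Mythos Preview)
Your plan matches the paper up through the construction of the free-product ucp multipliers $\Psi_s = \Psi^{(1)}_{s_1} * \Psi^{(2)}_{s_2}$ and the verification of conditions~\eqref{eq:ACPAP-cpness} and~\eqref{eq:ACPAP-approxid}. The gap is in your treatment of condition~\eqref{eq:ACPAP-finapprox}. The Ricard--Xu estimates (\cite{Ric1}*{Lemma~4.10}) do \emph{not} give a bound on $\|F_n - \Psi_s\|_\cb$ that is linear in $\epsilon = \max_i \|F^{(i)}_n - \Psi^{(i)}_{s_i}\|_\cb$: they only control the restriction to each word-length block $\reg{A}_d$, and the constant there grows like $4 d\, 2^d$. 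Since you must sum over all $d \in \N$ to pass to the full cb-norm, the resulting series $\sum_{d \geq 1} 4 d\, 2^d\, \epsilon$ diverges, and your argument breaks down. There is no centrality miracle that rescues this; the obstruction is purely at the operator-space level.

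The paper's remedy is to insert an additional geometric damping before attempting the approximation. Ricard--Xu provide a family of ucp maps $T_r = \sum_{d \geq 0} r^d P_d$ on $C_r(\G_1) \rfrprd C_r(\G_2)$ for $0 \leq r < 1$, where $P_d$ is the projection onto $\reg{A}_d$; these are central multipliers and satisfy the tail bound $\bigl\|\sum_{d > N} r^d P_d\bigr\|_\cb \leq 4 N r^N (1-r)^{-2}$. One then works with the enlarged net $((\Psi^{(1)}_{s_1} * \Psi^{(2)}_{s_2}) T_r)_{s,r}$. For fixed $r$, the tail bound lets you truncate to $\sum_{d \leq N} r^d (\Psi^{(1)}_{s_1} * \Psi^{(2)}_{s_2}) P_d$ up to a prescribed error; on that \emph{finite} sum the Ricard--Xu blockwise bound $4 d\, 2^d\, \epsilon$ becomes harmless once $\epsilon$ is chosen small depending on $N$, and the finitely supported $F^{(i)}_n$ then give a finitely supported central multiplier approximating the whole. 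Your $F_n$ alone, without the $T_r$ factor, cannot do this job.
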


\begin{proof}
We follow the argument of~\cite{Ric1}*{Proposition~4.11}. Let $\widehat{\G}$ and $\widehat{\qH}$ be discrete quantum groups with the central ACPAP, and $(\Phi_i)_{i \in I}$,
$(\Psi_j)_{j \in J}$ be
corresponding nets of central ucp multipliers. Since each of the $\Phi_i$ (resp.\@ $\Psi_j$) is a unital multiplier on $\widehat{\G}$ (resp.\@ on $\widehat{\qH}$), it preserves the Haar state.  Thus, we obtain a free product ucp map $\Phi_i * \Psi_j$~\cite{Bla1} on $C_r(\G) \frprd_r C_r(\qH)$ For any pair $(i, j) \in I \times J$.

Let $\reg{A} = \Pol(\G) \frprd_r \Pol(\qH)$. For each $d \in \N$, let $\reg{A}_d$ be the homogeneous part of degree $d$ in $\reg{A}$ (that is, the direct sum of all the $d$-fold
alternating tensor products of $\Pol(\G)^\circ$ and $\Pol(\qH)^\circ$). Let $P_d$ be the projection from $\reg{A}$ onto $\reg{A}_d$. From~\cite{Ric1}*{Section~3}, we know that the map
$T_r = \sum r^d P_d$ is ucp for $0 \leqslant r < 1$, and that one has the estimate
\[
\norm{\sum_{d = n + 1}^\infty r^d P_d}_\cb \leqslant \frac{4 n r^n}{(1 - r)^2}.
\]
We claim that a subnet $((\Phi_i * \Psi_j) T_r)_{(i, j, r) \in I \times J \times \R_+}$ satisfies the desired conditions in Definition~\ref{DefACPAP}.

Since both of $(\Phi_i * \Psi_j)$ and $T_r$ are given by a central ucp multipliers of $\widehat{\G} * \widehat{\qH}$, so is their composition $(\Phi_i * \Psi_j) T_r$. Thus, it is enough to show that it can be approximated by finite rank
central
multipliers in the cb-norm. For simplicity, we write $\Phi = \Phi_i$, $\Psi = \Psi_i$, and fix an error tolerance $\delta > 0$.

Letting $r = 1 - \frac{1}{\sqrt{N}}$, we can find a large enough $N$ such that $(4 N r^N) (1 - r)^{-2} < \delta$. Then,
\[
E_N = (\Phi * \Psi) \left(\sum_{d = 0}^N r^d P_d\right)
\]
satisfies $\norm{(\Phi * \Psi) T_r - E_N}_\cb < \delta$.

For any $\epsilon > 0$, we can choose a finite rank central multiplier $\Phi^0$ on $\widehat{\G}$, which is $\epsilon$-close to $\Phi$ in the cb-norm. Then, the norms of $\Phi -
\Phi^0$
in $B(L^2(\Pol(\G), \varphi))$ and $B(L^2(\Pol(\G)^\opos, \varphi))$ are also bounded by $\epsilon$. Similarly, we choose a finite rank central multiplier $\Psi^0$ on $\widehat{\qH}$
with the same condition.  Applying~\cite{Ric1}*{Lemma~4.10} to $T_{1, k} = \Phi_0$ and $T_{2, k} = \Psi_0$ (independent of $k$), one sees that the cb-norm of $\Phi^0 * \Psi^0$ is bounded by $(2 d + 1) (1 + \epsilon)^d$ on $\reg{A}_d$. Then, $D_N = \sum_{d \leqslant N} r^d (\Phi^0 *
\Psi^0) P_d$ is finite-rank, completely bounded and is implemented by a central multiplier. The cb-norm of $E_N - D_N$ is bounded by
\[
\sum_{d=1}^N 4 d 2^d\epsilon
\]
by \cite{Ric1}*{proof of Proposition~4.11}. Thus, if we choose $\epsilon$ small enough (depending on $N$), we can make $\norm{(\Phi * \Psi) T_r - D_N}_\cb$ smaller than $2
\delta$. This proves the assertion.
\end{proof}

Using the above results, we can extend Theorem~\ref{Thm:AoF-central-approx} to the following universal quantum groups.

\begin{Theorem}\label{Thm:free-q-cent-Hage}
When $\G$ is one of the following compact quantum groups, $\widehat{\G}$ has the central ACPAP.
\begin{enumerate}
\item $\SU_{q}(2)$ or $\SO_{q}(3)$ for any $q\in [-1, 1]\setminus\{0\}$,
\item $\Op_F$ or $\Up_F$ for any matrix $F$,
\item $\qAut(B, \psi)$, for any finite-dimensional C$^*$-algebra $B$ and any state $\psi$.
\end{enumerate}
\end{Theorem}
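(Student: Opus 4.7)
The plan is to dispatch each case by combining Theorem~\ref{Thm:AoF-central-approx} with the three permanence properties for the central ACPAP established so far: invariance under monoidal equivalence (Proposition~\ref{PropMon}), passage to discrete quantum subgroups (Lemma~\ref{Lem:ACPAP-pass-to-subgr}), and stability under free products (Proposition~\ref{Prop:ACPAP-stable-under-free-prod}). One preliminary base case I need is the classical discrete group $\Z$: the Fej\'er kernels supply a net of finitely supported, positive-definite central functions on $\Z$ converging pointwise to $1$, so $\Z$ trivially has the central ACPAP.

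For (i), I would choose $F$ with $F\overline{F}=\pm I_N$ and $q+q^{-1}=\mp\Tr(F^*F)$, so that $\Op_F$ is monoidally equivalent to $\SU_q(2)$; combining Theorem~\ref{Thm:AoF-central-approx} with Proposition~\ref{PropMon} transfers the central ACPAP from $\FO_F$ to $\widehat{\SU_q(2)}$. For $\SO_q(3)$, I would note that $\Pol(\SO_q(3))$ sits inside $\Pol(\SU_q(2))$ as the Hopf $*$-subalgebra spanned by matrix coefficients of the integer-spin representations, so $\widehat{\SO_q(3)}$ is a discrete quantum subgroup of $\widehat{\SU_q(2)}$, and Lemma~\ref{Lem:ACPAP-pass-to-subgr} finishes (i).

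For (ii), I would handle $\Up_F$ first, since it feeds into the general $\Op_F$ case. By the reduction recalled in Subsection~5.2, $\Up_F$ is monoidally equivalent to some $\Up_Q$ with $Q\overline{Q}\in \R I_2$, and Proposition~\ref{PropMon} lets me assume this. Banica's inclusion then realizes $\FU_Q$ as a discrete quantum subgroup of $\Z*\FO_Q$. The factor $\Z$ has the central ACPAP by the base case and $\FO_Q$ does by Theorem~\ref{Thm:AoF-central-approx}, so Proposition~\ref{Prop:ACPAP-stable-under-free-prod} followed by Lemma~\ref{Lem:ACPAP-pass-to-subgr} concludes the $\Up_F$ case. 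For an arbitrary $\Op_F$, Wang's decomposition~\eqref{eq:gen-free-orth-free-prod} presents $\FO_F$ as a free product of factors of the form $\FU_{P_i}$ and $\FO_{Q_j}$ with $Q_j\overline{Q}_j\in\R I$; each factor has the central ACPAP by what precedes and by Theorem~\ref{Thm:AoF-central-approx}, and Proposition~\ref{Prop:ACPAP-stable-under-free-prod} completes (ii).

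For (iii), Proposition~\ref{PropQauGenFreeProd} decomposes $\widehat{\qAut}(B,\psi)$ as the free product of the $\widehat{\qAut}(B_i,\psi_i)$ over the simple blocks where $\psi_i$ is a $\delta_i$-form, and Proposition~\ref{PropQAutIsSOq3} identifies each factor as monoidally equivalent to some $\SO_{q_i}(3)$, which is already handled by (i) and Proposition~\ref{PropMon}. A final appeal to Proposition~\ref{Prop:ACPAP-stable-under-free-prod} yields the central ACPAP for $\widehat{\qAut}(B,\psi)$. No step here is a genuine obstacle, since the only analytic content is packaged in Theorem~\ref{Thm:AoF-central-approx}; the structural reduction for $\Up_F$ via the embedding into $\Z*\FO_Q$ is the least formal ingredient, but it is already available off the shelf from Banica's and Wang's work.
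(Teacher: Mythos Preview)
Your proof is correct and follows essentially the same route as the paper's: Theorem~\ref{Thm:AoF-central-approx} for the base case, then monoidal equivalence, passage to discrete quantum subgroups, and stability under free products to reach all the listed quantum groups in the same order. One tiny imprecision: in case (iii) the blocks $B_i$ from Proposition~\ref{PropQauGenFreeProd} are those of the \emph{coarsest} decomposition for which each $\psi_i$ is a $\delta_i$-form, not necessarily the simple blocks, but this does not affect the argument.
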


\begin{proof}
The case of $\SU_q(2)$, and more generally of the $\Op_F$ with $F \overline{F} \in \R I_N$, are already proved in Theorem~\ref{Thm:AoF-central-approx}. By
Lemma~\ref{Lem:ACPAP-pass-to-subgr}, we obtain the case for $\SO_q(3)$.

Since $\Z = \widehat{\U(1)}$ is amenable, Proposition~\ref{Prop:ACPAP-stable-under-free-prod} implies that $\Z * \FO_F$ has the central ACPAP when $F$ satisfies $F
\overline{F} \in \R I_N$. Again by Lemma~\ref{Lem:ACPAP-pass-to-subgr}, we obtain the central ACPAP for $\FU_F$ with such an $F$.

For a general $\FU_F$, the central ACPAP follows from the monoidal equivalence with $\FU_G$ for a suitable chosen $G$ satisfying $G \overline{G} \in \R I_N$. Then,
Proposition~\ref{Prop:ACPAP-stable-under-free-prod} and the decomposition~\eqref{eq:gen-free-orth-free-prod} implies the case for a general $\FO_F$.

Similarly, the central ACPAP for $\qAut(B, \psi)$ when $\psi$ is a $\delta$-form follows from the monoidal equivalence with $\SO_q(3)$. Then, the general case follows from
Propositions~\ref{PropQauGenFreeProd} and~\ref{Prop:ACPAP-stable-under-free-prod}.
\end{proof}

Consequently, for the above quantum groups, the reduced C$^*$-algebra $C^*_r(\widehat{\G})$ and the von Neumann algebra $\vN(\widehat{\G})$ have the Haagerup property relative to the Haar state, and the (W*)CCAP.  Note that this was known as for the quantum groups above which are monoidally equivalent to a quantum group of Kac-type, by \cite{Bra1}, \cite{Bra2} and \cite{Fre1}. We can now answer in the affirmative the question on (W$^*$)CCAP raised at the end of Section~1 in~\cite{Iso1}. Recall that a \emph{Cartan subalgebra} in a von Neumann subalgebra is a maximal commutative
subalgebra whose normalizer generates the whole von Neumann algebra.  Now, Theorem~\ref{Thm:free-q-cent-Hage} and the results of~\cite{Iso2} implies the following structure result.

\begin{Prop}[\cite{Iso2}*{Corollary~C}]
Any of the following von Neumann algebras has no Cartan subalgebras if it is noninjective.
\begin{enumerate}
\item
$\vN(\FU_F)$, for any matrix invertible $F \in M_N(\C)$,
\item
$\vN(\FO_F)$, for $F$ satisfying $F \overline{F} \in \R I_N$,
\item
$\vN^\infty(\qAut(B, \psi))$, for a $\delta$-form $\psi$ on a finite-dimensional C$^*$-algebra $B$.
\end{enumerate}
\end{Prop}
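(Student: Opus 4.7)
The proof plan is to combine Theorem~\ref{Thm:free-q-cent-Hage} with the deformation/rigidity machinery of Isono~\cite{Iso2}. The latter provides a general criterion of the following shape: if a von Neumann algebra associated with a discrete quantum group is noninjective and enjoys both the W$^*$CCAP and a suitable bi-exactness (strengthened Akemann--Ostrand) property, then it contains no Cartan subalgebra. With that framework in place, our role in the proof is simply to supply the two hypotheses in each of the three cases.

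The first step is to establish the W$^*$CCAP for each of the three families of von Neumann algebras. By Theorem~\ref{Thm:free-q-cent-Hage}, each of $\widehat{\FU_F}$, $\widehat{\FO_F}$ (with $F\overline{F}\in \R I_N$), and $\widehat{\qAut}(B,\psi)$ has the central ACPAP. The passage from central ACPAP to (W$^*$)CCAP was already recorded after Definition~\ref{DefACPAP}: the normalized finite-rank central multipliers approximating the identity in cb-norm give rise to a net of normal $\varphi$-preserving complete contractions that converges to the identity in the pointwise $\sigma$-weak topology on $\vN^\infty(\G)$. This handles the approximation property input uniformly in all three cases.

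The second step is to supply the bi-exactness input. For $\vN(\FO_F)$ with $F\overline{F}\in \R I_N$, the Akemann--Ostrand property of $C^*_r(\FO_F)$ was established by Vaes and Vergnioux~\cite{Vae1} and strengthened by Isono~\cite{Iso1}, which already yielded the absence of Cartan subalgebras under W$^*$CCAP in that case. For the remaining two cases we rely on Isono's permanence results: bi-exactness passes to discrete quantum subgroups and to free products of discrete quantum groups. Combining these with the structural decompositions recalled in Section~\ref{Sec:compl-free-qgrp} --- the embedding $\FU_F \hookrightarrow \Z \ast \FO_G$ for a suitable $G$ with $G\overline{G}\in\R I_2$, and the free product decomposition of $\qAut(B,\psi)$ into $\qAut(B_i,\psi_i)$ whose duals are monoidally equivalent (and therefore have the same categorical data) to $\SO_{q_i}(3) \subseteq \SU_{q_i}(2)$ --- reduces the bi-exactness of the three families to the $\FO_F$ case already known.

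With both hypotheses verified, an application of \cite{Iso2}*{Corollary~C} yields the conclusion. The only subtle point is that for the $\FU_F$ case one wants the noninjective $\vN(\FU_F)$ to be a factor in order to talk meaningfully about the uniqueness/absence of Cartan subalgebras; this is precisely what the appendix by Vaes secures. The principal obstacle in adapting the argument is not in the present paper but in \cite{Iso2}, namely checking that bi-exactness is genuinely preserved under free products and subgroups in the quantum setting and matches the hypothesis of Isono's Cartan-rigidity theorem; granted these inputs, our contribution here is exactly the approximation property from Theorem~\ref{Thm:free-q-cent-Hage}.
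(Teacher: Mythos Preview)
Your approach is essentially the same as the paper's: the paper does not give a standalone proof here but simply records that the proposition follows by feeding the W$^*$CCAP from Theorem~\ref{Thm:free-q-cent-Hage} into Isono's Cartan-rigidity machinery \cite{Iso2}*{Corollary~C}, and your write-up is a correct elaboration of exactly that route.

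One small correction: your remark that the Vaes appendix is needed here to secure factoriality is misplaced. The notion of Cartan subalgebra does not require factoriality, and the paper does not invoke the appendix for this proposition; rather, the appendix is used in the \emph{subsequent} Corollary to drop the noninjectivity hypothesis for $\vN(\FU_F)$ (fullness giving noninjectivity). Also note a notational slip: $\FU_F$ and $\FO_F$ already denote the discrete duals, so writing $\widehat{\FU_F}$ is redundant.
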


We note that various related structural results on these von Neumann algebras are previously known; in particular, the (genaralized) solidity which follows from the exactness and the Akemann--Ostrand property was already known for the free unitary quantum groups, the free orthogonal quantum groups~\citelist{\cite{Ver1}\cite{Vae1}}, and the Kac-type quantum automorphism groups~\cite{Bra2}.

In fact, the non-injectivity assumption is redundant for the case of $\FU_F$, as follows from Theorem \ref{ThmNonInj}, a result due to S. Vaes. Let us record this fact as a separate corollary.

\begin{Cor}
Let $F \in M_N(\C)$ be an invertible matrix. Then the von Neumann algebra $\vN(\FU_{F})$ has no Cartan subalgebra.
\end{Cor}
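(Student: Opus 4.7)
The plan is to combine the preceding Proposition with the fullness result of Theorem~\ref{ThmNonInj}, which together imply the claim with essentially no additional work.

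First, I would note that the preceding Proposition already asserts that $\vN(\FU_F)$ has no Cartan subalgebra whenever it is non-injective. Its (W*)CCAP hypothesis is supplied for all invertible $F$ by Theorem~\ref{Thm:free-q-cent-Hage}, so this has been established earlier in the paper. Consequently, the entire task reduces to verifying that $\vN(\FU_F)$ is non-injective for every invertible $F \in M_N(\C)$, without the need for any direct spectral or combinatorial computation inside the corollary itself.

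Second, I would invoke Theorem~\ref{ThmNonInj} from the appendix by S.~Vaes, which establishes that $\vN(\FU_F)$ is a full factor. From this I would deduce non-injectivity via the standard fact that a diffuse full factor cannot be injective: by Connes' theorem every injective factor is approximately finite-dimensional, and each non-type-I AFD factor admits non-trivial asymptotic centralizers (property~$\Gamma$ in type II$_1$, and its modular counterparts in the type II$_\infty$ and III settings), which is incompatible with fullness. Since $\vN(\FU_F)$ is diffuse whenever $N \geqslant 2$ (it contains, for instance, the group von Neumann algebra of a nonabelian free group via the embeddings used in Section~\ref{Sec:compl-free-qgrp}), fullness rules out injectivity in all cases we care about.

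Third, feeding the resulting non-injectivity into the preceding Proposition closes the argument. The main (and in fact only) non-routine obstacle is the fullness statement of Theorem~\ref{ThmNonInj}, whose proof is the content of the appendix; everything else amounts to an explicit citation and a formal deduction, and I would organize the written corollary as a two-line argument pointing to these two inputs.
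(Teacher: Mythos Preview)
Your approach is the same as the paper's: combine the preceding Proposition with Theorem~\ref{ThmNonInj}. However, you take an unnecessary detour. Theorem~\ref{ThmNonInj} does not merely assert fullness; its statement is that $L(\FU_F)$ is a full \emph{and non-injective} factor, and its proof explicitly records non-injectivity. The paper therefore deduces the corollary in one line by citing non-injectivity directly from Theorem~\ref{ThmNonInj} and feeding it into the preceding Proposition.

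Your added step ``full $+$ diffuse $\Rightarrow$ non-injective'' is not only superfluous but also has a soft spot: the justification ``$\vN(\FU_F)$ contains $L(\F_2)$'' does not by itself yield diffuseness (every separable von Neumann algebra, $L(\F_2)$ included, embeds into $B(H)$, which is type~I), nor does it directly yield non-injectivity unless you also produce a normal conditional expectation onto the copy of $L(\F_2)$, which the embeddings in Section~\ref{Sec:compl-free-qgrp} do not obviously provide in that direction. None of this matters once you read off non-injectivity from the statement of Theorem~\ref{ThmNonInj}.
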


Note also that for a matrix $F$ with $F \overline{F} \not\in \R I_N$, the quantum group $\FO_F$ is a nontrivial free product, and then~\cite{BHR1}*{Theorem~A} already tells us that there is no Cartan subalgebra in $\vN(\FO_F)$.  When the size of $F$ is $2$, $\vN(\FU^+_F)$ is isomorphic to a free Araki--Woods factor~\cite{DeC1} and does not have a Cartan subalgebra by~\cite{Hou1}.

The von Neumann algebra $L(\FO_F)$ is known to be full, hence noninjective, when $F$ satisfies $\norm{F}^2 \leqslant \Tr(F F^*) / \sqrt{5}$~\cite{Vae1}.  However, in the full
generality the noninjectivity of $L(\FO_F)$ seems to be unsettled (note that the problem of whether the injectivity implies the coamenability of a compact quantum
group is still open).  For now, let us give just another sufficient condition for the noninjectivity of free orthogonal quantum groups.

\begin{Prop}\label{PropNoninjFreeQGrps}
Let $F$ be a matrix of size $N$ satisfying $\Tr(F F^*) = \Tr((F F^*)^{-1})$.  The von Neumann algebra $\vN(\FO_F)$ is not injective if $F$ satisfies $N^2 > \Tr(F
F^*)+2$.
\end{Prop}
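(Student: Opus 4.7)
The plan is to assume that $\vN(\FO_F)$ is injective and derive a contradiction by carefully estimating the operator norm of the character of the fundamental corepresentation against a Kesten-type lower bound coming from injectivity.

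First, I would fix the character $\chi_1 = \sum_{i=1}^{N} u_{ii} \in C_r(\Op_F)$ of the fundamental $N$-dimensional corepresentation $U = [u_{ij}]$. The defining relation $U = F\overline{U}F^{-1}$ implies that $\chi_1 = \chi_1^*$. Since the representation category of $\Op_F$ matches that of $\SU_q(2)$ with $q + q^{-1}$ of absolute value $\Tr(FF^*)$, the Clebsch--Gordan rule $u_1 \otimes u_d \cong u_{d-1} \oplus u_{d+1}$ yields the Chebyshev recurrence $\chi_1 \chi_d = \chi_{d-1} + \chi_{d+1}$ on the irreducible characters $\chi_d$. Combined with the Haar-orthogonality $\varphi(\chi_d \chi_e) = \delta_{d, e}$, this identifies the spectral distribution of $\chi_1$ under $\varphi$ with the standard semicircle law on $[-2, 2]$, so that $\|\chi_1\|_{C_r(\Op_F)} = 2$.

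Next, assuming $\vN(\FO_F)$ is injective, I would produce a Tomiyama-type conditional expectation $E\colon B(L^2(\Op_F, \varphi)) \to \vN(\FO_F)$ and feed it the fundamental corepresentation $U \in M_N(\C) \otimes \vN(\FO_F)$. Using the unitarity of $U$ together with the equivariance properties of $E$ and the orthogonality relations from Section~\ref{sec:prelim}, the combination of $E$ with the matrix trace on $M_N$ produces a unital completely positive map whose moments can be compared directly to those of $\chi_1$ in terms of the classical dimension $\dim(u_1) = N$ and the quantum dimension $\dim_q(u_1) = \Tr(FF^*)$. The symmetrizing normalization $\Tr(FF^*) = \Tr((FF^*)^{-1})$ is used precisely here, so that the contributions of the Woronowicz character $Q$ and its inverse collapse to the single quantity $\Tr(FF^*)$. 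The outcome is an inequality of the shape $N^2 \leq \|\chi_1\|^2 + \Tr(FF^*) - 2$.

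Combining the two steps gives $N^2 \leq 4 + \Tr(FF^*) - 2 = \Tr(FF^*) + 2$, which directly contradicts the standing hypothesis $N^2 > \Tr(FF^*) + 2$. The main obstacle is the second step: setting up the Kesten-type estimate in the non-Kac setting so that the quantum dimension enters with the correct weight $\Tr(FF^*)$ rather than, say, $\Tr(FF^*)^2$ or $N$. In the Kac case $F = I_N$ this reduces to the classical noncommutative Kesten bound for the semicircular element, but in the non-Kac case one has to keep careful track of how the modular automorphism group of $\varphi$ interacts with the expectation $E$, and it is exactly the normalization $\Tr(FF^*) = \Tr((FF^*)^{-1})$ that makes the relevant $Q$-traces match and produces the additive constant $-2$.
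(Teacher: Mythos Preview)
Your first step is fine and matches the paper: the character $\chi_1$ of the fundamental corepresentation is self-adjoint, the Clebsch--Gordan rules give the Chebyshev recursion $\chi_1\chi_d=\chi_{d-1}+\chi_{d+1}$, and combined with $\varphi(\chi_d\chi_e)=\delta_{d,e}$ one obtains $\|\chi_1\|_{C_r(\Op_F)}=2$. This is exactly Banica's computation that the paper invokes.

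The problem is your second step. You assert that feeding the fundamental corepresentation into a Tomiyama expectation and ``comparing moments'' yields an inequality of the shape $N^2\leq\|\chi_1\|^2+\Tr(FF^*)-2$, but you never say which computation produces this specific additive combination. The natural output of the construction you describe is a state on the reduced character algebra sending $\chi_\pi$ to $\dim(\pi)^2/\dim_q(\pi)$ (this is precisely what the paper derives, via Connes's injectivity criterion and the adjoint action $\Ad(\chi_\pi)=\chi_{\pi,(1)}\,J\hat J\,\chi_{\pi,(2)}\,\hat J J$). Evaluating this state on $\chi_1$ alone and using $|\omega(\chi_1)|\le\|\chi_1\|$ gives only
\[
\frac{N^2}{\Tr(FF^*)}\le 2,\qquad\text{i.e.}\qquad N^2\le 2\,\Tr(FF^*),
\]
which is strictly weaker than $N^2\le\Tr(FF^*)+2$ whenever $\Tr(FF^*)>2$. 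No finite-level estimate on $\chi_1$ or $\chi_1^2$ yields the sharp threshold; nothing in your outline explains where the ``$-2$'' would come from.

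The paper obtains the sharp bound by an asymptotic argument: since $\chi_d=\mu_d(\chi_1)$ has norm $\mu_d(2)=d+1$, boundedness of the state forces
\[
\frac{\mu_d(N)^2}{\mu_d(\Tr(FF^*))}\le d+1\quad\text{for all }d,
\]
and the explicit formula $\mu_d(q+q^{-1})=(q^{d+1}-q^{-(d+1)})/(q-q^{-1})$ shows the left side grows like $(q/q_N^2)^d$, with $q+q^{-1}=\Tr(FF^*)$ and $q_N+q_N^{-1}=N$. This stays bounded only when $q_N^2\le q$, equivalently $\Tr(FF^*)\ge q_N^2+q_N^{-2}=N^2-2$. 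So the content of the proposition lives in the large-$d$ asymptotics, not in a single Kesten inequality at the fundamental level. Your sketch needs to incorporate this passage to the limit $d\to\infty$ (or an equivalent spectral-radius argument on the character algebra) in order to reach the claimed threshold.
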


\begin{proof}
By Connes's theorem~\cite{Con1}, a von Neumann algebra $M$ on a Hilbert space $\Hsp$ is injective if and only if the map
\[
M \otimes_{\mathrm{alg}} M' \rightarrow B(\Hsp), \quad x \otimes y \mapsto x y
\]
is continuous with respect to the injective norm.

In general, when $\G$ is a compact quantum group, there is a natural isomorphism from $L^\infty(\G)$ to $L^\infty(\G)'$ given by $x \mapsto J \hat{J} x \hat{J} J$.  Here, $J$ is the
modular conjugation of $L^\infty(\G)$ and $\hat{J}$ is the one for $L(\G)$.  These operators can be determined by the formulas
\begin{align*}
J \Lambda_\varphi (x) &= f_{\frac{1}{2}}(x_{(1)}^*) f_{\frac{1}{2}}(x_{(3)}^*) \Lambda_\varphi (x_{(2)}^*),&
\hat{J} \Lambda_\varphi(x) &= f_{-\frac{1}{2}}(x^*_{(1)}) f_{\frac{1}{2}}(x^*_{(3)}) \Lambda_\varphi (S(x^*_{(2)})) \quad (x \in \Pol(\G)).
\end{align*}

When $x \in \Pol(\G)$, we can consider the operator $\Ad(x) = x_{(1)} J \hat{J} x_{(2)} \hat{J} J$ on $L^2(\G, \varphi)$.  Using the above formulas, if $(u^{(\pi)}_{i j})$ is an irreducible
representation of $\G$ presented as in Section~\ref{sec:prelim}, we see that its character $\chi_\pi = \sum_i u_{ii}^{(\pi)}$ satisfies
\[
\Ad(\chi_\pi)\Lambda_\varphi(1) = \sum_{i, j} Q_{\pi, j}^{-1} \Lambda_{\varphi}\bigl(u^{(\pi)}_{i j} u^{(\pi) *}_{i j}\bigr).
\]
If we take the inner product of this vector with $\Lambda_\varphi(1)$, we obtain $\dim(\pi)^2 / \dim_q(\pi)$. Hence if $L^{\infty}(\G)$ is injective, we have the state \begin{equation}\label{Eq:Character-State} \chi_{\pi}\mapsto \frac{\dim(\pi)^2}{\dim_q(\pi)}\end{equation} on the closure of the character algebra inside $C_r(\G)$.

Now, consider the case of $\G = \Op_F$ with the defining representation $\pi=\pi_{1/2}$.  By Banica's results~\cite{Ban5}*{Proposition~1 and Theorem~1}, we know that the associated character $\chi_{1/2}$ has
norm $2$ in $C_r(\G)$. Hence, with $\mu_d$ denoting again the dilated Chebyshev polynomials, $\chi_{d/2} = \mu_d(\chi_{1/2})$ has norm $\mu_d(2)=d+1$. On the other hand, \[\dim(\pi_{d/2}) = \mu_d(N) \qquad \textrm{ and }\qquad\dim_q(\pi_{d/2})=\mu_d(\Tr(F^*F)),\] by the normalization condition on $F$. From the asymptotic behavior \eqref{Eq:Chebyshev-growth} of the Chebyshev polynomials, $\frac{\dim(\pi_{d/2})^2}{\dim_q(\pi_{d/2})}$ has the same asymptotics as $(q/q_N^2)^d$, where $q+q^{-1} = \Tr(F^*F)$ and $q_N+q_N^{-1} = N$ with $0<q_N,q<1$.  We deduce that the map \eqref{Eq:Character-State} cannot be bounded on the reduced character algebra if we have $q_N^2 > q$. Since the latter condition is equivalent to $q + q^{-1} < q_N^2 + q_N^{-2} = N^2 - 2$, we conclude that $\vN^{\infty}(\G)$ cannot be injective if one has $N^2 > \Tr(F^*F)+2$.
\end{proof}

\begin{Rem}
We remark that the above consideration is based on the argument of~\cite{BMT1}*{Theorem~4.5}, which says that the central functional $\omega(u^{(\pi)}_{i j}) = \delta_{i j} \dim(\pi)
/ \dim_q(\pi)$ defines a state on $C_r(\G)$ if $L^\infty(\G)$ is injective.  For $\G = \SU_q(2)$, this is precisely the state $\psi_0$ which appeared in Section~\ref{sec:Aof-approx}.
\end{Rem}

\section{Central states and the Drinfel'd double}\label{SecDrin}

The centrality of the states in the proof of Proposition~\ref{Prop:ct-pos-state-SUq2} has a more conceptual explanation. As explained in~\cite{Voi1}*{Section 4}, the
representations $\omega_t$ of $\Pol(\SU_q(2))$ extend to representations of the \emph{Drinfel'd double} of $\SU_q(2)$. The aim of this section is to clarify the correspondence between central
states on a compact quantum group and representations of its quantum double in the general setting.

Let $\G$ be a compact quantum group.  Recall that the linear dual  $\Pol(\G)^*$ of $\Pol(\G)$ becomes a $*$-algebra by~\eqref{eq:dual-alg-structure}. We consider a subalgebra $c_{c}(\widehat{\G})$ of $\Pol(\G)^{*}$ given by all
linear
functionals of the form
\[
y \rhd \varphi\colon x \mapsto \varphi(x y) \in \Pol(\G).
\]
It forms a (non-unital) associative $*$-subalgebra of the space $\Pol(\G)^{*}$. This $*$-algebra is isomorphic to a direct sum of finite-dimensional matrix algebras. In particular,
$\varphi$ becomes a minimal self-adjoint central projection, with $\varphi\omega = \omega(1)\varphi = \omega\varphi$.

The natural pairing between $\Pol(\G)$ and $c_{c}(\widehat{\G})$ leads to the notion of the \emph{Drinfel'd double}. This is a (non-unital, associative) $*$-algebra structure
$\Pol_{c}(\dDd(\G))$ with the underlying vector space $\Pol(\G) \otimes c_{c}(\widehat{\G})$, such that the natural embeddings of $\Pol(\G)$ and $c_{c}(\widehat{\G})$ become $*$-subalgebras
(of its multiplier algebra),
and such that following \emph{interchange law} is satisfied:
\[
\omega(\ccdot x_{(2)})x_{(1)} = x_{(2)}\omega(x_{(1)}\ccdot)
\]
where we denote elementary tensors as $x\omega$. Note that the above interchange law implies the relation
\[
\omega x = x_{(2)}\omega(x_{(1)}\ccdot S(x_{(3)})).
\]

The above $*$-algebra can in fact be made into a \emph{$*$-algebraic quantum group}~\cite{Dra1} by means of the tensor coproduct. It therefore admits a universal C$^*$-envelope
$C_{0}^{u}(\dDd(\G))$~\cite{Kus1} (one can prove this more directly for this particular case). Symbolically, the locally compact quantum group $\Dd(\G)$ such that
$\Pol_{c}(\dDd(\G))$ is the convolution algebra of $\Dd(\G)$ is called the \emph{Drinfel'd double} of $\G$. The $\Dd(\G)$-modules (that is, the $\Pol_{c}(\dDd(\G))$-modules) are
equivalent to the $\G$-Yetter--Drinfeld modules~\cite{Maj1}.

For every linear functional $\omega$ on $\Pol(\G)$, we define a linear functional $\widetilde{\omega}$ on $\Pol_{c}(\dDd(\G))$ by $\widetilde{\omega}(x \theta) = \theta(1)
\omega(x)$.
Then, $\omega \mapsto \widetilde{\omega}$ defines an embedding of $\Pol(\G)^{*}$ into $\Pol_{c}(\dDd(\G))^{*}$, which we denote by $\Ind$. Note that the image of $\Ind$ can be
characterized as the set of elements $\widetilde{\omega}$ such that $\widetilde{\omega}(a) = \widetilde{\omega}(a\varphi)$ for all $a\in \Pol_{c}(\dDd(\G))$.

\begin{Theorem}\label{TheoPrin}
A unital linear functional $\omega$ on $\Pol(\G)$ is a central state if and only if $\widetilde{\omega} = \Ind(\omega)$ is positive on $\Pol_{c}(\dDd(\G))$.
\end{Theorem}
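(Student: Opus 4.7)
The plan is to use the identity
\[
\sum \omega(x_{(2)})\, x_{(1)} S(x_{(3)}) = \omega(x)\cdot 1 \qquad (\star)
\]
in $\Pol(\G)$ as a bridge, having first verified that $(\star)$ (for all $x \in \Pol(\G)$) is equivalent to centrality of a unital functional $\omega$. For the forward direction, centrality applied to $\Delta(x_{(2)}) = x_{(2)}\otimes x_{(3)}$ (in three-fold Sweedler notation) gives $\omega(x_{(2)}) S(x_{(3)}) = \omega(x_{(3)}) S(x_{(2)})$, after which the antipode identity $\sum x_{(1)} S(x_{(2)})\otimes x_{(3)} = 1\otimes x$ collapses the expression to $\omega(x)\cdot 1$. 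For the reverse direction, I evaluate $(\star)$ at $u_{ij}^{(\pi)}$ for an irreducible $\pi$, use $S(u_{lj}^{(\pi)}) = u_{jl}^{(\pi)*}$, and then multiply on the right by $u_{jm}^{(\pi)}$ and sum over $j$ using unitarity to obtain the identity $UA = AU$ in $M_{\dim\pi}(\Pol(\G))$, where $A_{ij} = \omega(u_{ij}^{(\pi)})$. Linear independence of the matrix coefficients then forces $A$ to be a scalar multiple of the identity, which by~\eqref{EqId} is precisely the centrality of $\omega$.

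For the direction $(\Rightarrow)$, assume $\omega$ is a central state. Taking an arbitrary $a = \sum_i x_i \theta_i \in \Pol_c(\dDd(\G))$, I bring each summand of $a^*a = \sum_{i,j}\theta_i^* x_i^* x_j \theta_j$ into canonical form using the Drinfel'd relation $\omega x = x_{(2)}\omega(x_{(1)}\cdot S(x_{(3)}))$ applied with the left-most $c_c(\widehat{\G})$-factor $\theta_i^*$. Applying $\widetilde{\omega}$ and using $(\eta\theta)(1) = \eta(1)\theta(1)$ extracts the factors $\theta_j(1)$ and $\theta_i^*(1) = \overline{\theta_i(1)}$, leaving $\omega$ evaluated at precisely the left-hand side of $(\star)$ for $y_{ij} = x_i^* x_j$. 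By $(\star)$ this collapses to $\omega(x_i^* x_j)$, so with $\lambda_i = \theta_i(1) \in \C$ I obtain
\[
\widetilde{\omega}(a^*a) = \sum_{i,j}\overline{\lambda_i}\lambda_j\, \omega(x_i^* x_j) = \omega(z^*z) \geq 0, \qquad z = \sum_i \lambda_i x_i \in \Pol(\G),
\]
the inequality being positivity of $\omega$ as a state on $\Pol(\G)$.

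For the direction $(\Leftarrow)$, apply GNS to $\widetilde{\omega}$ to obtain a nondegenerate $*$-representation $\pi$ of $\Pol_c(\dDd(\G))$ on a Hilbert space $H$ with cyclic vector $\xi$. The image characterization $\widetilde{\omega}(a) = \widetilde{\omega}(a\varphi)$, combined with cyclicity of $\xi$, forces $\pi(\varphi)\xi = \xi$; since $\theta\varphi = \theta(1)\varphi$ in $c_c(\widehat{\G})$, this upgrades to $\pi(\theta)\xi = \theta(1)\xi$ for every $\theta \in c_c(\widehat{\G})$. In particular $\omega(x) = \widetilde{\omega}(x\varphi) = \langle \pi(x)\xi,\xi\rangle$ is a state on $\Pol(\G)$. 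To produce $(\star)$, I compute $\langle \pi(\phi)\pi(x)\xi,\xi\rangle$ for $\phi \in c_c(\widehat{\G})$ in two ways: on one hand, passing $\pi(\phi)^*$ onto $\xi$ and using $\phi^*(1) = \overline{\phi(1)}$ gives $\phi(1)\omega(x)$; on the other, expanding $\phi x$ via the Drinfel'd relation and then using $\pi(\theta)\xi = \theta(1)\xi$ yields $\sum \phi(x_{(1)} S(x_{(3)}))\omega(x_{(2)})$. Equating these for all $\phi$ and invoking that $c_c(\widehat{\G}) = \{y\triangleright\varphi : y \in \Pol(\G)\}$ separates points of $\Pol(\G)$ via faithfulness of the Haar state yields $(\star)$, whence centrality by the first paragraph.

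The main obstacle I anticipate is the direction $(\star)\Rightarrow\text{centrality}$: this is the only step requiring representation-theoretic input beyond formal Hopf-algebraic manipulations, since converting the single scalar identity $(\star)$ back into the full commutation condition $\omega\phi = \phi\omega$ relies on a Schur-type argument using the linear independence of matrix coefficients of irreducible corepresentations to conclude $A \in \C \cdot I$.
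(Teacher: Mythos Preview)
Your proof is correct. The forward direction is essentially identical to the paper's: both reduce $\widetilde{\omega}(\theta_i^* x_i^* x_j \theta_j)$ to $\overline{\theta_i(1)}\theta_j(1)\,\omega(x_i^*x_j)$ via the identity $(\star)$, which the paper invokes implicitly under the phrase ``by centrality''.

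The backward direction differs. The paper argues purely algebraically: positivity plus $\Pol_c(\dDd(\G))^2 = \Pol_c(\dDd(\G))$ gives Hermitianity of $\widetilde{\omega}$, whence $\widetilde{\omega}(\psi x) = \psi(1)\omega(x)$; then the \emph{simpler} form of the interchange law $\theta(\,\cdot\, x_{(2)})x_{(1)} = x_{(2)}\theta(x_{(1)}\,\cdot\,)$ gives $(\theta\omega)(x) = (\omega\theta)(x)$ directly, and nondegeneracy of the pairing finishes. Your route instead passes through the GNS representation, establishes the $c_c(\widehat{\G})$-invariance of the cyclic vector, and then uses the \emph{full} Drinfel'd relation $\phi x = x_{(2)}\phi(x_{(1)}\,\cdot\, S(x_{(3)}))$ to recover $(\star)$, after which you still need the Schur-type argument to convert $(\star)$ back to centrality. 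The paper's path is shorter precisely because it avoids the detour through $(\star)$ in this direction. On the other hand, your GNS argument makes explicit the representation-theoretic content (a $\varphi$-fixed vector in a $*$-representation of the double) that the paper only mentions as a remark after the proof.

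One small point to tighten: in the non-unital algebra $\Pol_c(\dDd(\G))$, the existence of a cyclic vector $\xi$ with $\widetilde{\omega}(a) = \langle\pi(a)\xi,\xi\rangle$ is not automatic from GNS. It works here because $\varphi$ is a self-adjoint idempotent with $\widetilde{\omega}(a\varphi) = \widetilde{\omega}(a)$ (and, via Hermitianity, also $\widetilde{\omega}(\varphi a) = \widetilde{\omega}(a)$), so one may take $\xi = [\varphi]$; you should say so rather than invoke GNS as a black box.
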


\begin{proof}
Let $\omega$ be a central state on $\Pol(\G)$. Then, for every $y\in \Pol(\G)$ and $\theta,\gamma \in c_{c}(\widehat{\G})$, we compute
\begin{equation*}
\widetilde{\omega}(\theta^{*} y \gamma) = \widetilde{\omega}(y_{(2)}\theta^{*}(y_{(1)}\ccdot S(y_{(3)}))\gamma) = \omega(y_{(2)})\theta^*(y_{(1)}S(y_{(3)}))\gamma(1).
\end{equation*}
By centrality of $\omega$, the right-hand side is equal to $\overline{\theta(1)}\gamma(1)$. Hence, we get
\begin{eqnarray*}
\widetilde{\omega}\Bigl(\Bigl(\sum_{i} x_{i}\theta_{i}\Bigr)^{*}\Bigl(\sum_{i} x_{i}\theta_{i}\Bigr)\Bigr) & = & \sum_{i,j}
\omega(x_{i}^{*}x_{j})\overline{\theta_{i}(1)}\theta_{j}(1) \\
& = & \omega\Bigl(\Bigl(\sum_{i}\theta_{i}(1)x_{i}\Bigr)^{*}\Bigl(\sum_{i}\theta_{i}(1)x_{i}\Bigr)\Bigr),
\end{eqnarray*}
which is positive by assumption on $\omega$.

Conversely, assume that $\omega$ is a unital linear functional such that $\Ind(\omega) = \widetilde{\omega}$ is positive. As noticed already, $\widetilde{\omega}(a) =
\widetilde{\omega}(a\varphi)$ for all $a \in \Pol_{c}(\dDd(\G))$. As $\Pol_{c}(\dDd(\G))^{2} = \Pol_{c}(\dDd(\G))$ and $\widetilde{\omega}$ is positive, we see that
$\widetilde{\omega}$ is Hermitian. Hence, $\widetilde{\omega}(\varphi a) = \widetilde{\omega}(a)$ for all $a\in \Pol_{c}(\dDd(\G))$. In particular, $\widetilde{\omega}(\varphi x) =
\omega(x)$ for all $x\in \Pol(\G)$. As $\omega(x^{*}x) = \widetilde{\omega}(\varphi x^{*}x\varphi)$ for $x\in \Pol(\G)$, we see that $\omega$ is positive.

Let us show the centrality of $\omega$. Using the Hermitian property, one has
\[
\widetilde{\omega}(\psi x) = \overline{\widetilde{\omega}(x^{*} \psi^{*})} = \psi(1) \omega(x) \quad x \in \Pol(\G).
\]
Combining this with the interchange law, we obtain
\begin{equation*}
(\theta\omega)(x) = \widetilde{\omega}(\theta(\ccdot x_{(2)})x_{(1)}) = \widetilde{\omega}(x_{(2)}\theta(x_{(1)}\ccdot)) = (\omega\theta)(x).
\end{equation*}
As the pairing between $\Pol(\G)$ and $c_{c}(\widehat{\G})$ is non-degenerate, it follows that $\omega$ is central.
\end{proof}

It is not difficult to show that the $*$-representation of $\Pol_{c}(\dDd(\G))$ in its GNS-representation with respect to any positive functional is necessarily bounded. It hence
follows that the problem of finding central states on $\Pol(\G)$ is equivalent to the problem of finding $*$-representation of $C_{0}^{u}(\dDd(\G))$ admitting a $\varphi$-fixed
vector.

\begin{Rem} The C$^*$-algebra $C_0^u(\dDd(\SU_{q}(2)))$ may be interpreted as the universal enveloping C$^*$-algebra of the quantum Lorentz group $\SL_{q}(2, \C)$ \cite{Pod1}. The
representations of Voigt which appear in Proposition~\ref{Prop:ct-pos-state-SUq2} could thus be interpreted as representations of $\SL_{q}(2, \C)$, which are analogues of the
complementary series representations of $\SL(2,\C)$. To the best of our knowledge, the full representation theory of $\SL_{q}(2, \C)$ is not completely understood
yet.
\end{Rem}

\begin{Rem}
There is no analogue of Theorem \ref{TheoPrin} for \emph{non-positive}, bounded central functionals on $\Pol(\G)$ (with respect to the universal C$^*$-norm). Indeed, one can show that
there is a $*$-isomorphic copy of the character algebra of $\G$ inside $\Pol(\dDd(\G))$, given by sending $\chi_{\pi} = \sum_r u_{rr}^{(\pi)}$ to $\sigma_{-i/2}(\chi_{\pi})\varphi$,
where $\sigma_{-i/2}(u_{rs}^{\pi}) = Q_{\pi,r}^{1/2}Q_{\pi,s}^{1/2}u_{rs}^{(\pi)}$. For the case of $\G = \SU_q(2)$, combining the results of this section with the ones from
section \ref{sec:Aof-approx}, one can then show that the resulting C$^*$-algebra closure of the character algebra inside $\Pol(\dDd(\G))$ is isomorphic to $C(-|q+q^{-1}|,|q+q^{-1}|)$.
It follows from this that the functional $\theta_z$ of Theorem \ref{Thm:Psi-z-slice-by-bdd-psi-z} does \emph{not} extend to a bounded functional on $C_0^u(\dDd(\SU_q(2)))$ when $z$ is
not real. This implies that a bounded central functional on $\Pol(\G)$ (with respect to the universal norm) does not always decompose into a linear combination of central
\emph{states}.
\end{Rem}

\appendix
\renewcommand{\thesection}{A.\arabic{section}}
\setcounter{section}{0}

\section*{\center{Appendix}\\ Fullness and factoriality for the free unitary quantum group von Neumann algebras}

\begin{center}by {\large Stefaan Vaes}\end{center}

\bigskip

In the following, $F$ is an arbitrary invertible complex matrix.  Using~\cite{Ban6}, the equivalence classes of irreducible objects of $\Rep(\Up_F)$ can be identified with the free semigroup\linebreak $\N \star \N$ generated by $\alpha$ and $\beta$, respectively corresponding to the generating corepresentation and to its contragredient.  The duality on $\Rep(\Up_F)$ is encoded as the antimultiplicative involution $w \mapsto \bar{w}$ on $\N \star \N$ induced by $\bar{\alpha} = \beta$.  The fusion rules of $\Rep(\Up_F)$ are given by
\[
w \cdot v = \sum_{x y = w, \bar{y} z = v} x z,
\]
where $x y$ denotes the usual concatenation product of $x$ and $y$ in $\N \star \N$.

We consider the GNS Hilbert space $L^2(\Up_F,\varphi)$ with respect to the Haar state $\varphi$ on $L(\FU_F)$ and we view $L(\FU_F)$ as a vector subspace of $L^2(\Up_F,\varphi)$. So we have $\langle a,b\rangle = \varphi(b^* a)$ for all $a,b \in L(\FU_F)$.

For every $x \in \N \star \N$, we fix a corepresentation $U_x \in B(H_x) \otimes L(\FU_F)$ representing $x$. Using the orthogonality relations \eqref{eq:inn-prod}, we find positive invertible matrices $Q_x \in B(H_x)$ such that $\mathrm{Tr}(Q_x) = \dim_q(x) = \mathrm{Tr}(Q_x^{-1})$ and such that the corresponding state $\omega_x$ on $B(H_x)$ given by $\omega_x(T) = \frac{1}{\dim_q(x)}\mathrm{Tr}(TQ^{-1}_x)$ satisfies
$$(\iota \otimes \varphi) (U_x^* (T \otimes 1) U_x) = \omega_x(T) 1 \quad\text{for all}\;\; T \in B(H_x) \; .$$
The modular automorphism group $(\sigma_t^\varphi)_{t \in \R}$ of $\varphi$ is determined by
$$(\id\otimes \sigma_t^{\varphi})(U_x) = (Q_x^{it}\otimes 1)U_x(Q_x^{it}\otimes 1) \; .$$
It then follows that
\begin{equation}\label{eq:invariance-U}
(\omega_x \otimes \varphi)(U_x(1 \otimes a)U_x^*) = \varphi(a) \quad\text{for all}\;\; a \in L(\FU_F) \; .
\end{equation}
The scalar product $\langle T,S \rangle = \omega_x(S^*T)$ turns $B(H_x)$ into a Hilbert space that we denote as $K_x$. For every $t \in \R$, we define
\[
V_x^t\colon L^2(\Up_F, \varphi) \rightarrow K_x \otimes L^2(\Up_F, \varphi) : V_x^t a \mapsto ( \iota \otimes \sigma^\varphi_t)(U_x) (1 \otimes a) U_x^* \; .
\]
Using \eqref{eq:invariance-U}, it follows that $V_x^t$ is an isometry.

The following analogue of Puk\'{a}nszky's $14 \epsilon$-argument is the key to the fullness and factoriality of $L(\FU_F)$ and also provides the computation of the Sd-invariant of $L(\FU_F)$.

\begin{Prop}
\label{prop:14epsilon-arg}
For all $a \in L(\FU_F)$ and $t \in \R$, we have
\[
\norm{a-\varphi(a)1}_{2,\varphi} \le 14 \max \left (\norm{1 \otimes a - V_{\alpha \beta}^t a}, \norm{1 \otimes a - V_{\alpha^2 \beta}^t a}, \norm{1 \otimes a - V_{\beta \alpha}^t a} \right) \; ,
\]
with the norms at the right hand side being the Hilbert space norms on $K_x \otimes L^2(\Up_F,\varphi)$.
\end{Prop}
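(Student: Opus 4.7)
The plan is to mimic Puk\'anszky's classical $14\epsilon$ argument for $\F_2$, with the free group replaced by the free fusion semigroup $\N \star \N$ and with conjugation by group elements replaced by the adjoint action $V^t_y$ on isotypic components.

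First I would decompose $L^2(\Up_F,\varphi) = \bigoplus_{x \in \N \star \N} \Hsp_x$, where $\Hsp_x$ is the closed linear span of the matrix coefficients of $U_x$, and write $a = \sum_x a_x$ with $a_x \in \Hsp_x$. Since $\varphi(a) 1$ is exactly the component $a_e$ at the unit word, it suffices to estimate $\sum_{x \ne e} \|a_x\|_{2,\varphi}^2$ by the three given Hilbert space norms. Fixing one of the three choices of $y$, I would analyze $V^t_y a_x = (\id \otimes \sigma^\varphi_t)(U_y)(1 \otimes a_x) U_y^*$ using the fusion rule $w \cdot v = \sum_{xy = w,\, \bar y z = v} xz$: the vector $V^t_y a_x$ lives in $K_y \otimes \bigoplus_{z \in y \cdot x \cdot \bar y} \Hsp_z$. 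Projecting onto $K_y \otimes \Hsp_x$ yields the \emph{resonant} part, whose inner product with $1 \otimes a_x$ can be explicitly computed from the fusion decomposition and the twist by $Q_y^{it}$; the remaining \emph{off-resonant} part is orthogonal to $1 \otimes a_{x'}$ for every $x' \ne e$ that is not matched by the fusion.

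The next step is the combinatorial heart of the argument. For every nontrivial $x \in \N \star \N$, I would determine which words $z$ occur in $y \cdot x \cdot \bar y$ for each of $y \in \{\alpha\beta,\alpha^2\beta,\beta\alpha\}$, and estimate the multiplicity with which $x$ itself occurs (this controls how large the resonant part can be). The choice of the three elements $\alpha\beta$, $\alpha^2\beta$, $\beta\alpha$ is designed precisely so that for every $x \ne e$ one can find at least one $y$ in the list such that the resonant part of $V^t_y a_x$ in $K_y \otimes \Hsp_x$ is either absent or orthogonal to $1 \otimes a_x$ after accounting for the $Q_y^{it}$ twist. In this case one gets
\[
\|1 \otimes a_x - V^t_y a_x\|^2 \ge \|1 \otimes a_x\|^2 + \|(\text{resonant part})\|^2 \ge \|a_x\|_{2,\varphi}^2.
\]
Splitting $\N \star \N \setminus \{e\}$ into at most three classes according to which $y$ is good for each $x$, and using the orthogonality of distinct isotypic components $\Hsp_z$ to sum the contributions without cross terms (up to controlled overlaps from the various $z$-components of $V^t_y a_{x'}$ for different $x'$), I would end up with an estimate of the form $\|a - \varphi(a)1\|_{2,\varphi} \le C \max_y \|1 \otimes a - V^t_y a\|$ with a universal constant $C$. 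Tracking the constants through the three choices of $y$, the triangle inequality between resonant and off-resonant contributions, and the cross terms coming from words $x'$ whose fusion with $y \bar y$ produces several common $z$'s, should give exactly the bound $C = 14$ by Puk\'anszky's accounting.

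The main obstacle will be the combinatorial case analysis of $y \cdot x \cdot \bar y$: in contrast to the group situation where $gwg^{-1}$ is a single word, the fusion $y \cdot x \cdot \bar y$ is a sum and one must control the resonant multiplicity of $x$ in this sum uniformly. A secondary difficulty is that the twist by $Q_y^{it}$ and $\sigma^\varphi_t$ introduces non-self-adjoint coefficients, so the inner product $\langle 1 \otimes a_x, V^t_y a_x \rangle$ is not obviously real; one must check that the choice of three $y$'s still yields a purely off-resonant inequality independently of $t$. Once the combinatorics is set up correctly, the rest is a bookkeeping exercise carried out exactly as in Puk\'anszky's original proof.
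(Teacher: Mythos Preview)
Your proposal has a genuine gap: it is built on the fine spectral decomposition $L^2 = \bigoplus_x \Hsp_x$ and on the hope that for each word $x$ one of the three $y$'s makes the ``resonant'' piece of $V^t_y a_x$ vanish. But $V^t_y$ does not act diagonally on this decomposition, so the $K_y \otimes \Hsp_x$-component of $1\otimes a - V^t_y a$ receives contributions from \emph{all} $a_{x'}$ with $x \in y\cdot x'\cdot\bar y$, and there is no mechanism to bound these cross terms; calling this ``bookkeeping'' hides the entire difficulty. Moreover, even granting a pointwise inequality $\|1\otimes a_x - V^t_y a_x\|\ge \|a_x\|$ for each $x$ in its class, you cannot sum these over $x$ to get a lower bound on $\|1\otimes a - V^t_y a\|$, again because of the off-diagonal terms. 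This is also \emph{not} how Puk\'anszky's argument runs: he never works word by word.

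What the paper (and Puk\'anszky) actually does is use the \emph{coarse} decomposition $L^2 = \C 1 \oplus H_\alpha \oplus H_\beta$, where $H_\gamma$ is the span of all $C_{\gamma w}$, i.e.\ one only looks at the first letter. The fusion rules then give clean containments: $V^t_{\beta\alpha}(H_\alpha)\subset K_{\beta\alpha}\otimes H_\beta$, which yields $\|a_\beta\|^2 \ge \|a_\alpha\|^2 - 2M\|a-\varphi(a)1\|$; and, placing $V^t_{\alpha\beta}$ and $V^t_{\alpha^2\beta}$ on different tensor legs, the images $V^t_{\alpha\beta}(H_\beta)$ and $V^t_{\alpha^2\beta}(H_\beta)$ are \emph{orthogonal}, which yields $\|a_\alpha\|^2 \ge 2\|a_\beta\|^2 - 4M\|a-\varphi(a)1\|$. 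Combining the two gives $\|a_\alpha\|^2 \ge 2\|a_\alpha\|^2 - 8M\|a-\varphi(a)1\|$ and similarly for $a_\beta$, hence $\|a-\varphi(a)1\|^2 \le 14M\|a-\varphi(a)1\|$. The doubling step (two orthogonal copies of $a_\beta$ inside $a_\alpha$) is the heart of the $14\epsilon$ argument and is entirely absent from your outline; without it no finite constant, let alone $14$, emerges.
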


\begin{proof}
For every $x \in \N \star \N$, we denote by $C_x \subset L(\FU_F)$ the linear span of all the matrix coefficients of $U_x$. For every $v \in \N \star \N$, we then define $H_v$ as the closed linear span of all $C_{vw}$, $w \in \N \star \N$. By construction,
\begin{equation}\label{eq.image}
V^t_x C_y \subset \operatorname{span} \{ K_x \ot C_z \mid z \in \N \star \N \;\;\text{appears in}\;\; x \cdot y \cdot \overline{x}\} \; .
\end{equation}
It follows that
$V^t_{\be \al}(H_\al) \subset K_{\be\al} \ot H_\beta$.

Fix $a \in L(\FU_F)$ and $t \in \R$. Decompose $a$ in the orthogonal decomposition $L^2(\Up_F,\varphi) = \C 1 \oplus H_\al \oplus H_\be$ as
$$a = a_\eps + a_\al + a_\be \quad\text{where}\;\; a_\eps = \vphi(a)1 , a_\al \in H_\al , a_\be \in H_\be \; .$$
Write
$$M = \max \left (\norm{1 \otimes a - V_{\alpha \beta}^t a}, \norm{1 \otimes a - V_{\alpha^2 \beta}^t a}, \norm{1 \otimes a - V_{\beta \alpha}^t a} \right) \; .$$
Observe that
\begin{align}
& |\langle 1 \ot a_\be - V^t_{\be\al} a_\al , V^t_{\be \al} a_\al \rangle| \notag\\
\leq \; & |\langle 1 \ot a - V^t_{\be\al} a , V^t_{\be \al} a_\al \rangle| + |\langle 1 \ot (a_\eps + a_\al) - V^t_{\be\al} (a_\eps + a_\be), V^t_{\be \al} a_\al \rangle|\label{eq.myequ}
\end{align}
Because $V^t_{\be\al}(H_\al) \subset K_{\be\al} \ot H_\be$, we get that $1 \ot (a_\eps + a_\al)$ is orthogonal to $V^t_{\be \al} a_\al$. Since $V^t_{\be\al}$ is an isometry, we also have that $V^t_{\be\al} (a_\eps + a_\be)$ is orthogonal to $V^t_{\be \al} a_\al$. So the last term of \eqref{eq.myequ} is zero and we conclude that
\begin{equation}\label{eq.first-estim}
|\langle 1 \ot a_\be - V^t_{\be\al} a_\al , V^t_{\be \al} a_\al \rangle| \leq M \|a_\al\|_{2,\vphi} \leq M \|a-\vphi(a)1\|_{2,\varphi} \; .
\end{equation}
Writing $1 \ot a_\be$ as the sum of $1 \ot a_\be - V^t_{\be\al} a_\al$ and $V^t_{\be \al} a_\al$, it follows that
\begin{equation}\label{eq.estim-b}
\|a_\be\|_{2,\vphi}^2 \geq \|a_\al\|_{2,\vphi}^2 - 2 M \, \|a-\vphi(a)1\|_{2,\varphi} \; .
\end{equation}

Next, define the isometries $V_1^t, V_2^t\colon L^2(\Up_F, \varphi) \rightarrow K_{\alpha \beta} \otimes K_{\alpha^2 \beta} \otimes L^2(\Up_F, \varphi)$ by setting $V_1^t \xi = (V_{\alpha \beta}^t \xi)_{1 3}$ and $V_2^t \xi = (V_{\alpha^2 \beta}^t \xi)_{2 3}$. It follows from \eqref{eq.image} that $V_1^t(H_\beta)$ and $V_2^t(H_\beta)$ are orthogonal. Using \eqref{eq.first-estim} with the roles of $\al$ and $\be$ interchanged, we then get that
\begin{align*}
|\langle 1 \ot 1 \ot a_\al - V_1^t a_\be - V_2^t a_\be , V_1^t a_\be \rangle| &= |\langle 1 \ot 1 \ot a_\al - V_1^t a_\be , V_1^t a_\be \rangle| \\
&= |\langle 1 \ot a_\al -V_{\al\be}^t a_\be, V_{\al\be}^t a_\be \rangle| \leq M \|a-\vphi(a)1\|_{2,\varphi} \; .
\end{align*}
We similarly have that
$$|\langle 1 \ot 1 \ot a_\al - V_1^t a_\be - V_2^t a_\be , V_2^t a_\be \rangle| \leq M \|a-\vphi(a)1\|_{2,\varphi} \; .$$
Writing $1 \ot 1 \ot a_\al$ as the sum of $V_1^t a_\be$, $V_2^t a_\be$ and $1 \ot 1 \ot a_\al - V_1^t a_\be - V_2^t a_\be$, the previous two estimates, together with the orthogonality of $V_1^t a_\be$ and $V_2^t a_\be$, imply that
\begin{equation}\label{eq.estim-c}
\|a_\al\|_{2,\vphi}^2 \geq 2 \|a_\be\|_{2,\vphi}^2 - 4M \|a-\vphi(a)1\|_{2,\varphi} \; .
\end{equation}
Combining the inequalities in \eqref{eq.estim-b} and \eqref{eq.estim-c}, it follows that
\begin{align*}
& \|a_\al\|_{2,\vphi}^2 \geq 2\|a_\al\|_{2,\vphi}^2 - 8 M \, \|a-\vphi(a)1\|_{2,\varphi} \quad\text{and}\\
& \|a_\be\|_{2,\vphi}^2 \geq 2\|a_\be\|_{2,\vphi}^2 - 6 M \, \|a-\vphi(a)1\|_{2,\varphi} \; .
\end{align*}
Adding up these inequalities and using that $\|a_\al\|_{2,\vphi}^2 + \|a_\be\|_{2,\vphi}^2 = \|a-\vphi(a)1\|^2_{2,\varphi}$, we finally get that
$$\|a-\vphi(a)1\|_{2,\vphi} \leq 14 M \; .$$
\end{proof}

Let $M$ be a von Neumann algebra with separable predual. The group $\Aut M$ of automorphisms of $M$ is a Polish group, with $\al_n \rightarrow \al$ if and only if $\|\om \al_n - \om \al\| \rightarrow 0$ for every $\omega \in M_*$. When the group of inner automorphisms $\Inn M$ is a closed subgroup of $\Aut M$, the von Neumann algebra $M$ is said to be full. A bounded sequence $(x_n)$ in $M$ is called a central sequence if for every $y \in M$, we have that $x_n y - y x_n \rightarrow 0$ in the strong$^*$-topology. The bounded sequence $(x_n)$ is called a strongly central sequence if $\|\om x_n - x_n \om\| \rightarrow 0$ for every $\om \in M_*$. By \cite[Theorem 3.1]{Con2}, $M$ is full if and only if every strongly central sequence $(x_n)$ in $M$ is trivial, meaning that there exists a bounded sequence $z_n$ in the center of $M$ such that $x_n - z_n \rightarrow 0$ in the strong$^*$-topology.

A normal semifinite faithful (n.s.f.) weight $\psi$ on a von Neumann algebra $M$ is called almost periodic if the modular operator $\Delta_\psi$ has pure point spectrum. A factor $M$ is called almost periodic if it admits an almost periodic n.s.f.\ weight. In that case, the intersection of the point spectra of the modular operators $\Delta_\psi$ of all almost periodic n.s.f.\ weights $\psi$ on $M$ is a subgroup of $\R_+^*$ denoted by $\Sd(M)$~; see \cite[Definition 1.2]{Con2}.

\begin{Theorem}\label{ThmNonInj}
Any central sequence in $L(\FU_F)$ is asymptotically scalar. So the von Neumann algebra $L(\FU_F)$ is a full and non-injective factor.

Denoting by $Q$ the unique positive multiple of $FF^*$ satisfying $\Tr(Q) = \Tr(Q^{-1})$, the $\Sd$-invariant of $L(\FU_F)$ is the subgroup of $\R_+^*$ generated by the eigenvalues of $Q \otimes Q$.
\end{Theorem}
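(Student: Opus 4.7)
The plan is to apply Proposition~\ref{prop:14epsilon-arg} at $t=0$, where $V_x^0 a = U_x(1\otimes a)U_x^*$, and to extract the three assertions in turn. Let $(a_n)$ be a bounded strongly central sequence in $L(\FU_F)$. Using the unitarity $\sum_k u^{(x)}_{ik}u^{(x)*}_{jk} = \delta_{ij}$ of $U_x$, a direct computation gives
\[
\bigl(V_x^0 a_n - 1 \otimes a_n\bigr)_{ij} = \sum_k u^{(x)}_{ik}\bigl[a_n, u^{(x)*}_{jk}\bigr].
\]
Strong centrality implies $\|[a_n,y]\|_{2,\varphi}\to 0$ for each fixed $y\in L(\FU_F)$, so for every fixed $x$ only finitely many commutators appear on the right-hand side and $\|V_x^0 a_n - 1\otimes a_n\|\to 0$. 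Specialising to $x\in\{\alpha\beta,\alpha^2\beta,\beta\alpha\}$ and invoking Proposition~\ref{prop:14epsilon-arg} forces $\|a_n - \varphi(a_n)1\|_{2,\varphi}\to 0$, so every strongly central sequence is asymptotically scalar. By \cite[Theorem~3.1]{Con2} this means $L(\FU_F)$ is full; in particular it is a factor.

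To deduce noninjectivity, I invoke the fact that $L(\FU_F)$ is known to be of type II$_1$ in the Kac case and of type III otherwise, and so is never of type I (cf.~\cite{Vae1, Ban6}). All injective factors on separable Hilbert space outside of type I are hyperfinite or one of the Connes--Haagerup type III variants, and each of these admits nontrivial central sequences and hence fails to be full. Combined with the fullness established above, this forces $L(\FU_F)$ to be non-injective.

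For the $\Sd$-invariant, the modular group formula $(\id\otimes\sigma^\varphi_t)(U_x) = (Q_x^{it}\otimes 1)U_x(Q_x^{it}\otimes 1)$ shows that each matrix coefficient $u^{(x)}_{ij}$ is an eigenvector of $\Delta_\varphi$ with eigenvalue $q^{(x)}_i q^{(x)}_j$, where $q^{(x)}_i$ runs over the spectrum of $Q_x$; in particular $\varphi$ is almost periodic. Taking $x=\alpha$ recovers exactly the eigenvalues of $Q\otimes Q$. Conversely, since $Q_\beta$ has eigenvalues $\{q_i^{-1}\}$ and tensor products of corepresentations correspond to tensor products of $Q$-matrices, every $q^{(x)}_i q^{(x)}_j$ for $x\in \N\star\N$ lies in the subgroup $G\subset\R_+^*$ generated by the eigenvalues of $Q\otimes Q$. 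Thus the point spectrum of $\Delta_\varphi$ equals $G$. By Connes' analysis of almost periodic weights on full factors~\cite{Con2}, any two almost periodic n.s.f.\ weights on a full factor share the same modular point spectrum, and this common invariant is $\Sd$. Applied to $\varphi$, this yields $\Sd(L(\FU_F))=G$.

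The main technical obstacle has been concentrated into Proposition~\ref{prop:14epsilon-arg} itself, namely the identification of the three specific words $\alpha\beta,\alpha^2\beta,\beta\alpha$ of $\N\star\N$ that produce a uniform estimate controlling $\|a-\varphi(a)1\|_{2,\varphi}$. Once that input is granted, the reduction from strong centrality to the required norm-smallness of $V_x^0 a_n - 1\otimes a_n$ is elementary, and the remaining ingredients---the classification of injective factors and Connes's theory of full factors with almost periodic weights---are off-the-shelf.
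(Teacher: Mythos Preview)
Your treatment of fullness is essentially the paper's argument: apply Proposition~\ref{prop:14epsilon-arg} at $t=0$ and observe that $V_x^0 a_n - 1\otimes a_n$ is built from commutators $[a_n,u^{(x)*}_{jk}]$. One small omission: to get strong$^*$ convergence of $a_n-\varphi(a_n)1$ to $0$ (rather than just $\|\cdot\|_{2,\varphi}$-convergence), you must also run the argument for $(a_n^*)$, as the paper does. Your non-injectivity paragraph is a correct elaboration of what the paper states in one line.

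The computation of $\Sd$, however, has a real gap. You assert that ``by Connes' analysis \ldots\ any two almost periodic n.s.f.\ weights on a full factor share the same modular point spectrum.'' This is false, and it is not in \cite{Con2}. For a counterexample, take any full II$_1$ factor $M$ (e.g.\ $L(\F_2)$): the trace $\tau$ is almost periodic with point spectrum $\{1\}$, while for any positive $h\in M$ with finite spectrum $\{\lambda_i\}$ the weight $\tau(h\,\cdot\,)$ is almost periodic with point spectrum the group generated by the ratios $\lambda_i/\lambda_j$. What \cite{Con2} does provide is the \emph{existence} of an almost periodic weight $\psi$ realising $\Sd(M)=\Sd(\psi)$ (Theorem~4.7) and a criterion (Proposition~1.1) reducing the inclusion $\Sd(\varphi)\subset\Sd(\psi)$ to the implication ``$\sigma^\psi_{t_n}\to\id \Rightarrow \sigma^\varphi_{t_n}\to\id$''. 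Proving this implication is exactly where the paper uses Proposition~\ref{prop:14epsilon-arg} for \emph{nonzero} $t$: setting $v_n=[D\varphi:D\psi]_{t_n}$, one has $(\Ad v_n^*)\circ\sigma^\varphi_{t_n}=\sigma^\psi_{t_n}\to\id$, hence $\|(\id\otimes\sigma^\varphi_{t_n})(U_x)(1\otimes v_n)U_x^* - 1\otimes v_n\|\to 0$, and the proposition applied with $a=v_n$ and $t=t_n$ forces $v_n$ to be asymptotically scalar, whence $\sigma^\varphi_{t_n}\to\id$. In other words, the parameter $t$ in Proposition~\ref{prop:14epsilon-arg} is there precisely for this step; by using only $t=0$ you have discarded the tool needed to pin down $\Sd$.
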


\begin{proof}
Write $M = L(\FU_F)$ and let $(x_n)$ be a central sequence in $M$. Applying Proposition~\ref{prop:14epsilon-arg} with $t=0$, we conclude that $\|x_n - \vphi(x_n)1\|_{2,\vphi} \rightarrow 0$. Since also $(x_n^*)$ is a central sequence, also $\|x_n^* - \vphi(x_n^*)1\|_{2,\varphi} \rightarrow 0$. Both together imply that $x_n - \vphi(x_n)1$ converges to $0$ in the strong$^*$-topology. So $(x_n)$ is asymptotically scalar. It follows that $M$ is a full factor. In particular, $M$ is non-injective.

Denote by $\Lambda \subset \R_+^*$ the subgroup generated by the eigenvalues of $Q \otimes Q$. Since the modular group $(\sigma^\vphi_t)_{t \in \R}$ of the Haar state $\vphi$ is given by
$$(\id \ot \sigma_t^\varphi)(U_x) = (Q_x^{it} \ot 1) U_x (Q_x^{it} \ot 1) \; ,$$
it follows that $\vphi$ is almost periodic and that the point spectrum $\Sd(\vphi)$ of the modular operator of $\vphi$ is contained in $\Lambda$. So $\Sd(M) \subset \Sd(\vphi) \subset \Lambda$.

It remains to prove that $\Lambda \subset \Sd(M)$. Since $\Sd(M)$ is a group, it suffices to prove that $\Sd(\vphi) \subset \Sd(M)$. By \cite[Theorem 4.7]{Con2}, we can take an almost periodic n.s.f.\ weight $\psi$ on $M$ such that $\Sd(M) = \Sd(\psi)$. We apply \cite[Proposition 1.1]{Con2} to the subgroup $\Sd(\psi) = \Sd(M)$ of $\R_+^*$. In order to show that $\Sd(\vphi) \subset \Sd(M)$, it then suffices to prove the following statement~: if $t_n$ is a sequence in $\R$ such that $\sigma_{t_n}^\psi \rightarrow \id$ in $\Aut(M)$, then also $\sigma_{t_n}^\varphi \rightarrow \id$ in $\Aut(M)$. Fix such a sequence $t_n$. Put $v_n = [D\vphi:D\psi]_{t_n}$. Then $v_n$ is a sequence of unitaries in $M$ such that $(\Ad v_n^*) \circ \sigma_{t_n}^\varphi = \sigma_{t_n}^\psi \rightarrow \id$.

We now use the notation of Proposition \ref{prop:14epsilon-arg}. Since convergence in $\Aut(M)$ implies pointwise convergence in $\|\,\cdot\,\|_{2,\varphi}$, it follows that for all $x \in \N \star \N$ and with respect to the Hilbert norm on $K_x \otimes L^2(M,\vphi)$, we have
$$\|(\id \ot \sigma^\vphi_{t_n})(U_x) (1 \ot v_n) U_x^* - 1 \ot v_n \| \rightarrow 0 \; .$$
Applying Proposition \ref{prop:14epsilon-arg} to $a = v_n$, we get that $\|v_n - \vphi(v_n)1\|_{2,\vphi} \rightarrow 0$. This means that $|\vphi(v_n)| \rightarrow 1$. So we find a sequence $\lambda_n \in \C$ with $|\lambda_n| = 1$ for all $n$ and $|\vphi(v_n) - \lambda_n| \rightarrow 0$. It follows that $\|v_n - \lambda_n 1\|_{2,\vphi} \rightarrow 0$. Since also $|\vphi(v_n^*) - \overline{\lambda_n} | = |\vphi(v_n) - \lambda_n | \rightarrow 0$, we also get that $\|v_n^* - \overline{\lambda_n} 1\|_{2,\vphi} \rightarrow 0$. So we have proved that $v_n - \lambda_n 1$ converges to $0$ in the strong$^*$-topology. It follows that $\Ad v_n \rightarrow \id$ in $\Aut(M)$. Since also $(\Ad v_n^*) \circ \sigma^\vphi_{t_n} \rightarrow \id$, we conclude that $\sigma^\vphi_{t_n} \rightarrow \id$ in $\Aut(M)$, and the theorem is proved.
\end{proof}

\raggedright

\begin{bibdiv}
\begin{biblist}

\bib{Ban2}{article}{
      author={Banica, Teodor},
       title={Th\'{e}orie des repr\'{e}sentations du groupe quantique compact
  libre {${\rm O}(n)$}},
        date={1996},
        ISSN={0764-4442},
     journal={C. R. Acad. Sci. Paris S\'{e}r. I Math.},
      volume={322},
      number={3},
       pages={241\ndash 244},
      review={\MR{1378260 (97a:46108)}},
}

\bib{Ban6}{article}{
      author={Banica, Teodor},
       title={Le groupe quantique compact libre {${\rm U}(n)$}},
        date={1997},
        ISSN={0010-3616},
     journal={Comm. Math. Phys.},
      volume={190},
      number={1},
       pages={143\ndash 172},
         url={http://dx.doi.org/10.1007/s002200050237},
         doi={10.1007/s002200050237},
      review={\MR{1484551 (99k:46095)}},
}

\bib{Ban5}{article}{
      author={Banica, Teodor},
       title={Quantum groups and {F}uss-{C}atalan algebras},
        date={2002},
        ISSN={0010-3616},
     journal={Comm. Math. Phys.},
      volume={226},
      number={1},
       pages={221\ndash 232},
         url={http://dx.doi.org/10.1007/s002200200613},
         doi={10.1007/s002200200613},
      review={\MR{1889999 (2002k:46178)}},
}

\bib{BMT1}{article}{
      author={B\'{e}dos, E.},
      author={Murphy, G.~J.},
      author={Tuset, L.},
       title={Amenability and co-amenability of algebraic quantum groups.
  {II}},
        date={2003},
        ISSN={0022-1236},
     journal={J. Funct. Anal.},
      volume={201},
      number={2},
       pages={303\ndash 340},
         url={http://dx.doi.org/10.1016/S0022-1236(03)00021-1},
         doi={10.1016/S0022-1236(03)00021-1},
      review={\MR{1986692 (2004e:46085)}},
}

\bib{BDV1}{article}{
      author={Bichon, Julien},
      author={De~Rijdt, An},
      author={Vaes, Stefaan},
       title={Ergodic coactions with large multiplicity and monoidal
  equivalence of quantum groups},
        date={2006},
        ISSN={0010-3616},
     journal={Comm. Math. Phys.},
      volume={262},
      number={3},
       pages={703\ndash 728},
         url={http://dx.doi.org/10.1007/s00220-005-1442-2},
         doi={10.1007/s00220-005-1442-2},
      review={\MR{2202309 (2007a:46072)}},
}

\bib{Bla1}{article}{
      author={Blanchard, Etienne~F.},
      author={Dykema, Kenneth~J.},
       title={Embeddings of reduced free products of operator algebras},
        date={2001},
        ISSN={0030-8730},
     journal={Pacific J. Math.},
      volume={199},
      number={1},
       pages={1\ndash 19},
         url={http://dx.doi.org/10.2140/pjm.2001.199.1},
         doi={10.2140/pjm.2001.199.1},
      review={\MR{1847144 (2002f:46115)}},
}

\bib{BHR1}{misc}{
      author={Boutonnet, R\'{e}mi},
      author={Houdayer, Cyril},
      author={Raum, Sven},
       title={Amalgamated free product type {III} factors with at most one
  {C}artan subalgebra},
         how={preprint},
        date={2012},
        note={to appear in Compos. Math.}
      eprint={\href{http://arxiv.org/abs/1212.4994}{{\tt arXiv:1212.4994
  [math.OA]}}},
}

\bib{Bra1}{article}{
      author={Brannan, Michael},
       title={Approximation properties for free orthogonal and free unitary
  quantum groups},
        date={2012},
        ISSN={0075-4102},
     journal={J. Reine Angew. Math.},
      volume={672},
       pages={223\ndash 251},
      review={\MR{2995437}},
}

\bib{Bra2}{misc}{
      author={Brannan, Michael},
       title={Reduced operator algebras of trace-preserving
quantum automorphism groups},
		how={preprint},
        date={2012},
      eprint={\href{http://arxiv.org/abs/1202.5020}{{\tt arXiv:1202.5020
  [math.OA]}}}
}

\bib{BrO1}{book}{
      author={Brown, Nathanial~P.},
      author={Ozawa, Narutaka},
       title={{$C^*$}-algebras and finite-dimensional approximations},
      series={Graduate Studies in Mathematics},
   publisher={American Mathematical Society},
     address={Providence, RI},
        date={2008},
      volume={88},
        ISBN={978-0-8218-4381-9; 0-8218-4381-8},
      review={\MR{MR2391387 (2009h:46101)}},
}

\bib{Cip1}{misc}{
      author={Cipriani, Fabio},
      author={Franz, Uwe},
      author={Kula, Anna},
       title={Symmetries of L\'{e}vy processes, their Markov semigroups and
  potential theory on compact quantum groups},
         how={preprint},
        date={2012},
      eprint={\href{http://arxiv.org/abs/1210.6768}{{\tt arXiv:1210.6768
  [math.OA]}}},
}

\bib{Con2}{article}{
      author={Connes, Alain},
       title={Almost periodic states and factors of type {${\rm III}\sb{1}$}},
        date={1974},
     journal={J. Functional Analysis},
      volume={16},
       pages={415\ndash 445},
      review={\MR{MR0358374 (50 \#10840)}},
}

\bib{Con1}{article}{
      author={Connes, Alain},
       title={Classification of injective factors. {C}ases {$\rm II\sb{1},$}
  {$\rm II\sb{\infty },$} {$\rm III\sb{\lambda },$} {$\lambda \not=1$}},
        date={1976},
        ISSN={0003-486X},
     journal={Ann. of Math. (2)},
      volume={104},
      number={1},
       pages={73\ndash 115},
      review={\MR{MR0454659 (56 \#12908)}},
}

\bib{Cow1}{article}{
      author={Cowling, Michael},
      author={Haagerup, Uffe},
       title={Completely bounded multipliers of the {F}ourier algebra of a
  simple {L}ie group of real rank one},
        date={1989},
        ISSN={0020-9910},
     journal={Invent. Math.},
      volume={96},
      number={3},
       pages={507\ndash 549},
         url={http://dx.doi.org/10.1007/BF01393695},
         doi={10.1007/BF01393695},
      review={\MR{996553 (90h:22008)}},
}

\bib{Daw1}{article}{
      author={Daws, Matthew},
       title={Multipliers of locally compact quantum groups via {H}ilbert
  {$C\sp \ast$}-modules},
        date={2011},
        ISSN={0024-6107},
     journal={J. Lond. Math. Soc. (2)},
      volume={84},
      number={2},
       pages={385\ndash 407},
         url={http://dx.doi.org/10.1112/jlms/jdr013},
         doi={10.1112/jlms/jdr013},
      review={\MR{2835336 (2012h:46123)}},
}

\bib{DeC1}{article}{
      author={De~Commer, Kenny},
       title={A note on the von {N}eumann algebra underlying some universal
  compact quantum groups},
        date={2009},
        ISSN={1735-8787},
     journal={Banach J. Math. Anal.},
      volume={3},
      number={2},
       pages={103\ndash 108},
      review={\MR{2545177 (2010i:46091)}},
}

\bib{DeR1}{article}{
      author={De~Rijdt, An},
      author={Vander~Vennet, Nikolas},
       title={Actions of monoidally equivalent compact quantum groups and
  applications to probabilistic boundaries},
        date={2010},
        ISSN={0373-0956},
     journal={Ann. Inst. Fourier (Grenoble)},
      volume={60},
      number={1},
       pages={169\ndash 216},
         url={http://aif.cedram.org/item?id=AIF_2010__60_1_169_0},
      review={\MR{2664313 (2011g:46128)}},
}

\bib{Dra1}{article}{
      author={Drabant, Bernhard},
      author={Van~Daele, Alfons},
       title={Pairing and quantum double of multiplier {H}opf algebras},
        date={2001},
        ISSN={1386-923X},
     journal={Algebr. Represent. Theory},
      volume={4},
      number={2},
       pages={109\ndash 132},
         url={http://dx.doi.org/10.1023/A:1011470032416},
         doi={10.1023/A:1011470032416},
      review={\MR{1834841 (2002d:16048)}},
}

\bib{Fre1}{article}{
      author={Freslon, Amaury},
       title={Examples of weakly amenable discrete quantum groups},
         journal={J. Funct. Anal.},
        date={2013},
	volume={265},
	number={9},
	pages={2164\ndash 2187},
         doi={10.1016/j.jfa.2013.05.037},
      eprint={\href{http://arxiv.org/abs/1207.1470v4}{{\tt arXiv:1207.1470v4
  [math.OA]}}},
}

\bib{Haa1}{article}{
   author={Haagerup, Uffe},
   title={An example of a nonnuclear $C^{\ast} $-algebra, which has the
   metric approximation property},
   journal={Invent. Math.},
   volume={50},
   date={1978/79},
   number={3},
   pages={279 \ndash 293},
   issn={0020-9910},
   review={\MR{520930 (80j:46094)}},
   doi={10.1007/BF01410082},
}

\bib{Haj1}{article}{
      author={Hajac, Piotr~M.},
      author={Matthes, Rainer},
      author={Szyma{\'n}ski, Wojciech},
       title={Quantum real projective space, disc and spheres},
        date={2003},
        ISSN={1386-923X},
     journal={Algebr. Represent. Theory},
      volume={6},
      number={2},
       pages={169\ndash 192},
         url={http://dx.doi.org/10.1023/A:1023288309786},
         doi={10.1023/A:1023288309786},
      review={\MR{1977928 (2004b:46102)}},
}

\bib{Hou1}{article}{
      author={Houdayer, Cyril},
      author={Ricard, \'{E}ric},
       title={Approximation properties and absence of {C}artan subalgebra for
  free {A}raki-{W}oods factors},
        date={2011},
        ISSN={0001-8708},
     journal={Adv. Math.},
      volume={228},
      number={2},
       pages={764\ndash 802},
         url={http://dx.doi.org/10.1016/j.aim.2011.06.010},
         doi={10.1016/j.aim.2011.06.010},
      review={\MR{2822210 (2012f:46116)}},
}

\bib{Iso1}{misc}{
      author={Isono, Yusuke},
       title={Examples of factors which have no {C}artan subalgebras},
         how={preprint},
        date={2012},
      eprint={\href{http://arxiv.org/abs/1209.1728}{{\tt arXiv:1209.1728
  [math.OA]}}},
}

\bib{Iso2}{misc}{
      author={Isono, Yusuke},
       title={On bi-exactness of discrete quantum groups},
         how={preprint},
        date={2013},
      eprint={\href{http://arxiv.org/abs/1308.5103}{{\tt arXiv:1308.5103
  [math.OA]}}},
}

\bib{Koe1}{misc}{
      author={Koekoek, R.},
      author={Swarttouw, R.F.},
      title ={The Askey-scheme of hypergeometric orthogonal polynomials and its $q$-analogue},
      date  ={1998},
     note={Delft University of Technology, Report no. 98-17}
}

\bib{Kra1}{article}{
      author={Kraus, Jon},
      author={Ruan, Zhong-Jin},
       title={Approximation properties for {K}ac algebras},
        date={1999},
        ISSN={0022-2518},
     journal={Indiana Univ. Math. J.},
      volume={48},
      number={2},
       pages={469\ndash 535},
         url={http://dx.doi.org/10.1512/iumj.1999.48.1660},
         doi={10.1512/iumj.1999.48.1660},
      review={\MR{1722805 (2001b:46115)}},
}

\bib{Kus1}{misc}{
      author={Kustermans, Johan},
       title={Universal {C$^*$}-algebraic quantum groups arising from algebraic
  quantum groups},
         how={preprint},
        date={1997},
      eprint={\href{http://arxiv.org/abs/funct-an/9704006}{{\tt
  arXiv:funct-an/9704006 [math.FA]}}},
}

\bib{Maj1}{book}{
      author={Majid, Shahn},
       title={Foundations of quantum group theory},
   publisher={Cambridge University Press},
     address={Cambridge},
        date={1995},
        ISBN={0-521-46032-8},
         url={http://dx.doi.org/10.1017/CBO9780511613104},
         doi={10.1017/CBO9780511613104},
      review={\MR{1381692 (97g:17016)}},
}

\bib{Oza1}{article}{
      author={Ozawa, Narutaka},
      author={Popa, Sorin},
       title={On a class of {${\rm II}_1$} factors with at most one {C}artan
  subalgebra},
        date={2010},
        ISSN={0003-486X},
     journal={Ann. of Math. (2)},
      volume={172},
      number={1},
       pages={713\ndash 749},
         url={http://dx.doi.org/10.4007/annals.2010.172.713},
         doi={10.4007/annals.2010.172.713},
      review={\MR{2680430 (2011j:46101)}},
}

\bib{Pod1}{article}{
      author={Podle{\'s}, P.},
      author={Woronowicz, S.~L.},
       title={Quantum deformation of {L}orentz group},
        date={1990},
        ISSN={0010-3616},
     journal={Comm. Math. Phys.},
      volume={130},
      number={2},
       pages={381\ndash 431},
         url={http://projecteuclid.org/getRecord?id=euclid.cmp/1104200517},
      review={\MR{1059324 (91f:46100)}},
}

\bib{Pop1}{article}{
      author={Popa, Sorin},
      author={Vaes, Stefaan},
       title={Unique {C}artan decomposition for {II$_1$} factors arising from
  arbitrary actions of hyperbolic groups},
        date={2013},
     journal={J. Reine Angew. Math.},
         doi={10.1515/crelle-2012-0104},
        note={to appear in print},
      eprint={\href{http://arxiv.org/abs/1201.2824}{{\tt arXiv:1201.2824 [math.OA]}}}
}

\bib{Pyt1}{article}{
      author={Pytlik, T.},
      author={Szwarc, R.},
       title={An analytic family of uniformly bounded representations of free
  groups},
        date={1986},
        ISSN={0001-5962},
     journal={Acta Math.},
      volume={157},
      number={3-4},
       pages={287\ndash 309},
         url={http://dx.doi.org/10.1007/BF02392596},
         doi={10.1007/BF02392596},
      review={\MR{857676 (88e:22014)}},
}

\bib{Ric1}{article}{
      author={Ricard, \'{E}ric},
      author={Xu, Quanhua},
       title={Khintchine type inequalities for reduced free products and
  applications},
        date={2006},
        ISSN={0075-4102},
     journal={J. Reine Angew. Math.},
      volume={599},
       pages={27\ndash 59},
         url={http://dx.doi.org/10.1515/CRELLE.2006.077},
         doi={10.1515/CRELLE.2006.077},
      review={\MR{2279097 (2009h:46110)}},
}

\bib{Rua1}{article}{
      author={Ruan, Zhong-Jin},
       title={Amenability of {H}opf von {N}eumann algebras and {K}ac algebras},
        date={1996},
        ISSN={0022-1236},
     journal={J. Funct. Anal.},
      volume={139},
      number={2},
       pages={466\ndash 499},
         url={http://dx.doi.org/10.1006/jfan.1996.0093},
         doi={10.1006/jfan.1996.0093},
      review={\MR{1402773 (98e:46077)}},
}

\bib{Ske1}{article}{
   author={Sch\"{u}rmann, Michael},
   author={Skeide, Michael},
   title={Infinitesimal generators on the quantum group ${\rm SU}_q(2)$},
   journal={Infin. Dimens. Anal. Quantum Probab. Relat. Top.},
   volume={1},
   date={1998},
   number={4},
   pages={573 \ndash 598},
   issn={0219-0257},
   review={\MR{1665276 (2000b:81083)}},
   doi={10.1142/S0219025798000314},
}

\bib{Sol1}{article}{
      author={So{\l}tan, Piotr~M.},
       title={Quantum {$\rm SO(3)$} groups and quantum group actions on
  {$M_2$}},
        date={2010},
        ISSN={1661-6952},
     journal={J. Noncommut. Geom.},
      volume={4},
      number={1},
       pages={1\ndash 28},
         url={http://dx.doi.org/10.4171/JNCG/48},
         doi={10.4171/JNCG/48},
      review={\MR{2575388 (2011a:46108)}},
}

\bib{Vae1}{article}{
      author={Vaes, Stefaan},
      author={Vergnioux, Roland},
       title={The boundary of universal discrete quantum groups, exactness, and
  factoriality},
        date={2007},
        ISSN={0012-7094},
     journal={Duke Math. J.},
      volume={140},
      number={1},
       pages={35\ndash 84},
         url={http://dx.doi.org/10.1215/S0012-7094-07-14012-2},
         doi={10.1215/S0012-7094-07-14012-2},
      review={\MR{2355067 (2010a:46166)}},
}

\bib{VDae1}{article}{
      author={Van~Daele, Alfons},
       title={An algebraic framework for group duality},
        date={1998},
        ISSN={0001-8708},
     journal={Adv. Math.},
      volume={140},
      number={2},
       pages={323\ndash 366},
         url={http://dx.doi.org/10.1006/aima.1998.1775},
         doi={10.1006/aima.1998.1775},
      review={\MR{1658585 (2000g:16045)}},
}

\bib{VDae2}{article}{
      author={Van~Daele, Alfons},
      author={Wang, Shuzhou},
       title={Universal quantum groups},
        date={1996},
        ISSN={0129-167X},
     journal={Internat. J. Math.},
      volume={7},
      number={2},
       pages={255\ndash 263},
         url={http://dx.doi.org/10.1142/S0129167X96000153},
         doi={10.1142/S0129167X96000153},
      review={\MR{1382726 (97d:46090)}},
}

\bib{Ver1}{article}{
      author={Vergnioux, Roland},
       title={Orientation of quantum {C}ayley trees and applications},
        date={2005},
        ISSN={0075-4102},
     journal={J. Reine Angew. Math.},
      volume={580},
       pages={101\ndash 138},
         url={http://dx.doi.org/10.1515/crll.2005.2005.580.101},
         doi={10.1515/crll.2005.2005.580.101},
      review={\MR{2130588 (2006f:46062)}},
}

\bib{Voi1}{article}{
      author={Voigt, Christian},
       title={The {B}aum-{C}onnes conjecture for free orthogonal quantum
  groups},
        date={2011},
        ISSN={0001-8708},
     journal={Adv. Math.},
      volume={227},
      number={5},
       pages={1873\ndash 1913},
         url={http://dx.doi.org/10.1016/j.aim.2011.04.008},
         doi={10.1016/j.aim.2011.04.008},
      review={\MR{2803790}},
}

\bib{Wan1}{article}{
   author={Wang, Shuzhou},
   title={Free products of compact quantum groups},
   journal={Comm. Math. Phys.},
   volume={167},
   date={1995},
   number={3},
   pages={671 \ndash 692},
   issn={0010-3616},
   review={\MR{1316765 (95k:46104)}},
}

\bib{Wan2}{article}{
   author={Wang, Shuzhou},
   title={Quantum symmetry groups of finite spaces},
   journal={Comm. Math. Phys.},
   volume={195},
   date={1998},
   number={1},
   pages={195 \ndash 211},
   issn={0010-3616},
   review={\MR{1637425 (99h:58014)}},
   doi={10.1007/s002200050385},
}

\bib{Wan3}{article}{
   author={Wang, Shuzhou},
   title={Structure and isomorphism classification of compact quantum groups
   $A_u(Q)$ and $B_u(Q)$},
   journal={J. Operator Theory},
   volume={48},
   date={2002},
   number={3, suppl.},
   pages={573 \ndash 583},
   issn={0379-4024},
   review={\MR{1962472 (2004b:46083)}},
}

\bib{Wor3}{article}{
      author={Woronowicz, S.~L.},
       title={Twisted {${\rm SU}(2)$} group. {A}n example of a noncommutative
  differential calculus},
        date={1987},
        ISSN={0034-5318},
     journal={Publ. Res. Inst. Math. Sci.},
      volume={23},
      number={1},
       pages={117\ndash 181},
         url={http://dx.doi.org/10.2977/prims/1195176848},
         doi={10.2977/prims/1195176848},
      review={\MR{890482 (88h:46130)}},
}

\bib{Wor1}{article}{
      author={Woronowicz, S.~L.},
       title={Compact matrix pseudogroups},
        date={1987},
        ISSN={0010-3616},
     journal={Comm. Math. Phys.},
      volume={111},
      number={4},
       pages={613\ndash 665},
         url={http://projecteuclid.org/getRecord?id=euclid.cmp/1104159726},
         doi={10.1007/BF01219077},
      review={\MR{901157 (88m:46079)}},
}

\end{biblist}
\end{bibdiv}

\end{document}